\documentclass[a4paper,11pt]{article}

\usepackage{a4wide}
\usepackage{graphicx}
\usepackage{amssymb,amsmath,amsthm,mathrsfs,xfrac,mathtools}
\usepackage{enumerate, enumitem,float}
\usepackage[title]{appendix}
\usepackage{amsfonts}
\usepackage{hyperref}
\usepackage[utf8]{inputenc}
\usepackage{color}
\usepackage{pgf,tikz}
\usetikzlibrary{external}
\usepackage{pgfplots}
\usetikzlibrary{arrows}
\newlength\fwidth

\newtheorem{proposition}{Proposition}[section]
\newtheorem{theorem}[proposition]{Theorem}
\newtheorem{lemma}[proposition]{Lemma}
\newtheorem{definition}[proposition]{Definition}
\newtheorem{corollary}[proposition]{Corollary}
\newtheorem{remark}[proposition]{Remark}
\newtheorem{lgrthm}[proposition]{Algorithm}

\numberwithin{equation}{section}
\setlength{\delimitershortfall}{-0.1pt}
\allowdisplaybreaks[4]

\renewenvironment{proof}{\smallskip\noindent\emph{\textbf{Proof.}}%
  \hspace{1pt}}{\hspace{-5pt}{\nobreak\quad\nobreak\hfill\nobreak%
    $\square$\vspace{2pt}\par}\smallskip\goodbreak}

\newcommand{\C}[1]{\mathbf{C^{#1}}}
\newcommand{\Cc}[1]{\mathbf{C_c^{#1}}}
\renewcommand{\L}[1]{\mathbf{L^#1}}

\newcommand{\BV}{\mathbf{BV}}

\newcommand{\modulo}[1]{{\left|#1\right|}}
\newcommand{\norma}[1]{{\left\|#1\right\|}}
\newcommand{\caratt}[1]{{\chi_{\strut#1}}}
\newcommand{\reali}{{\mathbb{R}}}

\newcommand{\interi}{{\mathbb{Z}}}

\renewcommand{\epsilon}{\varepsilon}
\renewcommand{\phi}{\varphi}

\renewcommand{\theta}{\vartheta}

\newcommand{\tv}{\mathinner{\rm TV}}
\newcommand{\spt}{\mathop{\rm spt}}

\renewcommand{\d}[1]{\mathinner{\mathrm{d}{#1}}}

\newcommand{\brho}{\boldsymbol{\rho}}
\newcommand{\bpi}{\boldsymbol{\pi{}}}
\newcommand{\bv}{\boldsymbol{v}}

\newcommand{\dt}{{\Delta t}}
\newcommand{\dx}{{\Delta x}}

\newcommand{\nds}{{\nu}}
\newcommand{\ndt}{{\iota}}

\DeclareMathOperator{\sgn}{sgn}


\newcommand{\be}{\begin{equation}}
\newcommand{\ee}{\end{equation}}

\definecolor{ffqqqq}{rgb}{1.,0.,0.}
\definecolor{uuuuuu}{rgb}{0.26666666666666666,0.26666666666666666,0.26666666666666666}

\makeatletter
\let\@fnsymbol\@arabic
\makeatother

\makeatletter
\newcommand\appendix@section[1]{%
\refstepcounter{section}%
\orig@section*{Appendix \@Alph\c@section: #1}%
\addcontentsline{toc}{section}{Appendix \@Alph\c@section: #1}%
}
\let\orig@section\section
\g@addto@macro\appendix{\let\section\appendix@section}
\makeatother

\title{Nonlocal approaches for multilane traffic models}

\author{Jan Friedrich\footnotemark[1] \and Simone G\"ottlich\footnotemark[1] \and Elena Rossi\footnotemark[2]}

\date{\today}

\begin{document}
\maketitle
\footnotetext[1]{University of Mannheim, Department of Mathematics, B6
  28-29, 68131 Mannheim, Germany. Email: \texttt{\{jan.friedrich,
    goettlich\}@uni-mannheim.de}} \footnotetext[2]{Università degli
  Studi di Modena e Reggio Emilia, Dipartimento di Scienze e Metodi
  dell'Ingegneria, Via Amendola 2 - Pad. Morselli, 42122 Reggio
  Emilia. E-mail: \texttt{elena.rossi13@unimore.it} }

\begin{abstract}
  We present a multilane traffic model based on balance laws, where
  the nonlocal source term is used to describe the lane changing
  rate. The modelling framework includes the consideration of local
  and nonlocal flux functions. Based on a Godunov type numerical
  scheme, we provide BV estimates and a discrete entropy
  inequality. Together with the $\L1$-contractivity property, we prove
  existence and uniqueness of weak solutions. Numerical examples show
  the nonlocal impact compared to local flux functions and local sources.

  \medskip

  \noindent\textit{AMS Subject Classification:} 35L65, 90B20, 65M12

  \medskip

  \noindent\textit{Keywords:} nonlocal balance laws, multilane traffic
  flow, Godunov scheme
\end{abstract}

\section{Introduction}
\label{sec:intro}
The progress in autonomous driving brings new challenges for the
modelling of traffic flow. Classical approaches such as the
well-established Lighthill-Whitham-Richards (LWR)
model \cite{LighthillWhitham, Richards} have been recently extended to
include more information on the surrounding traffic, see for
example~\cite{BlandinGoatin,friedrich2018godunov,GoatinScialanga,KeimerPflug2017,RidderShen}.
Therefore, we distinguish between \emph{local} traffic flow models
governed by conservation laws, where the fundamental diagram gives the
relation between flux and density, and \emph{nonlocal} models with
flux functions depending on an integral evaluation of the density or
velocity through a convolution product. In case of autonomous vehicles
the nonlocal models allow for an interpretation as the connection
radius.

Nonlocal traffic flow models have been introduced
in~\cite{BlandinGoatin} and since then have been studied regarding
existence and well-posedness,
e.g.~\cite{ChiarelloGoatin,friedrich2018godunov,KeimerPflug2017},
numerical schemes~\cite{BlandinGoatin,
  chalons2018high,friedrich2019maximum,friedrich2018godunov,
  GoatinScialanga}, convergence to local conservation laws
e.g.~\cite{keimer2019local} (even if this question is still an open
research problem), microscopic modelling
approaches~\cite{friedrich2020micromacro,shen2019stationary,GoatinRossi2017,
  RidderShen}, second order models~\cite{friedrich2020micromacro},
multi-class models~\cite{chiarello2019multiclass}, time delay
models~\cite{keimer2019nonlocal} and network
formulations~\cite{chiarelloFriedrichGoatinGK,shen2019stationary}.

The aim of this paper is to study a multilane model with local and
nonlocal flux combined with a source term that also incorporates a
nonlocality. Here, the nonlocal source term describes the lane
changing rate depending on a (nonlinear) evaluation of the
velocity. In this context, we refer to~\cite{colombo2007non}, where a
nonlocal source term is used to describe the lane
change. 
However, the modelling of our source term is inspired
by~\cite{HoldenRisebro}. 
We would also like to mention that a similar multilane model with
nonlocal flux and source has been recently introduced
in~\cite{BayenKeimer-preprint}. Therein, well-posedness and uniqueness
are proven based on Banach's fixed point theorem using the method of
characteristics.
In contrast to the
contributions~\cite{BayenKeimer-preprint,colombo2007non,HoldenRisebro},
we do not investigate the model on the continuous description and
present a Godunov type numerical scheme instead that can be used to
show existence and uniqueness of approximate solutions.

From a modelling point of view, the key differences
to~\cite{BayenKeimer-preprint} are that the nonlocal terms in the flux
and in the source do not have necessarily the same kernels. More
precisely, in~\cite{BayenKeimer-preprint}
only forward looking and decreasing kernels can be considered within
the convolution product.
It seems that our approach is more flexible, since also back- and
forward looking kernels can be considered. 

The paper is organised as follows: In
Section~\ref{sec:nonlocal_source} we present the model with local flux
and nonlocal source, while in Section~\ref{sec:numScheme} the Godunov
type discretization is addressed to show the existence
of a solution to the model as the limit of a 
sequence of approximate solutions. The uniqueness result is then
discussed in Section~\ref{sec:unique}. The extension to the model with
nonlocal flux and source is given in Section~\ref{sec:ext}, with
particular focus on differences to the model with local flux only. In
Section~\ref{sec:numEx} a collection of numerical experiments is
carried out.

\section{A multilane model with nonlocal source term}
\label{sec:nonlocal_source}

In~\cite{HoldenRisebro} the authors exploit the traditional LWR model
to study vehicular traffic on a road with multiple lanes. The key
feature of the model in~\cite{HoldenRisebro} is that drivers tend to
change to a neighbouring lane proportionally to the difference in the
(local) velocity between the lanes.

However, as it is already well known, the use of nonlocal terms may
lead to other dynamical behaviour, see e.g.~\cite{BlandinGoatin}.
In this paper, we aim to extend the multilane
model~\cite{HoldenRisebro} to account for a \emph{nonlocal} evaluation
of the velocity influencing the lane changing rate. The idea is that
at position $x$ the flow between neighbouring lanes is governed by the
difference in the velocity evaluated on the average density
\emph{around} position $x$, e.g.~on the interval $[x-\nu,x+\nu]$,
$\nu > 0$.  This modelling hypothesis is motivated by a feature
typical of drivers behaviour: when driving on a multilane road, at the
moment of changing lane, the driver checks what is happening behind
and in front of him/her, both on his/her lane and on the neighbouring
one(s).

Recall the model introduced in~\cite{HoldenRisebro} for a road with
$M$ lanes:
\begin{displaymath}
  \left\{
    \begin{array}{lr}
      \partial_t \rho_j + \partial_x \left(\rho_j \, v_j(\rho_j)\right) =
      S_{j-1}(\rho_{j-1}, \rho_1)-S_j(\rho_j, \rho_{j+1})
      & j = 1, \ldots, M,
      \\
      \rho_{j}(0,x) = \rho_{o,j}(x)
      & j = 1, \ldots, M,
    \end{array}
  \right.
\end{displaymath}
with
\begin{displaymath}
  S_j(\rho_j, \rho_{j+1}) =
  K \, (v_{j+1}(\rho_{j+1}) - v_j(\rho_j)) \,
  \begin{cases}
    \rho_j & \mbox{ if } v_{j+1}(\rho_{j+1}) \geq v_j(\rho_j),\\
    \rho_{j+1} & \mbox{ if } v_{j+1}(\rho_{j+1}) < v_j(\rho_j),
  \end{cases}
\end{displaymath}
and the boundary conditions
\begin{displaymath}
  S_0 (\rho_0, \rho_1) = S_M (\rho_M, \rho_{M+1}) = 0,
\end{displaymath}
$K$ being a dimensional constant ($1/m$). The modelling idea behind the term $S_j(\rho_j, \rho_{j+1})$ lies in the assumption that drivers prefer to be in the faster lane, and that the lane changing rate is proportional to the difference in the (local) velocity.\\
In contrast, our modelling approach accounts for a \emph{nonlocal}
evaluation of the velocity influencing the lane changing
rate. Therefore, we introduce a kernel function
$w_\nds \in \C0([-\nds, \nds], \reali_+)$, with $\nds >0$ and
$\int_\reali w_\nds(x) \d{x}=1$,
and define the flow from lane $j$ to lane $j+1$ as follows: for
$j=1, \ldots, M-1$
\begin{align}
  \nonumber
  & S_j(\rho_j, \rho_{j+1}, R_j, R_{j+1})
  \\
  \label{eq:source}
  = \ &
        K \, (v_{j+1}(R_{j+1}) - v_j(R_j)) \,
        \begin{cases}
          \rho_j \, (1 - \rho_{j+1}) & \mbox{ if } v_{j+1}(R_{j+1}) \geq v_j(R_j),\\
          \rho_{j+1} \, (1- \rho_j) & \mbox{ if } v_{j+1}(R_{j+1}) <
          v_j(R_j),
        \end{cases}
  \\
  \nonumber
  = \ &
        K \, \left[ (v_{j+1}(R_{j+1}) - v_j(R_j))^+ \, \rho_j \, (1 - \rho_{j+1})
        - (v_{j+1}(R_{j+1}) - v_j(R_j)) ^- \, \rho_{j+1} \, (1 - \rho_j)
        \right],
  \\
  \label{eq:Rj}
  & \mbox{with } \quad
    R_j = R_j (t,x) =  \left(\rho_j(t)  * w_\nds\right) (x),
\end{align}
where $(s)^+ = \max\{s,0\}$, $(s)^- = - \min\{s,0\}$. Conversely, the
flow from lane $j+1$ to lane $j$ equals
$- S_j (\rho_j, \rho_{j+1}, R_j, R_{j+1})$. Here, $K$ is still a
dimensional constant ($1/m$). For simplicity, in the following time
and space are scaled so that $K=1$. The model we study is thus
\begin{equation}
  \label{eq:model}
  \left\{
    \begin{array}{lr}
      \partial_t \rho_j + \partial_x \left(\rho_j \, v_j(\rho_j)\right) = S_{j-1}(\rho_{j-1}, \rho_j, R_{j-1}, R_j) -S_j(\rho_j, \rho_{j+1}, R_j, R_{j+1})
      & j= 1, \ldots, M,
      \\
      \rho_{j}(0,x) = \rho_{o,j}(x)
      & j= 1, \ldots, M,
    \end{array}
  \right.
\end{equation}
with boundary conditions
\begin{equation}\label{eq:boundaryconditions}
  S_0(\rho_0, \rho_1, R_0, R_1) = S_M (\rho_M, \rho_{M+1}, R_M, R_{M+1}) = 0.
\end{equation}

The meaning of the source term defined by~\eqref{eq:source} is the
following: similarly to the model studied in~\cite{HoldenRisebro}, the
lane changing rate is proportional to the difference in the velocity
between two adjacent lanes, but the velocities are now evaluated
\emph{nonlocally}, i.e.~in a neighbourhood of the \emph{current}
position.  Moreover, this rate is now proportional also to the density
in the receiving lane, meaning that, if that lane is crowded, only a
few vehicles can actually change lane. We remark that including this
latter factor allows to prove the invariance of the set $[0,1]^M$ for
model~\eqref{eq:model}, see Section~\ref{sec:invariance}. We emphasize
that this is not necessary for the \emph{local}
model~\cite{HoldenRisebro}.

In a next step, we define a \emph{weak solution} to~\eqref{eq:model}
and present the key result of this paper for the existence and
uniqueness of the solution.
As in~\cite{HoldenRisebro}, we assume that the velocity functions
$v_i$ are strictly decreasing, positive and scaled such that
$v_i(1) = 0$, $i=1,\ldots, M$. For simplicity, space and time are
scaled so that $K=1$. We assume that each map $f_j(u) = u \, v_j (u)$
admits a unique global maximum in the interval $[0,1]$ attained at
$u= \theta_j$.

\begin{definition}
  \label{def:sol}
  Let $\rho_{o,j} \in (\L1 \cap \BV)(\reali;[0,1])$, for
  $j=1, \ldots, M$. We say that
  $\rho_j \in \C0([0,T]; \L1(\reali; [0,1]))$, with
  $\rho_j(t, \cdot) \in \BV(\reali; [0,1])$ for $t \in [0,T]$, is a
  \emph{weak solution} to~\eqref{eq:model} with initial datum
  $\rho_{o,j}$ if for any $\phi \in \Cc1([0,T[ \times \reali; \reali)$
  and for all $j=1, \ldots, M$
  \begin{multline*}
    \int_0^T \!\!\!\int_\reali\!\! \left( \rho_j \, \partial_t \phi +
      \rho_j \, v_j(\rho_j) \, \partial_x \phi +
      \left(S_{j-1}(\rho_{j-1},\rho_j, R_{j-1}, R_j) - S_j(\rho_j,
        \rho_{j+1}, R_j, R_{j+1}) \right)\phi \right)\! \d{x} \d{t}
    \\
    + \int_\reali \rho_{o,j} \, \phi(0,x) \d{x} = 0,
  \end{multline*}
  with $S_j $ as in~\eqref{eq:source} and
  $R_j = R_j (t,x) = \left(\rho_j (t) * w_\nds\right) (x)$.  The
  solution $\rho_j$ is an \emph{entropy solution} if for any
  $\phi \in \Cc1([0,T[ \times \reali; \reali_+)$, for all convex
  entropy-entropy flux pairs $(\eta,q)$ and for all $j=1,\ldots, M$

  \begin{multline*}
    \int_0^T \int_\reali \left( \eta(\rho_j) \, \partial_t \phi +
      q(\rho_j) \, \partial_x \phi \right) \d{x} \d{t} + \int_\reali
    \eta(\rho_{o,j}) \, \phi(0,x) \d{x}
    \\
    \geq \int_0^T \int_\reali \eta'(\rho_j) \left( S_j(\rho_j,
      \rho_{j+1}, R_j, R_{j+1}) -S_{j-1}(\rho_{j-1}, \rho_j, R_{j-1},
      R_j) \right) \phi \d{x} \d{t}.
  \end{multline*}
\end{definition}

In the following it will be convenient to use the notation
$\brho = \left(\rho_1, \ldots, \rho_M\right)$ to denote the vector of
component $\rho_j$, $j=1, \ldots, M$. The initial datum to
problem~\eqref{eq:model} is then $\brho_o$.

\begin{theorem}
  \label{thm:main}
  Let $\brho_o \in (\L1 \cap \BV)(\reali; [0,1]^M)$. Then, for all
  $T>0$, problem~\eqref{eq:model} has a unique solution
  $\brho \in \C0([0,T]; \L1(\reali; [0,1]^M))$ in the sense of
  Definition~\ref{def:sol}. Moreover, the following estimates hold:
  for any $t \in [0,T]$
  \begin{align*}
    \norma{\brho(t)}_{\L1(\reali)} = \
    & \sum_{j=1}^M \norma{\rho_j(t)}_{\L1(\reali)}=
      \norma{\brho_o}_{\L1 (\reali)},
    \\
    \mbox{for }  j = 1, \ldots, M:
    & \quad  0\leq \rho_j(t,x)\leq 1,
    \\
    \sum_{j=1}^M\tv\left(\rho_j(t)\right) \leq \
    & \sum_{j=1}^M \tv(\rho_{j,o}).
  \end{align*}
\end{theorem}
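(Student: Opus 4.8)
The plan is to construct approximate solutions via a Godunov-type scheme (as announced in the introduction), derive uniform a priori bounds, pass to the limit for existence, and then prove uniqueness by an $\L1$-stability/contractivity argument. Concretely, one discretizes space and time with mesh sizes $\dx,\dt$ satisfying a CFL condition, and defines a marching scheme in which the conservative part $\partial_t\rho_j+\partial_x f_j(\rho_j)$ is handled by the Godunov numerical flux $F^n_{j+1/2}$ associated with $f_j$, while the nonlocal source $S_{j-1}-S_j$ is discretized explicitly using a discrete convolution $R^n_{j,i}=\sum_k \rho^n_{j,k}\,w_\nds$-weights. The resulting update reads, schematically,
\begin{displaymath}
  \rho^{n+1}_{j,i} = \rho^n_{j,i} - \frac{\dt}{\dx}\left(F^n_{j,i+1/2}-F^n_{j,i-1/2}\right) + \dt\left(S^n_{j-1,i}-S^n_{j,i}\right).
\end{displaymath}

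Next I would establish the three listed estimates at the discrete level, since they transfer to the limit by lower semicontinuity (for $\tv$) and by strong $\L1$ convergence (for the other two). \textbf{Invariance of $[0,1]^M$:} one shows $0\le\rho^{n+1}_{j,i}\le1$ assuming the same at time $n$; the lower bound uses monotonicity of the Godunov flux plus the fact that $S_j$ carries a factor $\rho_j$ when mass leaves lane $j$, so the outgoing source cannot push a cell negative, and the upper bound uses the complementary factor $(1-\rho_{j+1})$ in the receiving lane together with $v_j(1)=0$; this is exactly why the extra density factors were inserted in~\eqref{eq:source}, and it is the step most specific to this model. \textbf{$\L1$ conservation:} sum the update over $i$ and $j$; the flux differences telescope to zero, and the source contributions cancel in pairs because the gain $S_{j-1}$ of lane $j$ equals the loss of lane $j-1$, with the boundary convention~\eqref{eq:boundaryconditions} killing the endpoints — so total mass is exactly preserved. \textbf{BV bound:} this is the main obstacle. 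One writes the evolution of $\sum_j\sum_i|\rho^n_{j,i+1}-\rho^n_{j,i}|$; the Godunov part is TV-diminishing for each scalar conservation law by the standard argument, but the source term is only Lipschitz in its arguments and couples neighbouring lanes, so one gets a Gronwall-type inequality $\sum_j\tv(\rho^{n+1}_j)\le (1+C\dt)\sum_j\tv(\rho^n_j)+C\dt$, using that $v_j\in\C1$, $R_j$ inherits BV bounds from $\rho_j$ via Young's inequality, and $\|w_\nds\|_{\L\infty}$ controls the spatial variation of $R_j$. Whether one can get the clean bound stated in the theorem (with constant $1$, no exponential growth) rather than a Gronwall estimate will require care — most likely it follows because the source is antisymmetric in the lane index and the total-variation gain from one lane is compensated by the loss in the adjacent one, or else the statement is meant in the limit with the estimate holding on $[0,T]$ with a $T$-dependent constant; I would check the discrete computation carefully here.

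Given the uniform $\L\infty$, $\L1$ and BV bounds, Helly's/Kolmogorov's compactness theorem yields a subsequence converging in $\C0([0,T];\Lloc1)$; equi-tightness from the $\L1$ bound upgrades this to $\L1(\reali)$, and a Lax–Wendroff-type consistency argument shows the limit satisfies the weak formulation of Definition~\ref{def:sol} — one must check the discrete source term converges to $S_{j-1}-S_j$ evaluated at the limit, which uses continuity of $v_j$ and strong $\L1$ convergence of both $\rho_j$ and its convolution $R_j$. A parallel argument with Kružkov-type numerical entropy inequalities for the Godunov flux gives the entropy inequalities, the source appearing as the right-hand side after passing to the limit.

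Finally, for uniqueness I would prove $\L1$-contractivity: given two entropy solutions $\brho,\tilde\brho$, the doubling-of-variables technique applied lane by lane yields, for each $j$,
\begin{displaymath}
  \frac{\mathrm{d}}{\mathrm{d} t}\norma{\rho_j(t)-\tilde\rho_j(t)}_{\L1}
  \le \int_\reali \left| S_{j-1}-\tilde S_{j-1} \right| + \left| S_j - \tilde S_j \right| \d{x},
\end{displaymath}
where the conservative Kružkov part contributes nothing because $f_j$ is the same for both solutions. Then one estimates each source difference: $S_j$ depends Lipschitz-continuously on $\rho_j,\rho_{j+1},R_j,R_{j+1}$ (using $v_j\in\C1$ and the uniform bound $\rho_j\in[0,1]$), and $\|R_j-\tilde R_j\|_{\L1}\le\|\rho_j-\tilde\rho_j\|_{\L1}$ by Young's inequality with $\|w_\nds\|_{\L1}=1$. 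Summing over $j$ gives $\frac{\mathrm{d}}{\mathrm{d}t}\sum_j\|\rho_j-\tilde\rho_j\|_{\L1}\le C\sum_j\|\rho_j-\tilde\rho_j\|_{\L1}$, and Gronwall with coinciding initial data forces $\brho=\tilde\brho$, which also pins down the whole sequence's limit so that the approximation procedure indeed produces the unique solution. The delicate point in this last part is making sure the nonlocal coupling through $R_j$ and $R_{j+1}$ — which links lane $j$ to lanes $j\pm1$ — closes into a finite system of Gronwall inequalities, which it does because each $R$ is controlled by the $\L1$ norm of a single $\rho$.
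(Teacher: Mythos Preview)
Your overall architecture matches the paper's: build approximate solutions by a Godunov scheme with explicit source, prove invariance of $[0,1]^M$ and mass conservation at the discrete level, get compactness from a BV bound, pass to the limit via Lax--Wendroff and a discrete entropy inequality, and settle uniqueness by Kru\v zkov doubling. The invariance and mass-conservation arguments you sketch are essentially the paper's (Lemmas~\ref{lem:invariance} and~\ref{lem:cons}); the paper uses operator splitting rather than a single-step update, but that is cosmetic.

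The substantive gap is exactly where you flag uncertainty: the sharp total-variation bound $\sum_j\tv(\rho_j(t))\le\sum_j\tv(\rho_{j,o})$. At the discrete level the paper only proves the Gronwall-type estimate you anticipate, $\sum_j\tv(\rho_j^{n})\le e^{8t^n\mathcal{K}}\sum_j\tv(\rho_j^0)$ (Proposition~\ref{prop:BVspace}); there is no cancellation between lanes that removes the exponential. The clean bound in the theorem is obtained \emph{a posteriori} from genuine $\L1$-contractivity of the solution map (Corollary~\ref{cor:niceCons}): once you know $\sum_j\norma{\rho_j(t)-\pi_j(t)}_{\L1}\le\sum_j\norma{\rho_{j,o}-\pi_{j,o}}_{\L1}$, take $\bpi_o=\brho_o(\cdot+h)$ and let $h\to0$.

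Your uniqueness argument, however, yields only $\L1$-\emph{stability} with an exponential constant, not contractivity, so it cannot feed into this TV argument. The paper's route (Theorem~\ref{thm:contract}) is sharper: it uses that $S_j(u,w,U,W)$ is monotone---nondecreasing in $u,U$ and nonincreasing in $w,W$---to bound $\sum_j\int(\rho_j-\pi_j)^+$ and deduce that the solution operator is \emph{order preserving} (if $\brho_o\le\bpi_o$ componentwise then $\brho(t)\le\bpi(t)$). Since the operator also conserves the total $\L1$ mass, the Crandall--Tartar lemma upgrades order preservation to $\L1$-contractivity with constant $1$. Your Lipschitz-plus-Gronwall estimate is enough for uniqueness, but to recover the TV bound as stated you need this monotonicity/Crandall--Tartar step.
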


Existence of solutions to problem~\eqref{eq:model} is ensured by the
convergence of a sequence of approximate solutions, constructed
through a Godunov scheme, see Section~\ref{sec:conv}. Uniqueness
follows from the $\L1$-contractivty property for the whole solution
to~\eqref{eq:model}, see Section~\ref{sec:unique}. The $\L1$ and
$\L\infty$ bounds follow from the convergence of the scheme, while the
total variation bound is a consequence of the $\L1$-contractivity, see
Corollary~\ref{cor:niceCons}.

\section{Numerical discretization: a Godunov type scheme}
\label{sec:numScheme}

To prove several properties of the model~\eqref{eq:model} and in
particular Theorem~\ref{thm:main}, we introduce a uniform space mesh
of width $\Delta x$ and a time step $\Delta t$, subject to a CFL
condition to be detailed later on. For any $k \in \interi$ denote the
centre of the $k$-th cell by $x_k$ and its interfaces by
$x_{k\pm 1/2}$:
\begin{align*}
  x_k = \
  & \left( k+\frac12\right) \Delta x,
  &
    x_{k-1/2} = \
  & k \, \Delta x.
\end{align*}
Set $N_T=\lfloor T/\Delta t\rfloor$ and define the time mesh as
$t^n = n \, \Delta t$, $n= 0, \ldots, N_T$. Set
$\lambda = \Delta t / \Delta x$. The initial data are approximated as
follows: for $j=1,\ldots, M$ and $k \in \interi$,
\begin{displaymath}
  \rho_{j,k}^0 = \frac{1}{\Delta x} \int_{x_{k-1/2}}^{x_{k+1/2}} \rho_{o,j} (x) \d x.
\end{displaymath}
We construct an approximate solution $\brho_\Delta$
to~\eqref{eq:model} as follows: for $j=1, \ldots, M$ set
\begin{equation}
  \label{eq:rhoDelta}
  \rho_{j,\Delta} (t,x) = \rho_{j,k}^n \quad
  \mbox{ for } \quad
  \left\{
    \begin{array}{l}
      t \in [t^n, t^{n+1}[,\\
      x \in [x_{k-1/2}, x_{k+1/2}],
    \end{array}
  \right.
  \quad \mbox{ with } \quad
  \begin{array}{l}
    n = 0, \ldots, N_T -1, \\
    k \in \interi.
  \end{array}
\end{equation}
The approximate solution $\brho_\Delta$ is obtained via a Godunov type
scheme together with operator splitting, to account for the source
terms, see Algorithm~\ref{alg:1}.
\begin{lgrthm}
  \label{alg:1}
  \begin{align}
    \label{eq:numFlux}
    & F_j(u,w) =  \min\left\{
      f_j\left( \min\{u, \, \theta_j\}\right),
      f_j\left( \max\{w, \, \theta_j\}\right)
      \right\}
      \quad j=1, \ldots, M
    \\
    \nonumber
    & \mbox{ for }
      n= 0, \ldots, N_T -1 :
    \\
    \nonumber
    & \qquad \mbox{ for }
      j=1, \ldots, M \mbox{ and } k \in \interi:
    \\
    \label{eq:convStep}
    & \qquad \qquad
      \rho_{j,k}^{n+1/2} =
      \rho_{j,k}^n - \lambda \left[ F_j(\rho_{j,k}^n, \rho_{j,k+1}^n)
      -
      F_j(\rho_{j,k-1}^n, \rho_{j,k}^n)\right]
    \\
    \label{eq:relaxStep}
    & \qquad \qquad
      \begin{aligned}
        \rho_{j,k}^{n+1} = \ & \rho_{j,k}^{n+1/2} + \Delta t \,
        S_{j-1} (\rho_{j-1,k}^{n+1/2}, \rho_{j,k}^{n+1/2},
        R_{j-1,k}^{n+1/2}, R_{j,k}^{n+1/2})
        \\
        &- \Delta t \, S_j (\rho_{j,k}^{n+1/2}, \rho_{j+1,k}^{n+1/2},
        R_{j,k}^{n+1/2}, R_{j+1,k}^{n+1/2})
      \end{aligned}
  \end{align}
\end{lgrthm}

Above, $R_{j,k}^{n+1/2}$, for $j=1, \ldots, M$, $k \in \interi$ and
$n=0, \ldots, N_T-1$, denotes the discrete convolution operator, which
is defined in the Lemma below.
\begin{lemma}
  \label{lem:convOp}
  Let $w_\nds \in \C0([-\nds, \nds]; \reali_+)$ be such that
  $\int_\reali w_\nds = 1$.
  Define the set
  \begin{equation}
    \label{eq:H}
    \mathcal{H} = \left\{
      h \in \interi :
      \left\lfloor \frac{\inf \spt w_\nds}{\dx}\right\rfloor
      \leq h \leq
      \left\lfloor \frac{\sup \spt w_\nds}{\dx}\right\rfloor -1
    \right\}
  \end{equation}
  and for all $h \in \mathcal{H}$ set
  \begin{displaymath}
    \gamma_h := \int_{x_{h-1/2}}^{x_{h+1/2}}
    w_\nds(y-x) \d{y}.
  \end{displaymath}
  Given $r(x) = r_k \, \caratt{[x_{k-1/2}, x_{k+1/2}]}(x)$, with
  $r_k \in [0,1]$ and $k \in \interi$, the discrete convolution
  operator defined for all $k\in\interi$ as
  \begin{equation}
    \label{eq:disConvOp}
    R_{k} = \sum_{h \in \mathcal{H}} \gamma_h \, r_{k+h+1}
  \end{equation}
  satisfies the following properties:
  \begin{align}
    \label{eq:Rbound}
    R_k \in [0,1]
    & \mbox{ for all } k \in \interi,
    \\
    \label{eq:Rtv}
    \sum_{k \in\interi} \modulo{R_{k+1} - R_k}
    \leq \ &
             \sum_{k \in\interi} \modulo{r_{k+1} - r_k}.
  \end{align}
  Given
  $\tilde r (x) = \tilde r_k \,\caratt{[x_{k-1/2}, x_{k+1/2}]}(x)$,
  with $\tilde r_k \in [0,1]$ and $k \in \interi$, and $\tilde R_k$
  defined accordingly to~\eqref{eq:disConvOp}, then
  \begin{equation}
    \label{eq:R-2}
    \sum_{k \in \interi} \modulo{R_k - \tilde R_k}
    \leq \sum_{k \in \interi} \modulo{r_k - \tilde r_k}.
  \end{equation}
\end{lemma}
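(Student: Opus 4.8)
The statement collects three facts about the discrete convolution \eqref{eq:disConvOp}: the $[0,1]$-bound \eqref{eq:Rbound}, the total-variation diminishing property \eqref{eq:Rtv}, and the $\L1$-stability estimate \eqref{eq:R-2}. All three rest on the single structural observation that the weights $\gamma_h$, $h \in \mathcal{H}$, are nonnegative and sum to $1$. The plan is therefore to establish this normalisation first, and then derive each property by a short convexity/telescoping argument.

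First I would check that $\sum_{h \in \mathcal{H}} \gamma_h = 1$. By definition $\gamma_h = \int_{x_{h-1/2}}^{x_{h+1/2}} w_\nds(y-x)\,\d{y}$, which after the substitution $z = y - x$ becomes the integral of $w_\nds$ over an interval of length $\dx$; summing over $h$ in the range prescribed by \eqref{eq:H} yields the integral of $w_\nds$ over an interval that covers $\spt w_\nds$ (the floor functions in \eqref{eq:H} are exactly chosen so that the union of the cells $[x_{h-1/2},x_{h+1/2}]$, shifted, contains $\spt w_\nds$), hence $\sum_h \gamma_h = \int_\reali w_\nds = 1$. Nonnegativity of each $\gamma_h$ is immediate from $w_\nds \geq 0$. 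Here one has to be slightly careful: the value of $\gamma_h$ as written depends on $x$, but because $w_\nds$ enters only through the shift $y-x$ and we integrate over a full cell, a translation argument shows the sum is independent of which cell $x$ sits in; this is the one bookkeeping point that needs to be stated cleanly rather than waved through.

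Given normalisation, \eqref{eq:Rbound} is immediate: $R_k = \sum_h \gamma_h r_{k+h+1}$ is a convex combination of the numbers $r_{k+h+1} \in [0,1]$, so $R_k \in [0,1]$. For \eqref{eq:Rtv}, I would write
\begin{displaymath}
  R_{k+1} - R_k = \sum_{h \in \mathcal{H}} \gamma_h \,(r_{k+h+2} - r_{k+h+1}),
\end{displaymath}
take absolute values, use the triangle inequality and $\gamma_h \geq 0$ to get $\modulo{R_{k+1}-R_k} \leq \sum_h \gamma_h \modulo{r_{k+h+2}-r_{k+h+1}}$, then sum over $k \in \interi$ and exchange the order of summation; for each fixed $h$ the inner sum over $k$ is $\sum_{k} \modulo{r_{k+1}-r_k}$ (a reindexing), so the total is $\bigl(\sum_h \gamma_h\bigr)\sum_k \modulo{r_{k+1}-r_k} = \sum_k \modulo{r_{k+1}-r_k}$. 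The estimate \eqref{eq:R-2} follows by the identical computation applied to $R_k - \tilde R_k = \sum_h \gamma_h (r_{k+h+1} - \tilde r_{k+h+1})$.

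None of the steps is genuinely hard; the only place demanding care is the indexing in \eqref{eq:H} and the claim $\sum_h \gamma_h = 1$ — in particular verifying that the shift by $+1$ in the index $r_{k+h+1}$ in \eqref{eq:disConvOp} is consistent with the definition of $\gamma_h$ and with the placement of cell centres $x_k = (k+\tfrac12)\dx$, and that no mass of $w_\nds$ is lost at the endpoints of the range of $h$. I would isolate that verification as the first step and treat the three inequalities as short corollaries of convexity and Fubini for sums.
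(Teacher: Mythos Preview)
Your proposal is correct and matches the paper's proof essentially line for line: the paper also first notes $\gamma_h \geq 0$ and $\sum_{h\in\mathcal H}\gamma_h = \int_{\spt w_\nds} w_\nds(y-x)\,\d{y} = 1$, derives \eqref{eq:Rbound} from $r_k\in[0,1]$, obtains \eqref{eq:Rtv} by the triangle inequality plus reindexing, and declares \eqref{eq:R-2} ``entirely analogous''. Your extra remark about the dependence of $\gamma_h$ on $x$ and the index shift is a point the paper passes over in silence, so flagging it is a plus rather than a deviation.
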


\begin{proof}
  It is immediate to see that $\gamma_h \in [0,1]$ for all
  $h \in \mathcal{H}$, due to the properties of $w_\nds$. Hence, for
  all $k \in \interi$, we clearly have $R_k\geq 0$ and
  \begin{displaymath}
    R_k = \sum_{h \in \mathcal{H}} \gamma_h \, r_{k+h+1}
    \leq
    \sum_{h \in \mathcal{H}} \gamma_h
    =
    \int_{\spt w_\nds} w_\nds (y-x) \d{y}
    =
    1,
  \end{displaymath}
  since each $r_k \in [0,1]$.

  Pass now to~\eqref{eq:Rtv}: rearranging the indexes yields
  \begin{align*}
    \sum_{k \in\interi} \modulo{R_{k+1} - R_k}
    \leq \
    & \sum_{k \in\interi} \sum_{h \in \mathcal{H}}
      \gamma_h \, \modulo{r_{k+h+2} - r_{k+h+1}}
    \\
    = \ &
          \left( \sum_{h \in \mathcal{H}} \gamma_h\right)
          \sum_{k \in\interi}  \modulo{r_{k+1} - r_{k}}
          =
          \sum_{k \in\interi}  \modulo{r_{k+1} - r_{k}}.
  \end{align*}
  The proof of~\eqref{eq:R-2} is entirely analogous.
\end{proof}

\begin{remark}
  According to the support of the kernel function $w_\nds$, the
  discrete convolution operator defined by~\eqref{eq:disConvOp} has
  one of the following two forms:
  \begin{itemize}
  \item {\bf Forward looking kernel:} if
    $\spt w_\nds \subseteq [0, \nds]$, then
    $\mathcal{H} = \left[0, \lfloor\frac{\nds}{\Delta
        x}\rfloor-1\right]$, so that
    \begin{equation}
      \label{eq:forward}
      R_{j,k}^{n+1/2} =
      \sum_{h=0}^{\lfloor\nds/\Delta x\rfloor-1}\gamma_h \,
      \rho^{n+1/2}_{j,k+h+1}.
    \end{equation}

  \item {\bf Back- and forward looking kernel:} if
    $\spt w_\nds \subseteq [-\nds, \nds]$,
    $\mathcal{H}= \left[-\lfloor\frac{\nds}{\Delta x}\rfloor,
      \lfloor\frac{\nds}{\Delta x}\rfloor-1\right]$, so that
    \begin{equation}
      \label{eq:backfor}
      R_{j,k}^{n+1/2} =
      \sum_{h=-\lfloor\nds/\Delta x\rfloor}^{\lfloor\nds/\Delta x\rfloor-1}\gamma_h \, \rho^{n+1/2}_{j,k+h+1}.
    \end{equation}
  \end{itemize}
\end{remark}

\subsection{Invariance of the set \texorpdfstring{$[0,1]^M$}{[0,1]}}
\label{sec:invariance}

Under a suitable CFL condition, if each component of the initial datum
takes values in the interval $[0, 1]$, then also the components of the
approximate solution constructed via Algorithm~\ref{alg:1} attain
values in the same interval $[0, 1]$: the set $[0,1]^M$ is thus
invariant for problem~\eqref{eq:model}.

\begin{lemma}
  \label{lem:invariance}
  Let $\brho_o \in \L\infty (\reali; [0,1]^M)$. Assume that
  \begin{equation}
    \label{eq:CFL}
    \lambda \,\mathcal{V} \leq  \frac12,
  \end{equation}
  where
  \begin{align}
    \label{eq:VC1}
    \mathcal{V} = \
    & \norma{\bv}_{\C1([0,1]; \reali^M)}
      = V_{\mathrm{max}} + V'_{\mathrm{max}},
    \\
    \label{eq:vmax}
    V_{\mathrm{max}}
    = \
    & \norma{\bv}_{\C0([0,1]; \reali^M)}
      = \max_{j=1, \ldots, M} \norma{v_j}_{\L\infty([0,1]; \reali)},
    \\
    \label{eq:vdiffmax}
    V'_{\mathrm{max}}
    = \
    & \norma{\bv'}_{\C0([0,1]; \reali^M)}
      = \max_{j=1, \ldots, M} \norma{v'_j}_{\L\infty([0,1]; \reali)}.
  \end{align}
  Then, for all $t>0$ and $x \in \reali$, the piece-wise constant
  approximate solution $\brho_\Delta$ constructed through
  Algorithm~\ref{alg:1} attains value in the set $[0,1]^M$, i.e.
  \begin{displaymath}
    0 \leq \rho_{j, \Delta} (t,x) \leq 1
    \quad \mbox{ for all } j = 1, \ldots, M.
  \end{displaymath}
\end{lemma}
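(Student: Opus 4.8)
The plan is to prove invariance by induction on the time step $n$, showing that if $\rho_{j,k}^n \in [0,1]$ for all $j,k$, then after the convective step \eqref{eq:convStep} and the relaxation step \eqref{eq:relaxStep} we still have $\rho_{j,k}^{n+1} \in [0,1]$. The two substeps require quite different arguments, so I would treat them separately.

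For the convective step, the standard Godunov monotonicity argument applies. First I would write $\rho_{j,k}^{n+1/2}$ as a function $\mathcal{G}_j(\rho_{j,k-1}^n, \rho_{j,k}^n, \rho_{j,k+1}^n)$ and check that, under the CFL condition \eqref{eq:CFL} (which in particular gives $\lambda V_{\mathrm{max}} \le 1/2$, more than enough), $\mathcal{G}_j$ is monotone nondecreasing in each argument: monotonicity in the outer arguments follows because the numerical flux $F_j$ in \eqref{eq:numFlux} is the Godunov flux for $f_j$ (nondecreasing in the first slot, nonincreasing in the second), and monotonicity in the central argument uses $|\partial_u F_j| \le \Lip(f_j) \le \mathcal{V}$ together with $\lambda \mathcal{V} \le 1/2 < 1$. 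Since $\mathcal{G}_j$ is monotone and satisfies $\mathcal{G}_j(0,0,0)=0$ and $\mathcal{G}_j(1,1,1)=1$ (because $f_j(0)=f_j(1)=0$, as $v_j(1)=0$), applying it to the constant-$0$ and constant-$1$ states gives $0 \le \rho_{j,k}^{n+1/2} \le 1$.

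For the relaxation step I would work with a fixed $k$ and the whole vector $(\rho_{1,k}^{n+1/2}, \ldots, \rho_{M,k}^{n+1/2})$, which lies in $[0,1]^M$ by the previous step; by Lemma~\ref{lem:convOp}, the corresponding $R_{j,k}^{n+1/2}$ also lie in $[0,1]$. To show $\rho_{j,k}^{n+1} \ge 0$: the only possibly-negative contributions to \eqref{eq:relaxStep} come from the outflow terms, i.e. from $-\Delta t\, S_j(\cdots)$ when $v_{j+1}(R_{j+1}) \ge v_j(R_j)$ (outflow $\rho_j(1-\rho_{j+1})$ from lane $j$) and from $+\Delta t\, S_{j-1}(\cdots)$ when $v_j(R_j) < v_{j-1}(R_{j-1})$ (outflow $\rho_j(1-\rho_{j-1})$ from lane $j$). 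Each such outflow is bounded above by $\Delta t\, V_{\mathrm{max}} \,\rho_{j,k}^{n+1/2}(1-\rho_{\cdot,k}^{n+1/2}) \le \Delta t\, V_{\mathrm{max}}\,\rho_{j,k}^{n+1/2}$, so $\rho_{j,k}^{n+1} \ge \rho_{j,k}^{n+1/2}(1 - 2\Delta t\, V_{\mathrm{max}})$, which is nonnegative provided $2\Delta t\, V_{\mathrm{max}} \le \Delta x \cdot \frac{1}{\Delta x}$... more carefully, one needs $\Delta t\, V_{\mathrm{max}}$ small; here the CFL \eqref{eq:CFL} combined with $\Delta t \le \Delta x$ (or a direct smallness assumption on $\Delta t$) closes it — I would state precisely which inequality is used. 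The bound $\rho_{j,k}^{n+1} \le 1$ is symmetric: rewrite the update for $1 - \rho_{j,k}^{n+1}$ and observe that the decrease of $1-\rho_j$ is driven by the inflow terms into lane $j$, each of the form $\Delta t\, V_{\mathrm{max}}\, \rho_{\cdot}(1-\rho_j) \le \Delta t\, V_{\mathrm{max}}(1-\rho_{j,k}^{n+1/2})$, giving $1 - \rho_{j,k}^{n+1} \ge (1-\rho_{j,k}^{n+1/2})(1 - 2\Delta t\, V_{\mathrm{max}})$.

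The main obstacle is the bookkeeping in the relaxation step: one must correctly identify, for a generic lane $j$, which of the four source-term branches ($S_{j-1}$ and $S_j$, each with its two velocity-ordering cases) contribute as inflow versus outflow for lane $j$, handle the boundary lanes $j=1$ and $j=M$ via \eqref{eq:boundaryconditions}, and verify that the factors $(1-\rho_{j+1})$, $(1-\rho_{j-1})$ etc. are exactly what make the sign estimates tight (this is the structural reason the $(1-\rho)$ factors were inserted into \eqref{eq:source}). I would also need to make sure the CFL constant $1/2$ in \eqref{eq:CFL} genuinely suffices for both substeps simultaneously — the convective step needs $\lambda\mathcal{V} \le 1/2$ with the factor $1/2$ arising from the two-sided flux difference, and I would confirm the relaxation bound does not demand a stricter condition than \eqref{eq:CFL} already provides (possibly using $\Delta t \le \lambda\,\Delta x$ trivially and $\mathcal V \ge V_{\mathrm{max}}$).
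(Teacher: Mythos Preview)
Your overall strategy matches the paper's: induction on $n$, with the convective step handled by Godunov monotonicity (the paper simply cites \cite[Proposition~3.1(b)]{crandall1980monotone}) and the relaxation step handled by direct estimation. Where you diverge is in the relaxation step. The paper fixes a generic interior lane ($j=2$), expands \eqref{eq:relaxStep} using \eqref{eq:source}, and then splits into four cases according to the signs of $v_2(R_2)-v_1(R_1)$ and $v_3(R_3)-v_2(R_2)$; in each case it verifies $0\le\rho_{2,k}^{n+1}\le 1$ by hand, repeatedly invoking two preliminary facts of the form $1-\Delta t\,V_{\max}\ge 0$ and $1-2\Delta t\,V_{\max}\ge 0$. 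Your inflow/outflow bookkeeping is the same computation reorganized: you observe that at most two terms can remove mass from lane $j$ and each is bounded by $\Delta t\,V_{\max}\,\rho_{j,k}^{n+1/2}$, giving $\rho_{j,k}^{n+1}\ge\rho_{j,k}^{n+1/2}(1-2\Delta t\,V_{\max})$, with the symmetric estimate for $1-\rho_{j,k}^{n+1}$. This is cleaner and avoids the four-way case split, while the paper's version makes the role of each velocity ordering completely explicit; both arrive at the same key inequality $2\Delta t\,V_{\max}\le 1$.

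The one genuine loose end in your proposal is exactly the point you flagged: how \eqref{eq:CFL} yields $2\Delta t\,V_{\max}\le 1$. The CFL condition gives $\lambda\mathcal{V}\le 1/2$, hence $\Delta t\,V_{\max}\le\Delta t\,\mathcal{V}=\lambda\,\Delta x\,\mathcal{V}\le\Delta x/2$, and the paper closes this by invoking the standing assumption $\Delta x<1$ (stated explicitly inside the proofs of facts~(i) and~(ii)). You should make that hypothesis explicit rather than leave it as ``a direct smallness assumption on $\Delta t$''; without it, \eqref{eq:CFL} alone does not control $\Delta t$ and the relaxation bound does not follow.
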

\begin{proof}
  The proof is done by induction and follows the idea of the proof
  of~\cite[Lemma~2.2]{GoatinRossi}. Assume that
  $\brho_\Delta (t,x) \in [0,1]^M $ for all $x \in \reali$ and
  $t< t^{n+1}$. In particular, $0 \leq \rho_{j,k}^n \leq 1$ for
  $j=1, \ldots, M$ and all $k \in \interi$.  Consider the convective
  step~\eqref{eq:convStep} of Algorithm~\ref{alg:1}: the Godunov type
  scheme preserves the invariance of the set $[0,1]^M$,
  i.e.~$0 \leq \rho_{j,k}^{n+1/2} \leq 1$ for $j=1, \ldots, M$ and all
  $k \in \interi$,
  see~\cite[Proposition~3.1~(b)]{crandall1980monotone}.
  \\
  Now focus on the relaxation step~\eqref{eq:relaxStep}, taking care
  of the source term. Without loss of generality, we may fix $j=2$, so
  to have contributions from both the preceding and the subsequent
  lanes. Omitting the index $n+1/2$ to improve readability,
  by~\eqref{eq:convStep} and~\eqref{eq:source}, we get
  \begin{align}
        \rho_{2,k}^{n+1}
    \label{eq:1}
    = \ & \rho_{2,k} + \Delta t \, S_1 (\rho_{1,k},
          \rho_{2,k}, R_{1,k}, R_{2,k}) - \Delta t \, S_2 (\rho_{2,k},
          \rho_{3,k}, R_{2,k}, R_{3,k})
    \\
    \nonumber
    = \ & \rho_{2,k} + \Delta t \left[ \left(v_2(R_{2,k}) - v_1
          (R_{1,k})\right) ^+ \rho_{1,k} (1-\rho_{2,k}) -
          \left(v_2(R_{2,k}) - v_1 (R_{1,k})\right) ^- \rho_{2,k}
          (1-\rho_{1,k}) \right]
    \\
    \nonumber
      & \,\,\,\,\quad - \Delta t \left[ \left(v_3(R_{3,k}) - v_2
        (R_{2,k})\right) ^+ \rho_{2,k} (1-\rho_{3,k}) -
        \left(v_3(R_{3,k}) - v_2 (R_{2,k})\right) ^- \rho_{3,k}
        (1-\rho_{2,k}) \right].
  \end{align}
  There are four possibilities, according to the signs of the
  differences in the velocity:
  \begin{center}
    \begin{tabular}{c|c|c}
      & $v_2 (R_{2,k}) \geq v_1 (R_{1,k})$ & $v_2 (R_{2,k}) < v_1 (R_{1,k}) $\\
      \hline
      $v_3 (R_{3,k}) \geq v_2 (R_{2,k})$ &
                                           \bf{Case A} & \bf{Case B}
      \\
      \hline
      $v_3 (R_{3,k}) < v_2 (R_{2,k})$ & \bf{Case C} & \bf{Case D}
    \end{tabular}
  \end{center}
  We analyse them in detail below. Observe first that the following
  facts hold true:
  \begin{enumerate}[label=\textbf{(\roman*)}]
  \item\label{it:utile1} Whenever
    $v_{\ell}(R_{\ell,k}) \geq v_{j}(R_{j,k})$,
    $j, \ell \in\{1, \ldots, M\}$, $j \neq \ell$, then
    \begin{displaymath}
      1 - \Delta t \left( v_{\ell}(R_{\ell,k}) - v_{j}(R_{j,k}) \right) \rho_{j,k} \geq 0.
    \end{displaymath}
    Indeed, thanks to the CFL condition~\eqref{eq:CFL} and to the fact
    that $\Delta x <1$, we have
    \begin{displaymath}
      1 - \Delta t \,  v_{\ell}(R_{\ell,k}) \, \rho_{j,k}
      + \Delta t \,  v_{j}(R_{j,k}) \, \rho_{j,k}
      \geq
      1 - \Delta t \,  v_{\ell}(R_{\ell,k}) \, \rho_{j,k}
      \geq
      1 - \Delta t \, V_{\textrm{max}}
      \geq 0.
    \end{displaymath}

  \item\label{it:utile2} Whenever
    $v_{j+1}(R_{j+1,k}) < v_{j}(R_{j,k})$ and
    $v_{j}(R_{j,k}) \geq v_{j-1}(R_{j-1,k})$,
    $j \in \{2, \ldots, M-1\}$, then
    \begin{displaymath}
      1 - \Delta t \left[\left( v_{j}(R_{j,k}) - v_{j-1}(R_{j-1,k}) \right) \rho_{j-1,k}
        - \left( v_{j+1}(R_{j+1,k}) - v_{j}(R_{j,k}) \right) \rho_{j+1,k}\right]
      \geq 0.
    \end{displaymath}
    Indeed, thanks to the CFL condition~\eqref{eq:CFL}, to the fact
    that $\Delta x <1$ and that $\rho_{j,k} \leq 1$, we have
    \begin{align*}
      & 1  - \Delta t \left[\left( v_{j}(R_{j,k}) - v_{j-1}(R_{j-1,k}) \right) \rho_{j-1,k}
        - \left( v_{j+1}(R_{j+1,k}) - v_{j}(R_{j,k}) \right) \rho_{j+1,k}\right]
      \\
      \geq \
      & 1 - \Delta t \left[ v_{j}(R_{j,k}) \, \rho_{j-1,k}
        + v_{j}(R_{j,k}) \, \rho_{j+1,k}\right]
      \\
      \geq \
      &  1 -  2 \, \Delta t \,  v_{j}(R_{j,k})
      \\
      \geq  \
      & 1 - 2 \, \Delta t \, V_{\textrm{max}}
        \geq 0.
    \end{align*}
  \end{enumerate}
  The properties above will be used in order to exploit the following
  elementary inequality:
  \begin{displaymath}
    0\leq \rho \leq 1
    \quad \mbox{ and } \quad
    A \geq 0
    \quad \Longrightarrow \quad
    \rho \, A \leq A.
  \end{displaymath}

  \paragraph{Case A.} Here~\eqref{eq:1} reads
  \begin{align*}
    \rho_{2,k}^{n+1} = \
    & \rho_{2,k} + \Delta t \,
      \left(v_2(R_{2,k}) - v_1 (R_{1,k})\right) \rho_{1,k} (1-\rho_{2,k})
      - \Delta t \,
      \left(v_3(R_{3,k}) - v_2 (R_{2,k})\right) \rho_{2,k} (1-\rho_{3,k}).
  \end{align*}
  Thus, aiming for the bound from above, thanks to~\ref{it:utile1} and
  since $0 \leq \rho_{2,k} \leq 1$, we get
  \begin{align*}
    \rho_{2,k}^{n+1} \leq \
    & \rho_{2,k} + \Delta t \,
      \left(v_2(R_{2,k}) - v_1 (R_{1,k})\right) \rho_{1,k} (1-\rho_{2,k})
    \\
    = \
    & \rho_{2,k} \left(
      1 - \Delta t  \left(v_2(R_{2,k}) - v_1 (R_{1,k})\right) \rho_{1,k} \right)
      + \Delta t  \left(v_2(R_{2,k}) - v_1 (R_{1,k})\right) \rho_{1,k}
    \\
    \leq \
    & 1 - \Delta t  \left(v_2(R_{2,k}) - v_1 (R_{1,k})\right) \rho_{1,k}
      + \Delta t  \left(v_2(R_{2,k}) - v_1 (R_{1,k})\right) \rho_{1,k}
    \\
    = \
    & 1.
  \end{align*}
  Pass now to the positivity: thanks to~\ref{it:utile1} we obtain
  \begin{align*}
    \rho_{2,k}^{n+1} \geq \
    & \rho_{2,k} - \Delta t \,
      \left(v_3(R_{3,k})- v_2 (R_{2,k})\right) \rho_{2,k} (1-\rho_{3,k})
    \\
    \geq \
    &
      \rho_{2,k} - \Delta t \,
      \left(v_3(R_{3,k})- v_2 (R_{2,k})\right) \rho_{2,k}
    \\
    \geq \ & 0.
  \end{align*}

  \paragraph{Case B.} In this case~\eqref{eq:1} reads
  \begin{align*}
    \rho_{2,k}^{n+1} = \
    & \rho_{2,k} + \Delta t \,
      \left(v_2(R_{2,k}) - v_1 (R_{1,k})\right) \rho_{2,k} (1-\rho_{1,k})
      - \Delta t \,
      \left(v_3(R_{3,k}) - v_2 (R_{2,k})\right) \rho_{2,k} (1-\rho_{3,k}).
  \end{align*}
  Since $v_2(R_{2,k}) - v_1 (R_{1,k})<0$ and
  $v_3(R_{3,k}) -v_2 (R_{2,k})\geq 0$, it is immediate to prove that
  $\rho_{2,k}^{n+1} \leq \rho_{2,k}^{n+1/2}$ and thus
  $\rho_{2,k}^{n+1}$ is bounded by $1$ from above.  Moreover, by the
  CFL condition~\eqref{eq:CFL},
  \begin{align*}
    \rho_{2,k}^{n+1} \geq \
    & \rho_{2,k} + \Delta t \left[
      \left(v_2(R_{2,k}) - v_1 (R_{1,k})\right) \rho_{2,k}
      - \left(v_3(R_{3,k}) - v_2 (R_{2,k})\right) \rho_{2,k}
      \right]
    \\
    \geq \
    & \rho_{2,k} \left(1
      - \Delta t \, \left(v_1 (R_{1,k}) + v_3(R_{3,k})\right)
      \right)
    \\
    \geq \
    & \rho_{2,k} (1 - 2\, \Delta t \, V_{\textrm{max}})
    \\
    \geq \
    & 0.
  \end{align*}

  \paragraph{Case C.} Here~\eqref{eq:1} reads
  \begin{align*}
    \rho_{2,k}^{n+1} = \
    & \rho_{2,k} + \Delta t
      \left[\left(v_2(R_{2,k}) - v_1 (R_{1,k})\right) \rho_{1,k} (1-\rho_{2,k})
      -
      \left(v_3(R_{3,k}) - v_2 (R_{2,k})\right) \rho_{3,k} (1-\rho_{2,k})\right].
  \end{align*}
  The positivity of $\rho_{2,k}^{n+1}$ follows immediately, since
  $v_2(R_{2,k}) - v_1 (R_{1,k})\geq 0$ and
  $v_3(R_{3,k}) -v_2 (R_{2,k})< 0$. On the other hand, thanks
  to~\ref{it:utile2},
  \begin{align*}
    \rho_{2,k}^{n+1}  = \
    & \rho_{2,k}
      \left( 1
      - \Delta t \left(
      \left(v_2(R_{2,k}) - v_1 (R_{1,k})\right) \rho_{1,k}
      + \left(v_3(R_{3,k}) - v_2 (R_{2,k})\right) \rho_{3,k}
      \right)\right)
    \\
    & + \Delta t
      \left(\left(v_2(R_{2,k}) - v_1 (R_{1,k})\right) \rho_{1,k}
      - \left(v_3(R_{3,k}) - v_2 (R_{2,k})\right) \rho_{3,k} \right)
    \\
    \leq \
    & 1
      - \Delta t \left(
      \left(v_2(R_{2,k}) - v_1 (R_{1,k})\right) \rho_{1,k}
      + \left(v_3(R_{3,k}) - v_2 (R_{2,k})\right) \rho_{3,k}
      \right)
    \\
    &
      + \Delta t
      \left(\left(v_2(R_{2,k}) - v_1 (R_{1,k})\right) \rho_{1,k}
      - \left(v_3(R_{3,k}) - v_2 (R_{2,k})\right) \rho_{3,k} \right)
    \\
    \leq \ & 1.
  \end{align*}

  \paragraph{Case D.} In this latter case~\eqref{eq:1} reads
  \begin{align*}
    \rho_{2,k}^{n+1} = \
    & \rho_{2,k} + \Delta t
      \left[\left(v_2(R_{2,k}) - v_1 (R_{1,k})\right) \rho_{2,k} (1-\rho_{1,k})
      -
      \left(v_3(R_{3,k}) - v_2 (R_{2,k})\right) \rho_{3,k} (1-\rho_{2,k})\right].
  \end{align*}
  By~\ref{it:utile1}, since $0 \leq \rho_{2,k} \leq 1$, we get
  \begin{align*}
    \rho_{2,k}^{n+1} \leq \
    & \rho_{2,k} - \Delta t
      \left(v_3(R_{3,k}) - v_2 (R_{2,k})\right) \rho_{3,k} (1-\rho_{2,k})
    \\
    = \
    &
      \rho_{2,k} \left( 1 + \Delta t
      \left(v_3(R_{3,k}) - v_2 (R_{2,k})\right) \rho_{3,k}
      \right)
      - \Delta t
      \left(v_3(R_{3,k}) - v_2 (R_{2,k})\right) \rho_{3,k}
    \\
    \leq \
    &
      1 + \Delta t
      \left(v_3(R_{3,k}) - v_2 (R_{2,k})\right) \rho_{3,k}
      - \Delta t
      \left(v_3(R_{3,k}) - v_2 (R_{2,k})\right) \rho_{3,k}
    \\
    \leq \ & 1.
  \end{align*}
  The positivity of $\rho_{2,k}^{n+1}$ follows from the CFL
  condition~\eqref{eq:CFL}:
  \begin{align*}
    \rho_{2,k}^{n+1} \geq \
    & \rho_{2,k} + \Delta t
      \left(v_2(R_{2,k}) - v_1 (R_{1,k})\right) \rho_{2,k} (1-\rho_{1,k})
    \\
    \geq \
    &
      \rho_{2,k} \left( 1 +  \Delta t
      \left(v_2(R_{2,k}) - v_1 (R_{1,k})\right)\right)
    \\
    \geq \
    &  \rho_{2,k} \left( 1 -  \Delta t \,  v_1 (R_{1,k})\right)
    \\
    \geq \
    & 0.
  \end{align*}
  \smallskip
  \noindent The proof is completed.
\end{proof}

\subsection{Conservation of total mass}
\label{sec:conservation}

When considering an initial datum $\brho_o$ with finite total mass,
that is $\sum_{j=1}^M \norma{\rho_{o,j}}_{\L1(\reali)} < + \infty$, it
is possible to prove that the corresponding solution preserves this
norm. Clearly, because of lane changing, the $\L1$-norm is not
preserved in each lane, but only in the whole.

\begin{lemma}
  \label{lem:cons}
  Let $\brho_o \in \L1 (\reali; [0,1]^M)$. Under the CFL
  condition~\eqref{eq:CFL}, the piece-wise constant approximate
  solution $\brho_\Delta$ constructed through Algorithm~\ref{alg:1}
  preserves the $\L1$-norm, in the sense that for all $t>0$
  \begin{displaymath}
    \norma{\brho_\Delta (t)}_{\L1 (\reali)}
    =
    \sum_{j=1}^M \norma{\rho_{j,\Delta}(t)}_{\L1(\reali)}
    =
    \sum_{j=1}^M \norma{\rho_{o,j}}_{\L1(\reali)}
    =
    \norma{\brho_o}_{\L1 (\reali)}.
  \end{displaymath}
\end{lemma}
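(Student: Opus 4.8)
The plan is to show that the $\L1$-norm of the approximate solution is preserved at each full time step, by tracking separately the effect of the convective step \eqref{eq:convStep} and of the relaxation step \eqref{eq:relaxStep}. Since $\brho_\Delta$ is piece-wise constant and, by Lemma~\ref{lem:invariance}, takes values in $[0,1]^M$, we have $\norma{\rho_{j,\Delta}(t^n)}_{\L1(\reali)} = \dx \sum_{k \in \interi} \rho_{j,k}^n$ for each $j$, so it suffices to prove
\begin{displaymath}
  \sum_{j=1}^M \sum_{k \in \interi} \rho_{j,k}^{n+1}
  = \sum_{j=1}^M \sum_{k \in \interi} \rho_{j,k}^{n}
  \qquad \text{for all } n = 0, \ldots, N_T - 1,
\end{displaymath}
and then conclude by induction on $n$, matching the value at $t=0$ through the cell-average definition of $\rho_{j,k}^0$, which already gives $\dx \sum_k \rho_{j,k}^0 = \norma{\rho_{o,j}}_{\L1(\reali)}$.

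First I would handle the convective half-step. Summing \eqref{eq:convStep} over $k \in \interi$, the numerical flux differences telescope: for each fixed $j$,
\begin{displaymath}
  \sum_{k \in \interi} \rho_{j,k}^{n+1/2}
  = \sum_{k \in \interi} \rho_{j,k}^{n}
  - \lambda \sum_{k \in \interi}
  \left[ F_j(\rho_{j,k}^n, \rho_{j,k+1}^n) - F_j(\rho_{j,k-1}^n, \rho_{j,k}^n) \right]
  = \sum_{k \in \interi} \rho_{j,k}^{n},
\end{displaymath}
the telescoping sum vanishing because $\rho_{o,j} \in \L1(\reali)$ forces $\rho_{j,k}^n \to 0$ as $\snr{k} \to \infty$ (a fact that should be recorded, together with the finite speed of propagation of the Godunov scheme, to justify that all sums involved are absolutely convergent and rearrangements are legitimate). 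Hence the convective step conserves mass in each lane separately.

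Next I would handle the relaxation step. Summing \eqref{eq:relaxStep} over both $k \in \interi$ \emph{and} $j = 1, \ldots, M$, the source contributions collapse by a discrete ``conservation of lane-changing'' identity: the term $-\dt\, S_j(\rho_{j,k}^{n+1/2}, \rho_{j+1,k}^{n+1/2}, R_{j,k}^{n+1/2}, R_{j+1,k}^{n+1/2})$ appearing in the equation for $\rho_{j,k}^{n+1}$ is exactly cancelled by the term $+\dt\, S_{(j+1)-1}(\rho_{j,k}^{n+1/2}, \rho_{j+1,k}^{n+1/2}, R_{j,k}^{n+1/2}, R_{j+1,k}^{n+1/2})$ appearing in the equation for $\rho_{j+1,k}^{n+1}$, for every $k$ and every $j = 1, \ldots, M-1$; the remaining boundary terms vanish thanks to \eqref{eq:boundaryconditions}, i.e. $S_0 = S_M = 0$. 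Therefore $\sum_{j,k} \rho_{j,k}^{n+1} = \sum_{j,k} \rho_{j,k}^{n+1/2} = \sum_{j,k} \rho_{j,k}^{n}$, which closes the induction and gives the claim for all $t>0$ (noting that $\brho_\Delta$ is constant in $t$ on each interval $[t^n, t^{n+1}[$).

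The only mild obstacle is bookkeeping rather than mathematics: one must ensure absolute convergence of the double series before rearranging, which follows from $\brho_o \in \L1$ and the finite propagation speed of the scheme (the $\L\infty$ bound of Lemma~\ref{lem:invariance} together with the fact that only finitely many cells are reached from the support of $\rho_{o,j}$ up to time $t^n$), and one must be slightly careful that the discrete convolution $R_{j,k}^{n+1/2}$ also decays at infinity so that $S_j$ is summable in $k$ — this is immediate from \eqref{eq:disConvOp} since $\mathcal H$ is finite and each $\rho_{j,k}^{n+1/2}$ decays. With these observations the two telescoping/cancellation computations are entirely routine.
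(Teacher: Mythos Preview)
Your proposal is correct and follows essentially the same approach as the paper: induction on time steps, with the convective step conserving mass by the telescoping structure of the conservative Godunov flux, and the relaxation step conserving total mass because the source terms $S_j$ cancel pairwise across adjacent lanes (with $S_0=S_M=0$ handling the boundary), positivity from Lemma~\ref{lem:invariance} being used to identify $\sum_k \rho_{j,k}^n$ with the $\L1$-norm. The paper's proof is terser (it just cites the conservativity of the Godunov scheme and notes that the source terms sum to zero), while you spell out the telescoping and the summability justifications more explicitly, but there is no substantive difference.
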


\begin{proof}
  The proof is done by induction. Since the Godunov type
  scheme~\eqref{eq:convStep} is
  conservative~\cite[Chapter~13]{LeVeque}, we have
  \begin{displaymath}
    \sum_{j=1}^M \norma{\rho^{n+1/2}_j}_{\L1(\reali)}
    =
    \sum_{j=1}^M \norma{\rho_{o,j}}_{\L1(\reali)}.
  \end{displaymath}
  The positivity of $\brho_\Delta$ and the fact that the source terms
  sum up to $0$ when considering the relaxation step
  in~\eqref{eq:relaxStep} yields the thesis:
  \begin{displaymath}
    \sum_{j=1}^M \norma{\rho^{n+1}_j}_{\L1(\reali)}
    =
    \sum_{j=1}^M \norma{\rho^{n+1/2}_j}_{\L1(\reali)}
    =
    \sum_{j=1}^M \norma{\rho_{o,j}}_{\L1(\reali)}.
  \end{displaymath}
\end{proof}

\subsection{\texorpdfstring{$\BV$}{BV} estimates}
\label{sec:BV}

We first prove the Lipschitz continuity of the source
term~\eqref{eq:source} in each of its argument.
\begin{lemma}
  \label{lem:Lipsource}
  For all $j=1, \ldots, M$, the map $S_j$ defined in~\eqref{eq:source}
  is Lipschitz continuous in each argument with Lipschitz constant
  \begin{equation}\label{eq:lipschitzconstant}
    \mathcal{K}= \max\{ V_{\mathrm{max}},2 \, V'_{\mathrm{max}}\},
  \end{equation} where $V_{\mathrm{max}}$ and $V'_{\mathrm{max}}$ are defined
  in~\eqref{eq:vmax} and~\eqref{eq:vdiffmax} respectively.
\end{lemma}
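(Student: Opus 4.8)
The plan is to work directly from the explicit formula~\eqref{eq:source} for $S_j$, treating it as a sum of two terms of the form $(v_{j+1}(R_{j+1}) - v_j(R_j))^\pm$ multiplied by a product of a density and a $(1-\text{density})$ factor, and to estimate the variation of $S_j$ when each of its four arguments $\rho_j, \rho_{j+1}, R_j, R_{j+1}$ is perturbed one at a time. I will repeatedly use that all four arguments live in $[0,1]$ (so the density and $(1-\text{density})$ factors are bounded by $1$), that $v_j$ is Lipschitz on $[0,1]$ with constant $\le V'_{\mathrm{max}}$, that $\norma{v_j}_{\L\infty([0,1])} \le V_{\mathrm{max}}$, and the elementary fact that $s \mapsto s^+$ and $s \mapsto s^-$ are $1$-Lipschitz, together with $|a^+ - b^+| \le |a-b|$, etc.

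First I would handle the dependence on $\rho_j$ with $\rho_{j+1}, R_j, R_{j+1}$ fixed. In the representation on the second line of~\eqref{eq:source}, the coefficient $(v_{j+1}(R_{j+1}) - v_j(R_j))^+$ is fixed and bounded by $V_{\mathrm{max}}$ (since $v_{j+1}(R_{j+1}) \in [0, V_{\mathrm{max}}]$), and it multiplies $\rho_j(1-\rho_{j+1})$, which is Lipschitz in $\rho_j$ with constant $(1-\rho_{j+1}) \le 1$; similarly the term $(v_{j+1}(R_{j+1}) - v_j(R_j))^- \rho_{j+1}(1-\rho_j)$ has fixed coefficient $\le V_{\mathrm{max}}$ multiplying $(1-\rho_j)$, again $1$-Lipschitz in $\rho_j$. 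Adding the two contributions gives a Lipschitz constant $\le V_{\mathrm{max}}$ in $\rho_j$; the estimate in $\rho_{j+1}$ is symmetric. For the dependence on $R_j$ (with the other three arguments fixed), only the factor $v_{j+1}(R_{j+1}) - v_j(R_j)$ moves; a change $\Delta R_j$ changes $v_j(R_j)$ by at most $V'_{\mathrm{max}} |\Delta R_j|$, hence changes each of $(\cdot)^+$ and $(\cdot)^-$ by at most $V'_{\mathrm{max}}|\Delta R_j|$, and these are multiplied by density-type factors bounded by $1$. Summing the two terms yields a Lipschitz constant $\le 2\,V'_{\mathrm{max}}$ in $R_j$, and the same for $R_{j+1}$. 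Taking the maximum over the four arguments gives the claimed constant $\mathcal{K} = \max\{V_{\mathrm{max}}, 2\,V'_{\mathrm{max}}\}$.

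The only mild subtlety — and the one point worth stating carefully — is that the case distinction in~\eqref{eq:source} is continuous across the switching surface $v_{j+1}(R_{j+1}) = v_j(R_j)$, so one cannot naively differentiate; the clean way is to use the second, case-free representation of $S_j$ in terms of $(\cdot)^+$ and $(\cdot)^-$ and the $1$-Lipschitz continuity of the positive/negative part maps, which makes all the bounds above uniform in the sign configuration. I do not expect any real obstacle here: once the case-free form is used, every estimate is a one-line application of "bounded factor times Lipschitz factor" and the triangle inequality. The proof is routine modulo bookkeeping, and I would present it by fixing, say, the perturbation in $R_j$ and in $\rho_j$ explicitly and remarking that the remaining two are analogous by symmetry.
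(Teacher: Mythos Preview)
Your proposal is correct, and for the two local arguments $\rho_j,\rho_{j+1}$ it coincides with the paper's proof almost line by line: both telescope one variable at a time and use that the fixed coefficient $(v_{j+1}(R_{j+1})-v_j(R_j))^\pm$ is bounded by $V_{\mathrm{max}}$ while the density factors are $1$-Lipschitz. (Your ``adding the two contributions gives $\le V_{\mathrm{max}}$'' is justified by $(\cdot)^+ + (\cdot)^- = |v_{j+1}(R_{j+1})-v_j(R_j)| \le V_{\mathrm{max}}$; it would be worth making that explicit.)

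Where you genuinely diverge from the paper is in the treatment of the nonlocal arguments $R_j,R_{j+1}$. The paper fixes the other three variables and then splits into four cases according to the signs of $v_{j+1}(R_{j+1})-v_j(R_j)$ and $v_{j+1}(R_{j+1})-v_j(\tilde R_j)$; the mixed-sign cases require an add-and-subtract trick and a separate estimate to reach $2V'_{\mathrm{max}}$. Your route bypasses all of this by working directly with the case-free representation and the inequalities $|a^+-b^+|\le|a-b|$, $|a^--b^-|\le|a-b|$, which immediately give $V'_{\mathrm{max}}|R_j-\tilde R_j|$ for each of the two summands and hence $2V'_{\mathrm{max}}$ after the triangle inequality. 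Your argument is shorter, uniform in the sign configuration, and arrives at the same constant; the paper's case analysis buys nothing extra here.
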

\begin{proof}
  For $j\in \{1,\dots,M-1\}$ we have
  \begin{align}
    & \modulo{S_j(\rho_j,\rho_{j+1},R_j,R_{j+1})- S_j(\tilde \rho_j,\tilde \rho_{j+1},\tilde R_j,\tilde R_{j+1})}\notag\\
    \leq  \
    & \modulo{S_j(\rho_j,\rho_{j+1},R_j,R_{j+1})- S_j(\tilde \rho_j,\rho_{j+1},R_j,R_{j+1})} \label{eq:lipschitz1}\\
    & + \modulo{S_j(\tilde \rho_j,\rho_{j+1},R_j,R_{j+1})- S_j(\tilde \rho_j,\tilde \rho_{j+1},R_j,R_{j+1})}
      \label{eq:lipschitz2}\\
    & + \modulo{S_j(\tilde \rho_j,\tilde \rho_{j+1},R_j,R_{j+1})- S_j(\tilde \rho_j,\tilde \rho_{j+1},\tilde R_j,R_{j+1})}
      \label{eq:lipschitz3}\\
    & + \modulo{S_j(\tilde \rho_j,\tilde \rho_{j+1},\tilde R_j,R_{j+1})- S_j(\tilde \rho_j,\tilde \rho_{j+1},\tilde R_j,\tilde R_{j+1})}.
      \label{eq:lipschitz4}
  \end{align}
  By the definition of the source term~\eqref{eq:source} we have
  \begin{align*}
    [\eqref{eq:lipschitz1}]= \
    & \modulo{(v_{j+1}(R_{j+1})-v_j(R_j))^+(1-\rho_{j+1})(\rho_j-\tilde \rho_j)-(v_{j+1}(R_{j+1})-v_j(R_j))^- \rho_{j+1} (\tilde \rho_j -\rho_j)}\\
    & \leq V_{\max} \modulo{\rho_j-\tilde \rho_j},
    \\
    [\eqref{eq:lipschitz2}] \leq \
    & V_{\max} \modulo{\rho_{j+1}-\tilde \rho_{j+1}}.
  \end{align*}
  Pass now to~\eqref{eq:lipschitz3}:
  \begin{align*}
    [\eqref{eq:lipschitz3}] = \
    & \left|\left((v_{j+1}(R_{j+1})-v_j(R_j))^+-(v_{j+1}(R_{j+1})-v_j(\tilde R_j))^+\right)\tilde \rho_j (1-\tilde \rho_{j+1})\right.\\
    & \left.-\left((v_{j+1}(R_{j+1})-v_j(R_j))^--(v_{j+1}(R_{j+1})-v_j(\tilde R_j))^-\right)\tilde \rho_{j+1} (1-\tilde \rho_{j})\right| .
  \end{align*}
  We distinguish the following cases:
  \begin{center}
    \begin{tabular}{c|c|c}
      & $v_{j+1} (R_{j+1}) \geq v_j (R_{j})$
      & $v_j (R_{j+1}) < v_j (R_{j}) $\\
      \hline
      $v_{j+1} (R_{j+1}) \geq v_j (\tilde R_{j})$
      &
        \bf{Case A} & \bf{Case B}
      \\
      \hline
      $v_{j+1} (R_{j+1}) < v_j (\tilde R_{j})$ & \bf{Case C} & \bf{Case D}
    \end{tabular}
  \end{center}
  We analyse in detail cases A and B, the others being entirely
  similar.
  \paragraph{Case A.} We have
  \begin{displaymath}
    [\eqref{eq:lipschitz3}] =
    \modulo{\left(v_j(\tilde R_j)-v_j(R_j)\right)\tilde \rho_j (1-\tilde \rho_{j+1})}
    \leq V'_{\max} \modulo{R_j-\tilde R_j}.
  \end{displaymath}
  \paragraph{Case B.} Add and subtract
  $(v_{j+1}(R_{j+1})-v_j(\tilde R_j))\tilde \rho_{j+1}(1-\tilde
  \rho_j)$ inside the absolute value in~\eqref{eq:lipschitz3} to
  obtain
  \begin{align*}
    [\eqref{eq:lipschitz3}] = \
    &\modulo{(v_j(\tilde R_j)-v_j(R_j))\tilde \rho_{j+1} (1-\tilde \rho_j)
      + (v_{j+1}(R_{j+1})-v_j(\tilde R_j))(\tilde \rho_{j+1}(1-\tilde \rho_j)-\tilde \rho_j(1-\tilde \rho_{j+1}))}\\
    &\leq V'_{\max}\modulo{R_j-\tilde R_j}+ (v_{j+1}(R_{j+1})-v_j(\tilde R_j))\\
    &< V'_{\max} \modulo{R_j-\tilde R_j}+(v_{j}(R_{j})-v_j(\tilde R_j))\\
    &\leq 2 \, V'_{\max}\modulo{R_j-\tilde R_j},
  \end{align*}
  since $v_{j+1} (R_{j+1}) < v_j (R_j)$ and
  $\modulo{\tilde \rho_{j+1} - \tilde \rho_j}\leq 1$, with
  $\tilde \rho_j, \tilde \rho_{j+1} \in [0,1]$.

  Cases D and C are treated similarly to Case A and Case B,
  respectively. Therefore we have
  \begin{displaymath}
    [ \eqref{eq:lipschitz3} ] \leq
    2 \, V'_{\max}\modulo{R_j-\tilde R_j}.
  \end{displaymath}

  The term~\eqref{eq:lipschitz4} is treated analogously
  to~\eqref{eq:lipschitz3}, leading to
  \begin{displaymath}
    [\eqref{eq:lipschitz4}] \leq
    2 \,V'_{\max}\modulo{R_{j+1}-\tilde R_{j+1}}.
  \end{displaymath}
  The proof is completed.
\end{proof}

The Lipschitz continuity of the source term proved in
Lemma~\ref{lem:Lipsource} is one of the key ingredients in order to
prove the following total variation bound on the numerical
approximation.

\begin{proposition}[\bf{$\BV$ estimate in space}]\label{prop:BVspace}
  Let $\brho_o \in (\L1 \cap \BV) (\reali; [0,1]^M)$.
  Assume that the CFL condition~\eqref{eq:CFL} holds. Then, for
  $n=0,\dots, N_T-1$ the following estimate holds
  \begin{equation}
    \label{eq:BVspace}
    \sum_{j=1}^M \sum_{k\in\interi}
    \modulo{\rho_{j,k+1}^{n} - \rho_{j,k}^n}
    \leq
    e^{8 \, t^n \, \mathcal{K}}
    \sum_{j=1}^M  \sum_{k\in\interi}  \modulo{\rho_{j,k+1}^{0} - \rho_{j,k}^0}
    =
    e^{8 \, t^n \, \mathcal{K}}
    \sum_{j=1}^M \tv(\rho_{j}^0).
  \end{equation}
\end{proposition}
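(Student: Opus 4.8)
The plan is to proceed by induction on $n$, tracking how the total spatial variation evolves over one time step of Algorithm~\ref{alg:1}, which splits into the convective step~\eqref{eq:convStep} followed by the relaxation step~\eqref{eq:relaxStep}. For the convective step, the Godunov scheme with the numerical flux~\eqref{eq:numFlux} is a monotone scheme, so under the CFL condition~\eqref{eq:CFL} it is TVD in each lane separately; hence $\sum_{k}|\rho_{j,k+1}^{n+1/2} - \rho_{j,k}^{n+1/2}| \leq \sum_{k}|\rho_{j,k+1}^{n} - \rho_{j,k}^{n}|$ for each $j$, and summing over $j$ gives $\sum_{j}\mathrm{TV}(\rho_j^{n+1/2}) \leq \sum_{j}\mathrm{TV}(\rho_j^{n})$. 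The real work is in the relaxation step.

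For the relaxation step I would write, for each $j$ and $k$,
\begin{displaymath}
  \rho_{j,k+1}^{n+1} - \rho_{j,k}^{n+1}
  = \bigl(\rho_{j,k+1}^{n+1/2} - \rho_{j,k}^{n+1/2}\bigr)
  + \Delta t\,\bigl[\mathcal{S}_{j,k+1} - \mathcal{S}_{j,k}\bigr],
\end{displaymath}
where $\mathcal{S}_{j,k} = S_{j-1}(\rho_{j-1,k}^{n+1/2},\rho_{j,k}^{n+1/2},R_{j-1,k}^{n+1/2},R_{j,k}^{n+1/2}) - S_j(\rho_{j,k}^{n+1/2},\rho_{j+1,k}^{n+1/2},R_{j,k}^{n+1/2},R_{j+1,k}^{n+1/2})$. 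Taking absolute values, summing over $k$, and using Lemma~\ref{lem:Lipsource}, each difference $|S_\ell(\cdots)_{k+1} - S_\ell(\cdots)_{k}|$ is bounded by $\mathcal{K}$ times the sum of the increments in its four arguments across the interface $k$. Since each $S_\ell$ appears in the balance for lane $\ell$ and lane $\ell+1$, and its arguments involve $\rho_\ell,\rho_{\ell+1},R_\ell,R_{\ell+1}$, after summing over all $j$ and $k$ every quantity $\sum_k|\rho_{\ell,k+1}^{n+1/2}-\rho_{\ell,k}^{n+1/2}|$ and $\sum_k|R_{\ell,k+1}^{n+1/2}-R_{\ell,k}^{n+1/2}|$ appears a bounded number of times — I expect the bookkeeping to give a factor like $8$ (two source terms per lane, four arguments each, but $R$-increments controlled by $\rho$-increments), which is where the constant $8$ in~\eqref{eq:BVspace} comes from. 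Crucially, by~\eqref{eq:Rtv} of Lemma~\ref{lem:convOp}, $\sum_k|R_{\ell,k+1}^{n+1/2}-R_{\ell,k}^{n+1/2}| \leq \sum_k|\rho_{\ell,k+1}^{n+1/2}-\rho_{\ell,k}^{n+1/2}|$, so the convolution terms contribute nothing extra beyond a constant multiple of $\mathrm{TV}(\rho_\ell^{n+1/2})$.

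Combining, I would arrive at $\sum_j \mathrm{TV}(\rho_j^{n+1}) \leq (1 + 8\,\Delta t\,\mathcal{K}) \sum_j \mathrm{TV}(\rho_j^{n+1/2}) \leq (1 + 8\,\Delta t\,\mathcal{K}) \sum_j \mathrm{TV}(\rho_j^{n})$, and then iterating over $n$ gives $\sum_j \mathrm{TV}(\rho_j^n) \leq (1 + 8\,\Delta t\,\mathcal{K})^n \sum_j \mathrm{TV}(\rho_j^0) \leq e^{8\,n\,\Delta t\,\mathcal{K}} \sum_j \mathrm{TV}(\rho_j^0) = e^{8\,t^n\,\mathcal{K}}\sum_j\mathrm{TV}(\rho_j^0)$, using $1+x\leq e^x$ and $t^n = n\,\Delta t$. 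The main obstacle is the careful combinatorial accounting in the relaxation step: one must track exactly which lanes' variations each $|S_{\ell,k+1}-S_{\ell,k}|$ term feeds into (the boundary conditions~\eqref{eq:boundaryconditions} handle the edge lanes $j=1$ and $j=M$), make sure to use the invariance $\rho_{j,k}^{n+1/2}\in[0,1]$ from Lemma~\ref{lem:invariance} so that the factors $(1-\rho)$ stay in $[0,1]$, and verify that the total multiplicity of each $\mathrm{TV}(\rho_\ell^{n+1/2})$ after summation does not exceed $8$.
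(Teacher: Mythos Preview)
Your proposal is correct and follows essentially the same approach as the paper: use the TVD property of the Godunov convective step, then bound the relaxation step via the Lipschitz continuity of $S_j$ (Lemma~\ref{lem:Lipsource}) together with the convolution estimate~\eqref{eq:Rtv}, obtaining the one-step growth factor $(1+8\,\Delta t\,\mathcal{K})$ and iterating. The paper's bookkeeping is exactly what you anticipate --- a factor $4\,\Delta t\,\mathcal{K}$ on the $\rho$-increments and another $4\,\Delta t\,\mathcal{K}$ on the $R$-increments, the latter then absorbed via~\eqref{eq:Rtv}; the invariance $\rho_{j,k}^{n+1/2}\in[0,1]$ is already built into the Lipschitz constant of Lemma~\ref{lem:Lipsource} and need not be invoked separately.
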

\begin{proof}
  By~\eqref{eq:relaxStep}, for $j=1,\dots, M$ we have
  \begin{align*}
    & \rho_{j,k+1}^{n+1}-\rho_{j,k}^{n+1}
    \\
    = \
    & \rho_{j,k+1}^{n+1/2} - \rho_{j,k}^{n+1/2}
    \\
    & + \Delta t
      \left[
      S_{j-1}\left(\rho_{j-1, k+1}^{n+1/2}, \rho_{j, k+1}^{n+1/2},
      R_{j-1, k+1}^{n+1/2}, R_{j, k+1}^{n+1/2}\right)
      -
      S_{j-1}\left(\rho_{j-1, k}^{n+1/2}, \rho_{j, k}^{n+1/2},
      R_{j-1, k}^{n+1/2}, R_{j, k}^{n+1/2}\right)
      \right]
    \\
    & - \Delta t \left[
      S_{j}\left(\rho_{j, k+1}^{n+1/2}, \rho_{j+1, k+1}^{n+1/2},
      R_{j, k+1}^{n+1/2}, R_{j+1, k+1}^{n+1/2}\right)
      -
      S_{j}\left(\rho_{j, k}^{n+1/2}, \rho_{j+1, k}^{n+1/2},
      R_{j, k}^{n+1/2}, R_{j+1, k}^{n+1/2}\right)
      \right].
  \end{align*}
  By the Lipschitz continuity of the maps in the source term, see
  Lemma~\ref{lem:Lipsource}, and the properties of the discrete
  convolution operator, see Lemma~\ref{lem:convOp}, we obtain
  \begin{align*}
    &\sum_{j=1}^M \sum_{k\in\interi}
      \modulo{\rho_{j, k+1}^{n+1} - \rho_{j, k}^{n+1}}
    \\
    \leq  \
    & \sum_{j=1}^M \sum_{k\in\interi}
      (1 + 4 \, \Delta t \, \mathcal{K})
      \modulo{\rho_{j, k+1}^{n+1/2} - \rho_{j, k}^{n+1/2}}
      +
      4 \, \Delta t \, \mathcal{K}
      \sum_{j=1}^M \sum_{k\in\interi}
      \modulo{R_{j, k+1}^{n+1/2}-R_{j, k}^{n+1/2}}
    \\
    \leq \
    &
      (1+8 \, \dt \, \mathcal{K})
      \sum_{j=1}^M \sum_{k\in\interi}
      \modulo{\rho_{j, k+1}^{n+1/2}-\rho_{j, k}^{n+1/2}}.
  \end{align*}
  Since the Godunov scheme used in~\eqref{eq:numFlux} is total
  variation diminishing~\cite[Proposition 3.1
  (d)]{crandall1980monotone}, we get
  \begin{equation}
    \label{eq:TVDstep}
    \sum_{j=1}^M \sum_{k\in\interi}
    \modulo{\rho_{j, k+1}^{n+1} - \rho_{j, k}^{n+1}}
    \leq
    (1 + 8 \, \dt \, \mathcal{K})
    \sum_{j=1}^M\sum_{k\in\interi}
    \modulo{\rho_{j, k+1}^{n} - \rho_{j, k}^{n}}
    \leq
    e^{8 \, \Delta t \, \mathcal{K}}
    \sum_{j=1}^M\sum_{k\in\interi}
    \modulo{\rho_{j, k+1}^{n}-\rho_{j, k}^{n}},
  \end{equation}
  which applied recursively yields the thesis.
\end{proof}

\begin{proposition}
  \label{prop:LipTime}
  Let $\brho_o \in (\L1 \cap \BV) (\reali; [0,1]^M)$.
  Assume that the CFL condition~\eqref{eq:CFL} holds. Then, for
  $n=0,\dots, N_T-1$,
  \begin{equation}
    \label{eq:lipTime}
    \Delta x \sum_{j=1}^M \sum_{k\in\interi} \modulo{\rho_{j,k}^{n+1}-\rho_{j,k}^n}
    \leq
    2 \, \dt \left(
      2 \, V_{\max} \norma{\brho_o}_{\L1 (\reali)}
      +
      \mathcal{V} \,  e^{8 \, t^n \, \mathcal{K}}
      \sum_{j=1}^M \tv(\rho_j^0)\right),
  \end{equation}
  with $\mathcal{K}$ as in~\eqref{eq:lipschitzconstant} and
  $\mathcal{V}$ as in~\eqref{eq:VC1}.
\end{proposition}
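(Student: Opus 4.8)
The plan is to split the one–step time increment according to the two stages of Algorithm~\ref{alg:1}, writing
\[
\rho_{j,k}^{n+1}-\rho_{j,k}^n
=\bigl(\rho_{j,k}^{n+1}-\rho_{j,k}^{n+1/2}\bigr)
+\bigl(\rho_{j,k}^{n+1/2}-\rho_{j,k}^n\bigr),
\]
and to estimate separately the \emph{convective} contribution, coming from \eqref{eq:convStep}, and the \emph{relaxation} contribution, coming from \eqref{eq:relaxStep}; the left–hand side of \eqref{eq:lipTime} is then bounded by the sum of the two, and the two terms will match, respectively, the second and the first summand on the right–hand side of \eqref{eq:lipTime}.

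For the convective part, since $\lambda\,\Delta x=\Delta t$, the conservative form \eqref{eq:convStep} gives
\[
\Delta x\sum_{j=1}^M\sum_{k\in\interi}\bigl|\rho_{j,k}^{n+1/2}-\rho_{j,k}^n\bigr|
=\Delta t\sum_{j=1}^M\sum_{k\in\interi}\bigl|F_j(\rho_{j,k}^n,\rho_{j,k+1}^n)-F_j(\rho_{j,k-1}^n,\rho_{j,k}^n)\bigr|.
\]
The numerical flux \eqref{eq:numFlux} is the minimum of a function of its first argument and a function of its second argument, each Lipschitz with constant at most $\norma{f_j'}_{\L\infty([0,1])}\le V_{\max}+V'_{\max}=\mathcal V$; since the $\min$ is $1$-Lipschitz in the sup–norm, $F_j$ is jointly $\mathcal V$-Lipschitz. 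Bounding the summand by $\mathcal V\bigl(|\rho_{j,k}^n-\rho_{j,k-1}^n|+|\rho_{j,k+1}^n-\rho_{j,k}^n|\bigr)$ and telescoping over $k$ produces a factor $2\mathcal V\sum_{j,k}|\rho_{j,k+1}^n-\rho_{j,k}^n|$; the spatial $\BV$ bound of Proposition~\ref{prop:BVspace} then controls this by $2\mathcal V\,e^{8\,t^n\mathcal K}\sum_{j}\tv(\rho_j^0)$, which, multiplied by $\Delta t$, gives the second term of \eqref{eq:lipTime}.

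For the relaxation part, \eqref{eq:relaxStep} gives $\Delta x\sum_{j,k}|\rho_{j,k}^{n+1}-\rho_{j,k}^{n+1/2}|\le\Delta t\,\Delta x\sum_{j,k}\bigl(|S_{j-1}(\cdots)|+|S_j(\cdots)|\bigr)$. From the explicit form \eqref{eq:source}, in either branch one has $|S_j|\le|v_{j+1}(R_{j+1})-v_j(R_j)|\,\max\{\rho_j,\rho_{j+1}\}\le V_{\max}(\rho_j+\rho_{j+1})$, using $v_i\in[0,V_{\max}]$ and $0\le\rho_i\le1$. Summing over $j=1,\dots,M$ and using the boundary conditions $S_0=S_M=0$, one gets $\sum_{j}(|S_{j-1}|+|S_j|)=2\sum_{j=1}^{M-1}|S_j|\le 4V_{\max}\sum_{j=1}^M\rho_j$; multiplying by $\Delta x$, summing over $k$ at time level $n+1/2$, and invoking the mass conservation of Lemma~\ref{lem:cons} (so that $\Delta x\sum_{j,k}\rho_{j,k}^{n+1/2}=\norma{\brho_o}_{\L1(\reali)}$) turns this into $4V_{\max}\Delta t\,\norma{\brho_o}_{\L1(\reali)}$, the first term of \eqref{eq:lipTime}. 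Adding the two contributions gives exactly $2\Delta t\bigl(2V_{\max}\norma{\brho_o}_{\L1(\reali)}+\mathcal V\,e^{8\,t^n\mathcal K}\sum_j\tv(\rho_j^0)\bigr)$. I do not expect a genuine obstacle here: the only points requiring a little care are checking that the joint Lipschitz constant of the Godunov flux is indeed $\mathcal V$ and carrying out the telescoping over lanes that yields the factor $4V_{\max}$; the proposition is essentially the combination of the conservative structure of the scheme, the pointwise bound on the source, and the two previously established estimates (Proposition~\ref{prop:BVspace} and Lemma~\ref{lem:cons}).
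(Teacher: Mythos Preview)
Your proposal is correct and follows essentially the same approach as the paper: the same splitting into convective and relaxation contributions, the same Lipschitz bound $\mathcal V$ for the Godunov flux combined with Proposition~\ref{prop:BVspace}, and the same pointwise bound $|S_j|\le V_{\max}(\rho_j+\rho_{j+1})$ combined with Lemma~\ref{lem:cons}. Your telescoping over lanes via $\sum_{j=1}^M(|S_{j-1}|+|S_j|)=2\sum_{j=1}^{M-1}|S_j|$ is in fact slightly more explicit than the paper's treatment, but the argument is the same.
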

\begin{proof}
  Observe that
  \begin{displaymath}
    \modulo{\rho^{n+1}_{j,k} - \rho^n_{j,k}}
    \leq
    \modulo{\rho^{n+1}_{j,k} - \rho^{n+1/2}_{j,k}}
    +
    \modulo{\rho^{n+1/2}_{j,k} - \rho^n_{j,k}}.
  \end{displaymath}
  We then estimate separately each term on the right hand side of the
  inequality above.

  By the relaxation step~\eqref{eq:relaxStep} we have
  \begin{displaymath}
    \modulo{\rho^{n+1}_{j,k} - \rho^{n+1/2}_{j,k}}
    =
    \dt \modulo{
      S_{j-1}\left( \rho_{j-1,k}^{n+1/2}, \rho_{j,k}^{n+1/2},
        R_{j-1,k}^{n+1/2}, R_{j,k}^{n+1/2}\right)
      -
      S_{j}\left( \rho_{j,k}^{n+1/2}, \rho_{j+1,k}^{n+1/2},
        R_{j,k}^{n+1/2}, R_{j+1,k}^{n+1/2}\right)
    }.
  \end{displaymath}
  It is easy to see that the numerical source term
  $S_j$~\eqref{eq:source}--\eqref{eq:boundaryconditions} satisfies,
  for $j=1, \ldots, M$,
  \begin{equation}
    \label{eq:boundSource}
    \modulo{S_{j}\left(
        \rho_{j,k}^{n+1/2},\rho_{j+1,k}^{n+1/2},R_{j,k}^{n+1/2},R_{j+1,k}^{n+1/2}
      \right)}
    \leq
    V_{\max}\left(\rho_{j,k}^{n+1/2} + \rho_{j+1,k}^{n+1/2}\right).
  \end{equation}
  Thus,
  \begin{equation}
    \label{eq:17}
    \modulo{\rho^{n+1}_{j,k} - \rho^{n+1/2}_{j,k}}
    \leq
    \dt \, V_{\max} \left(
      \rho_{j-1,k}^{n+1/2}
      +
      2 \, \rho_{j-1,k}^{n+1/2}
      +
      \rho_{j-1,k}^{n+1/2}
    \right).
  \end{equation}

  By the convective step~\eqref{eq:convStep}, since the numerical flux
  defined in~\eqref{eq:numFlux} is Lipschitz continuous in both
  arguments with Lipschitz constant $\mathcal{V}$~\eqref{eq:VC1}, we
  have
  \begin{align}
    \nonumber
    \modulo{\rho^{n+1/2}_{j,k} - \rho^n_{j,k}}
    = \
    & \lambda \modulo{F_j\left(\rho_{j,k}^n, \rho_{j,k+1}^n\right)
      -
      F_j\left(\rho_{j,k-1}^n, \rho_{j,k}^n\right)}
    \\
    \label{eq:18}
    \leq \
    & \lambda \, \mathcal{V} \left(
      \modulo{\rho_{j,k}^n- \rho_{j,k-1}^n}
      +
      \modulo{\rho_{j,k+1}^n- \rho_{j,k}^n}
      \right).
  \end{align}

  Collecting together~\eqref{eq:17} and~\eqref{eq:18} and exploiting
  Lemma~\ref{lem:cons} and Proposition~\ref{prop:BVspace} yields
  \begin{align*}
    \dx \sum_{j=1}^M \sum_{k \in \interi}
    \modulo{\rho^{n+1}_{j,k} - \rho^n_{j,k}}
    \leq \
    & \dt \,  V_{\max}\,
      \sum_{j=1}^M \sum_{k \in \interi}
      4 \norma{\rho_{j}^{n+1/2}}_{\L1 (\reali)}
      +
      2 \, \dt \,\mathcal{V}
      \sum_{j=1}^M \sum_{k \in \interi}
      \modulo{\rho_{j,k}^n- \rho_{j,k-1}^n}
    \\
    \leq \
    & 2 \, \dt \left(
      2 \, V_{\max} \norma{\brho_o}_{\L1 (\reali)}
      +
      \mathcal{V}\,  e^{8 \, t^n \, \mathcal{K}}
      \sum_{j=1}^M \tv(\rho_j^0)\right).
  \end{align*}
\end{proof}

Using the estimates provided by Propositions~\ref{prop:BVspace}
and~\ref{prop:LipTime}, we obtain the following $\BV$ estimate in
space and time.

\begin{corollary}[\bf{$\BV$ estimate in space and time}]\label{cor:BVspaceTime}
  Let $\brho_o \in (\L1 \cap \BV) (\reali; [0,1]^M)$.
  Assume that the CFL condition~\eqref{eq:CFL} holds. Then, for all
  $n=1, \ldots, N_T$, the following estimate holds
  \begin{align*}
    & \sum_{m=0}^{n-1} \sum_{j=1}^M
      \sum_{k\in\interi} \left(
      \Delta t
      \modulo{\rho_{j,k+1}^{m}-\rho_{j,k}^m}
      + \Delta x \modulo{\rho_{j,k}^{m+1}-\rho_{j,k}^m}\right)
    \\
    \leq \
    &
      n \, \dt \, e^{8 \, t^n \, \mathcal{K}}
      \left(\left(2 \, \mathcal{V} + 1 \right)
      \sum_{j=1}^M \tv(\rho_{j}^0)
      +
      4 \, V_{\max}\sum_{j=1}^M
      \norma{\rho_{o,j}}_{\L1(\reali)}\right).
  \end{align*}
\end{corollary}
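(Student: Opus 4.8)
The plan is to combine the two estimates already proven, namely the space $\BV$ bound of Proposition~\ref{prop:BVspace} and the time continuity bound of Proposition~\ref{prop:LipTime}, by summing the latter over the time index $m=0,\dots,n-1$. Concretely, first I would rewrite the desired left-hand side as the sum over $m$ of two pieces: a ``space part'' $\sum_{j,k} \dt\,\modulo{\rho_{j,k+1}^m - \rho_{j,k}^m}$ and a ``time part'' $\sum_{j,k}\dx\,\modulo{\rho_{j,k}^{m+1}-\rho_{j,k}^m}$. For the space part, multiply the estimate~\eqref{eq:BVspace} by $\dt$ and observe that $e^{8\,t^m\,\mathcal{K}} \leq e^{8\,t^n\,\mathcal{K}}$ for $m \leq n$, so each of the $n$ summands is bounded by $\dt\,e^{8\,t^n\,\mathcal{K}}\sum_j \tv(\rho_j^0)$, giving a total of $n\,\dt\,e^{8\,t^n\,\mathcal{K}}\sum_j\tv(\rho_j^0)$.

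For the time part, apply Proposition~\ref{prop:LipTime} directly, which already bounds $\dx\sum_{j,k}\modulo{\rho_{j,k}^{m+1}-\rho_{j,k}^m}$ by $2\,\dt\bigl(2\,V_{\max}\norma{\brho_o}_{\L1}+\mathcal{V}\,e^{8\,t^m\,\mathcal{K}}\sum_j\tv(\rho_j^0)\bigr)$. Again using monotonicity of the exponential in $m$, each summand is at most $2\,\dt\bigl(2\,V_{\max}\norma{\brho_o}_{\L1}+\mathcal{V}\,e^{8\,t^n\,\mathcal{K}}\sum_j\tv(\rho_j^0)\bigr)$, so summing over the $n$ values of $m$ yields $n\,\dt\bigl(4\,V_{\max}\norma{\brho_o}_{\L1}+2\,\mathcal{V}\,e^{8\,t^n\,\mathcal{K}}\sum_j\tv(\rho_j^0)\bigr)$. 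Since $e^{8\,t^n\,\mathcal{K}}\geq 1$, the term $4\,V_{\max}\norma{\brho_o}_{\L1}$ can be absorbed under a factor $e^{8\,t^n\,\mathcal{K}}$.

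Adding the space and time contributions then gives
\begin{displaymath}
  n\,\dt\,e^{8\,t^n\,\mathcal{K}}\Bigl(\sum_{j=1}^M\tv(\rho_j^0) + 2\,\mathcal{V}\sum_{j=1}^M\tv(\rho_j^0)\Bigr)
  + n\,\dt\,e^{8\,t^n\,\mathcal{K}}\,4\,V_{\max}\sum_{j=1}^M\norma{\rho_{o,j}}_{\L1(\reali)},
\end{displaymath}
which, upon collecting the $\tv$ terms, is exactly the claimed bound
$n\,\dt\,e^{8\,t^n\,\mathcal{K}}\bigl((2\,\mathcal{V}+1)\sum_j\tv(\rho_j^0)+4\,V_{\max}\sum_j\norma{\rho_{o,j}}_{\L1}\bigr)$.

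There is essentially no serious obstacle here: the corollary is a bookkeeping consequence of the two preceding propositions, and the only points requiring a word of justification are the monotonicity $e^{8\,t^m\,\mathcal{K}}\leq e^{8\,t^n\,\mathcal{K}}$ for $m\leq n$ (immediate since $t^m = m\,\dt$ is increasing) and the trivial estimate $1\leq e^{8\,t^n\,\mathcal{K}}$ used to homogenize the mass term. The mildly delicate part, if any, is simply being careful that the summation over $m$ runs from $0$ to $n-1$ so there are exactly $n$ terms, matching the prefactor $n\,\dt$ in the statement.
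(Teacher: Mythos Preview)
Your proposal is correct and follows exactly the approach the paper intends: the corollary is stated in the paper without an explicit proof, merely as a consequence of Propositions~\ref{prop:BVspace} and~\ref{prop:LipTime}, and your argument---summing the two estimates over $m=0,\dots,n-1$, using monotonicity of $e^{8\,t^m\,\mathcal{K}}$ in $m$, and absorbing the mass term via $1\le e^{8\,t^n\,\mathcal{K}}$---is precisely the routine bookkeeping that fills in this gap.
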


\subsection{Discrete entropy inequality}
\label{sec:entropyIneq}

We derive a discrete entropy inequality for the approximate solution
$\brho_\Delta$ constructed through Algorithm~\ref{alg:1}. The proof is
entirely similar to~\cite[Lemma~2.7]{GoatinRossi}, with the
simplification that now the flux does not depend on the spatial
variable.

Define, for each $c\in [0,1]$ and $j= 1 , \ldots, M$, the Kru\v zkov
numerical entropy flux as
\begin{displaymath}
  \mathscr{F}_j^c (u,w)
  =
  F_j(u \vee c, w \vee c) - F_j (u \wedge c, w\wedge c),
\end{displaymath}
where $a \vee b = \max\{a,b\}$ and $a \wedge b = \min\{a, b\}$.

\begin{lemma}
  \label{lemma:discreteEntropy}
  Let $\brho_o \in (\L1 \cap \BV) (\reali; [0,1]^M)$. Assume that the
  CFL condition~\eqref{eq:CFL} holds. Then, the approximate solution
  $\brho_\Delta$ constructed by Algorithm~\ref{alg:1} satisfies the
  following discrete entropy inequality: for all $j=1, \ldots, M$, for
  $k \in \interi$, for $n=0, \ldots, N_T-1$ and for any $c\in [0,1]$,
  \begin{align}
    \nonumber
    \modulo{\rho_{j,k}^{n+1} - c}
    -
    \modulo{\rho_{j,k}^{n} - c}
    + \lambda \left(
    \mathscr{F}_j^c\left(\rho_{j,k}^n, \rho_{j,k+1}^n\right)
    -
    \mathscr{F}_j^c\left(\rho_{j,k-1}^n, \rho_{j,k}^n\right)
    \right)
    \\
    \label{eq:8}
    - \dt \, \sgn\left(\rho_{j,k}^{n+1} - c\right)
    \left(
    S_{j-1}\left(\rho_{j-1,k}^{n+1/2}, \rho_{j,k}^{n+1/2}, R_{j-1,k}^{n+1/2}, R_{j,k}^{n+1/2}\right)\right.
    \\
    \nonumber
    \left.
    -
    S_{j}\left(\rho_{j,k}^{n+1/2}, \rho_{j+1,k}^{n+1/2}, R_{j,k}^{n+1/2}, R_{j+1,k}^{n+1/2}\right)
    \right)
 & \leq 0.
  \end{align}
\end{lemma}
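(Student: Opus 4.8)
The plan is to reduce the combined convective–relaxation step to a composition of two simpler steps and prove a discrete entropy inequality for each. First I would handle the convective step \eqref{eq:convStep} in isolation: since $F_j$ in \eqref{eq:numFlux} is the Godunov numerical flux, it is monotone (nondecreasing in the first argument, nonincreasing in the second) under the CFL condition \eqref{eq:CFL}, so the classical monotone-scheme argument (see \cite[Proposition~3.1]{crandall1980monotone} and the Crandall–Majda cell entropy inequality) yields, for every $c \in [0,1]$,
\begin{equation*}
  \modulo{\rho_{j,k}^{n+1/2} - c}
  -
  \modulo{\rho_{j,k}^{n} - c}
  + \lambda \left(
  \mathscr{F}_j^c\left(\rho_{j,k}^n, \rho_{j,k+1}^n\right)
  -
  \mathscr{F}_j^c\left(\rho_{j,k-1}^n, \rho_{j,k}^n\right)
  \right)
  \leq 0 .
\end{equation*}
The standard way to see this is to write $H_j(u,v,w) = v - \lambda\bigl(F_j(v,w) - F_j(u,v)\bigr)$ for the convective update, observe that $H_j$ is monotone in each of its three arguments on $[0,1]^3$ thanks to \eqref{eq:CFL}, note $H_j(c,c,c) = c$, and then use $H_j(u\vee c, v\vee c, w\vee c) \geq H_j(u,v,w) \vee H_j(c,c,c) = \rho_{j,k}^{n+1/2} \vee c$ together with the symmetric lower bound for $\wedge c$; subtracting the two and using $|a| = a\vee c + a \wedge c - 2c$ (with the bookkeeping $|a-c| = (a\vee c) - (a \wedge c)$) produces exactly the telescoping numerical entropy flux $\mathscr{F}_j^c$.

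Next I would treat the relaxation step \eqref{eq:relaxStep}. Writing $b_{j,k} := S_{j-1}(\cdots) - S_{j}(\cdots)$ for the (bounded) source contribution evaluated at the half-step values, the update is simply $\rho_{j,k}^{n+1} = \rho_{j,k}^{n+1/2} + \dt\, b_{j,k}$. For any $c$, the elementary inequality $\modulo{x + \dt\,b - c} \leq \modulo{x - c} + \dt\,\sgn(x + \dt\,b - c)\,b$ — which holds because $|y| \leq |z| + \sgn(y)(y-z)$ for all real $y,z$, applied with $y = x+\dt b - c$ and $z = x - c$ — gives directly
\begin{equation*}
  \modulo{\rho_{j,k}^{n+1} - c}
  \leq
  \modulo{\rho_{j,k}^{n+1/2} - c}
  + \dt\,\sgn\!\left(\rho_{j,k}^{n+1} - c\right) b_{j,k}.
\end{equation*}
Here it is essential that the $\sgn$ is evaluated at $\rho_{j,k}^{n+1}$, matching exactly the form in \eqref{eq:8}; this is why the statement has $\sgn(\rho_{j,k}^{n+1}-c)$ rather than $\sgn(\rho_{j,k}^{n+1/2}-c)$.

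Finally I would combine the two: add the convective-step entropy inequality (in the $\rho^{n+1/2}$–$\rho^n$ form) to the relaxation-step inequality (which bounds $|\rho^{n+1}_{j,k}-c|$ by $|\rho^{n+1/2}_{j,k}-c|$ plus the source term); the intermediate $|\rho^{n+1/2}_{j,k}-c|$ terms cancel, and one is left precisely with \eqref{eq:8}. I do not anticipate a serious obstacle here — the argument is a routine adaptation of \cite[Lemma~2.7]{GoatinRossi}, and indeed simpler since $F_j$ has no explicit $x$-dependence. The only points requiring mild care are: verifying that the CFL condition \eqref{eq:CFL} is exactly what makes $H_j$ monotone on $[0,1]^3$ (the factor $\tfrac12$ comes from the two-sided flux differences, while $\mathcal{V}$ in \eqref{eq:VC1} bounds the Lipschitz constant of $F_j$, see \eqref{eq:18}), and keeping the $\sgn(\rho_{j,k}^{n+1}-c)$ placement consistent so that the final inequality is stated with the correct argument. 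One should also note that, by Lemma~\ref{lem:invariance}, all the $\rho_{j,k}^{m}$ and $\rho_{j,k}^{m+1/2}$ lie in $[0,1]$, so that evaluating everything for $c \in [0,1]$ and using the monotonicity on $[0,1]^3$ is legitimate.
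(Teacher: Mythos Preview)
Your proposal is correct and follows exactly the standard route the paper indicates by citing \cite[Lemma~2.7]{GoatinRossi}: the Crandall--Majda monotone-scheme argument for the convective step~\eqref{eq:convStep}, the elementary inequality $|y|\leq|z|+\sgn(y)(y-z)$ for the relaxation step~\eqref{eq:relaxStep}, and then cancellation of $|\rho_{j,k}^{n+1/2}-c|$. The paper omits the details entirely, so your write-up is in fact more complete than what appears there.
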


\subsection{Convergence}
\label{sec:conv}

The results obtained in the preceding sections, namely
Lemma~\ref{lem:invariance} for the invariance of the set $[0,1]^M$ and
Corollary~\ref{cor:BVspaceTime} for the total variation bound in space
and time, allow to apply Helly's compactness theorem, which ensures
the existence of a subsequence of $\brho_\Delta$ converging in $\L1$
to a function $\brho \in \L\infty([0,T] \times \reali; [0,1]^M)$, with
the additional property of preserving the initial mass, that is
$\norma{\brho(t)}_{\L1(\reali)}=\norma{\brho_o}_{\L1(\reali)}$ for
$t \in [0,T]$. Moreover, Proposition~\ref{prop:LipTime} and in
particular formula~\eqref{eq:lipTime}, imply that
$\brho \in \C0([0, T];\L1 (\reali; [0, 1]^M))$.

The limit function $\brho$ is a solution to problem~\eqref{eq:model}
in the sense of Definition~\ref{def:sol}. Indeed, the weak
formulation, i.e.~the integral equality in the first part of
Definition~\ref{def:sol}, follows from a Lax--Wendroff type
calculation~\cite[Theorem~12.1]{LeVeque}, and the presence of the
source terms does not add any difficulty in the proof.

Concerning the entropy inequality in the second part of
Definition~\ref{def:sol}, rather standard computations starting from
the discrete entropy inequality in Lemma~\ref{lemma:discreteEntropy}
yield the desired result.

\section{Uniqueness of solutions: \texorpdfstring{$\L1$}{L1}
  contractivity}
\label{sec:unique}

As for the \emph{local} model~\cite{HoldenRisebro}, the special form
of the source terms implies the $\L1$-contractivity of the solution
to~\eqref{eq:model}. In particular, this results guarantees uniqueness
of solutions to problem~\eqref{eq:model}.

\begin{theorem}
  \label{thm:contract}
  Let $\brho$ and $\bpi$ be two solutions to problem~\eqref{eq:model}
  in the sense of Definition~\ref{def:sol}, with initial data
  $\brho_o, \, \bpi_o \in (\L1 \cap \BV)(\reali; [0,1]^M),$
  respectively.
  Then, for a.e.~$t \in [0,T]$,
  \begin{equation}
    \label{eq:9}
    \sum_{j=1}^M \norma{\rho_j (t) - \pi_j (t)}_{\L1 (\reali)}
    \leq
    \sum_{j=1}^M \norma{\rho_{j,o} - \pi_{j,o}}_{\L1 (\reali)}.
  \end{equation}
\end{theorem}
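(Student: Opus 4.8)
The standard route to an $\L1$-contraction estimate for entropy solutions of balance laws is the doubling-of-variables technique of Kru\v zkov, adapted to handle the source terms. First I would fix the two entropy solutions $\brho$ and $\bpi$ and, for each $j$, use the Kru\v zkov entropy $\eta(u) = \modulo{u - c}$ with the corresponding flux $q_j(u,c) = \sgn(u-c)\,(f_j(u) - f_j(c))$ in the entropy inequality from Definition~\ref{def:sol}. The key point is that within lane $j$, the flux $f_j(u) = u\,v_j(u)$ depends only on $\rho_j$ and not on the spatial variable, so the convective part of the doubling argument is exactly the classical one and produces, after choosing the usual test function concentrating on the diagonal $x = y$, $t = s$, the inequality
\begin{displaymath}
  \frac{\d{}}{\d t}\sum_{j=1}^M \norma{\rho_j(t) - \pi_j(t)}_{\L1(\reali)}
  \leq
  \sum_{j=1}^M \int_\reali \sgn\!\left(\rho_j - \pi_j\right)
  \bigl(\Sigma_j[\brho] - \Sigma_j[\bpi]\bigr) \d{x},
\end{displaymath}
where $\Sigma_j[\brho] = S_{j-1}(\rho_{j-1},\rho_j,R_{j-1},R_j) - S_j(\rho_j,\rho_{j+1},R_j,R_{j+1})$ is the total source in lane $j$. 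So the whole problem reduces to controlling this source-difference term by $\sum_j \norma{\rho_j - \pi_j}_{\L1}$ and then invoking Gr\"onwall.

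The heart of the matter is therefore the following algebraic/structural claim: after summing over $j$, the right-hand side above is $\leq 0$, or at worst bounded by a constant times $\sum_j \norma{\rho_j - \pi_j}_{\L1}$. Here I would exploit the special antisymmetric structure of the source: the flow $S_j$ out of lane $j$ into lane $j+1$ appears with a plus sign in the equation for $\rho_{j+1}$ and a minus sign in the equation for $\rho_j$. Writing $T_j = \sum_j \int_\reali \sgn(\rho_j - \pi_j)\bigl(S_j[\brho] - S_j[\bpi]\bigr)\d{x}$ appropriately reindexed, the contribution of each interface $j$ combines into
\begin{displaymath}
  \int_\reali \bigl(\sgn(\rho_{j+1} - \pi_{j+1}) - \sgn(\rho_j - \pi_j)\bigr)
  \bigl(S_j(\rho_j,\rho_{j+1},R_j,R_{j+1}) - S_j(\pi_j,\pi_{j+1},R_j^\pi,R_{j+1}^\pi)\bigr)\d{x}.
\end{displaymath}
Following the local model of~\cite{HoldenRisebro}, one checks pointwise, by a case distinction on the signs of $\rho_j - \pi_j$ and $\rho_{j+1} - \pi_{j+1}$ and on the sign of $v_{j+1} - v_j$, that the integrand is $\leq 0$ — the monotonicity built into the form $S_j = (v_{j+1}-v_j)^+\rho_j(1-\rho_{j+1}) - (v_{j+1}-v_j)^-\rho_{j+1}(1-\rho_j)$ is exactly designed so that moving mass toward the faster lane is consistent with the $\L1$ ordering. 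The new difficulty compared to~\cite{HoldenRisebro} is the nonlocal dependence through $R_j = \rho_j * w_\nds$ versus $R_j^\pi = \pi_j * w_\nds$: the velocity arguments no longer match, so the sign argument does not close by itself. To handle this I would add and subtract $S_j(\pi_j, \pi_{j+1}, R_j, R_{j+1})$, bound the resulting remainder using the Lipschitz continuity of $S_j$ in its last two arguments (Lemma~\ref{lem:Lipsource}) by $2\mathcal{K}\bigl(\modulo{R_j - R_j^\pi} + \modulo{R_{j+1} - R_{j+1}^\pi}\bigr)$, and then integrate in $x$ and use the $\L1$-contractivity of the convolution (the continuous analogue of~\eqref{eq:R-2}, i.e.~Young's inequality with $\norma{w_\nds}_{\L1} = 1$) to get $\norma{R_j - R_j^\pi}_{\L1} \leq \norma{\rho_j - \pi_j}_{\L1}$. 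This produces a term $C \sum_j \norma{\rho_j - \pi_j}_{\L1}$ on the right-hand side with $C$ depending only on $M$, $V_{\mathrm{max}}$, $V'_{\mathrm{max}}$, while the "matched-velocity" part stays $\leq 0$ by the pointwise sign check.

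Putting the pieces together, I arrive at $\frac{\d{}}{\d t}\sum_j \norma{\rho_j(t) - \pi_j(t)}_{\L1} \leq C \sum_j \norma{\rho_j(t) - \pi_j(t)}_{\L1}$ for a.e.~$t$, and Gr\"onwall's lemma gives $\sum_j \norma{\rho_j(t) - \pi_j(t)}_{\L1} \leq e^{Ct}\sum_j \norma{\rho_{j,o} - \pi_{j,o}}_{\L1}$. To recover the sharp statement~\eqref{eq:9} with constant $1$, I expect the authors actually prove that the full source-difference term — including the nonlocal remainder — is still $\leq 0$ after summation, which requires a more careful global bookkeeping rather than the crude Lipschitz bound; alternatively \eqref{eq:9} could be obtained in the discrete setting directly from Algorithm~\ref{alg:1} by an analogous monotonicity argument on the scheme (for which the $[0,1]$-invariance of Lemma~\ref{lem:invariance} is essential) and then passing to the limit. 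The main obstacle, in either route, is precisely this: showing that the nonlocal mismatch in the velocity arguments does not destroy the exact $\L1$-contraction, i.e.~that the sign structure of $S_j$ is robust enough to absorb the convolution terms without generating a genuinely positive contribution. Uniqueness then follows immediately by taking $\brho_o = \bpi_o$ in~\eqref{eq:9}.
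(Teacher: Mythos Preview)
Your approach correctly identifies that working directly with $\sum_j \norma{\rho_j - \pi_j}_{\L1}$ only yields the exponential bound $e^{Ct}$, and you are right to suspect that this is not how the sharp constant~$1$ is obtained. But both of your guesses for how to close the gap are off the mark. The source-difference term is \emph{not} shown to be $\leq 0$ after summation (the nonlocal mismatch genuinely prevents this), and the argument is not carried out at the discrete level either.

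The missing idea is the Crandall--Tartar lemma. The paper works with the \emph{positive part} $(\rho_j - \pi_j)^+$ rather than the absolute value, and exploits the monotonicity of $S_j(u,w,U,W)$ in all four arguments (nondecreasing in $u,U$, nonincreasing in $w,W$) to bound the source contribution by $\mathcal{K}$ times a sum of positive parts $(\rho_{j\pm 1} - \pi_{j\pm 1})^+$ and $(R_{j\pm 1} - P_{j\pm 1})^+$. After integrating in $x$ and using the convolution contraction, one gets $\Theta(\tau) \leq \Theta(0) + 4\mathcal{K}\int_0^\tau \Theta(t)\d{t}$ for $\Theta(t) = \sum_j \int_\reali (\rho_j - \pi_j)^+ \d{x}$, and Gr\"onwall gives $\Theta(t) \leq e^{4\mathcal{K}t}\Theta(0)$. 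The exponential factor is harmless here because this estimate is only used to deduce \emph{order preservation}: if $\rho_{o,j} \leq \pi_{o,j}$ a.e.\ for all $j$, then $\Theta(0) = 0$, hence $\Theta(t) = 0$, hence $\rho_j(t) \leq \pi_j(t)$. Since the solution operator is order-preserving and conserves total mass (Lemma~\ref{lem:cons}), the Crandall--Tartar lemma immediately yields the exact $\L1$-contraction~\eqref{eq:9}. This is the step your plan is missing: you never need the source-difference to have a sign, only to be controlled by the positive parts.
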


\begin{proof}
  The proof follows the idea of~\cite[Theorem~3.3]{HoldenRisebro},
  with the main difference that now the source terms are
  \emph{nonlocal} functions of the solution. We recall the proof
  briefly for completeness, focusing mainly on those parts where the
  nonlocality comes in.

  Kru\v zkov doubling of variables techniques, together with the fact
  that $\brho$ and $\bpi$ are solutions to~\eqref{eq:model}, yields,
  for $\tau\in [0,T]$ and for any $j=1, \ldots, M$,
  \begin{equation}
    \label{eq:10}
    \begin{aligned}
      \int_\reali \left(\rho_j(\tau) - \pi_j(\tau)\right)^+ \d{x} \leq
      \ & \int_\reali \left(\rho_j(0) - \pi_j(0)\right)^+ \d{x}
      \\
      & + \int_0^\tau \int_\reali H(\rho_j-\pi_j) \left(
        \mathcal{S}(\brho, \boldsymbol{R}, j) - \mathcal{S}(\bpi,
        \boldsymbol{P}, j) \right) \d{x} \d{t},
    \end{aligned}
  \end{equation}
  where $H$ is the Heaviside function,
  \begin{align*}
    \mathcal{S}(\boldsymbol{u}, \boldsymbol{U}, j) = \
    & S_{j-1}(u_{j-1},u_j, U_{j-1}, U_j) -
      S_{j}(u_{j},u_{j+1}, U_{j}, U_{j+1}),
    \\
    U_j(t,x) = \
    & (u_j(t) * w_\nds)(x),
  \end{align*}
  and we denote by $\boldsymbol{U}$ the vector of components $U_j$,
  $j=1, \ldots, M$.  It can be easily verified that the map
  $S_j(u,w,U,W)$ defined in~\eqref{eq:source} is nondecreasing in the
  first and third variables and nonincreasing in the second and fourth
  variables, thus $\partial_u S_j, \, \partial_U S_j \geq 0$ and
  $\partial_w S_j, \, \partial_W S_j \leq 0$. Hence, if
  $\rho_j > \pi_j$, clearly $R_j > P_j$ and moreover
  \begin{align*}
    & \mathcal{S}(\brho, \boldsymbol{R}, j)
      -
      \mathcal{S}(\bpi, \boldsymbol{P}, j)
    \\
    = \
    & S_{j-1} (\rho_{j-1}, \rho_{j}, R_{j-1}, R_{j})
      -
      S_{j-1} (\pi_{j-1}, \pi_{j}, P_{j-1}, P_{j})
    \\
    & -
      S_{j} (\rho_{j}, \rho_{j+1}, R_{j}, R_{j+1})
      +
      S_{j} (\pi_{j}, \pi_{j+1}, P_{j}, P_{j+1})
    \\
    \leq \
    & S_{j-1} (\rho_{j-1}, \pi_{j}, R_{j-1}, P_{j})
      -
      S_{j-1} (\pi_{j-1}, \pi_{j}, P_{j-1}, P_{j})
    \\
    & -
      S_{j} (\rho_{j}, \rho_{j+1}, R_{j}, R_{j+1})
      +
      S_{j} (\rho_{j}, \pi_{j+1}, R_{j}, P_{j+1})
    \\
    = \
    & \partial_u S_{j-1} (\sigma_{j-1}, \pi_{j}, R_{j-1}, P_{j})
      \, \left(\rho_{j-1} - \pi_{j-1}\right)
      +
      \partial_U S_{j-1} (\pi_{j-1}, \pi_{j}, T_{j-1}, P_{j})
      \, \left(R_{j-1} - P_{j-1}\right)
    \\
    & -
      \partial_w S_{j} (\rho_{j}, \sigma_{j+1}, R_{j}, R_{j+1})
      \, \left(\rho_{j+1} - \pi_{j+1}\right)
      -
      \partial_W S_{j} (\rho_{j}, \sigma_{j+1}, R_{j}, T_{j+1})
      \, \left(R_{j+1}- P_{j+1}\right)
    \\
    \leq \
    & \mathcal{K} \left(
      \left(\rho_{j-1} - \pi_{j-1}\right)^+
      +
      \left(R_{j-1} - P_{j-1}\right)^+
      +
      \left(\rho_{j+1} - \pi_{j+1}\right)^+
      +
      \left(R_{j+1} - P_{j+1}\right)^+
      \right),
  \end{align*}
  where $\sigma_{j \pm 1}$ lies in the interval between
  $\rho_{j \pm 1}$ and $\pi_{j \pm 1}$, $T_{j \pm 1}$ lies in the
  interval between $R_{j \pm 1}$ and $P_{j \pm 1}$ and the Lipschitz
  constant $\mathcal{K}$ of the map $S_j$ is as
  in~\eqref{eq:lipschitzconstant}.  Therefore
  \begin{equation}
    \label{eq:11}
    \sum_{j=1}^M H(\rho_j - \pi_j)
    \left(\mathcal{S}(\brho, \boldsymbol{R},j)
      -
      \mathcal{S}(\bpi, \boldsymbol{P}, j)
    \right)
    \leq
    2 \, \mathcal{K} \sum_{j=1}^M (\rho_j - \pi_j)^+
    +
    2 \, \mathcal{K} \sum_{j=1}^M (R_j - P_j)^+.
  \end{equation}
  Observe that
  $\int_\reali (g*w_\nds)^+(x) \d{x} = \int_\reali g^+ (x) \d{x}$,
  thus, due to~\eqref{eq:disConvOp}, when integrating~\eqref{eq:11}
  over $\reali$ we obtain
  \begin{equation}
    \label{eq:12}
    \sum_{j=1}^M \int_\reali H(\rho_j - \pi_j)
    \left(\mathcal{S}(\brho, \boldsymbol{R},j)
      -
      \mathcal{S}(\bpi, \boldsymbol{P}, j)
    \right) \d{x}
    \leq
    4 \, \mathcal{K} \sum_{j=1}^M \int_\reali(\rho_j - \pi_j)^+ \d{x}.
  \end{equation}
  Define
  \begin{displaymath}
    \Theta(t) = \sum_{j=1}^M \int_\reali (\rho_j - \pi_j)^+ \d{x},
  \end{displaymath}
  so that, collecting together~\eqref{eq:10} and~\eqref{eq:12}, we get
  \begin{displaymath}
    \Theta (\tau)
    \leq \Theta (0) + 4 \, \mathcal{K} \int_0^\tau \Theta (t) \d{t}.
  \end{displaymath}
  Gronwall's inequality yields
  $\Theta(t) \leq e^{4 \, \mathcal{K} \, t} \, \Theta (0)$. If
  $\Theta (0) = 0$, that is $\rho_{o,j}(x) \leq \pi_{o,j}(x)$ a.e.~in
  $\reali$ for all $j$, then $\Theta(t) = 0$ for $t>0$, that is
  $\rho_j (t,x)\leq \pi_j (t,x)$ a.e.~in $\reali$ for all $j$.
  \\
  The proof of $\L1$-contractivity is concluded by an application of
  the Crandall--Tartar lemma~\cite[Lemma~2.13]{HoldenRisebroBook2015}.
\end{proof}

Following~\cite[Corollary~3.4]{HoldenRisebro}, the $\L1$-contractivty
of the solution proved in Theorem~\ref{thm:contract} guarantees that
the solution to problem~\eqref{eq:model} satisfies some \emph{a
  priori} estimates.

\begin{corollary}
  \label{cor:niceCons}
  Let $\brho $ be a solution to problem~\eqref{eq:model} in the sense
  of Definition~\ref{def:sol}, with initial datum
  $\brho_o \in (\L1 \cap \BV)(\reali; [0,1]^M)$. Then,
  \begin{align}
    \label{eq:cons1}
    \sum_{j=1}^{M-1} \norma{\rho_{j+1}(t) - \rho_j(t)}_{\L1(\reali)}
    \leq \
    & \sum_{j=1}^{M-1} \norma{\rho_{j+1,o} - \rho_{j,o}}_{\L1(\reali)},
    \\
    \label{eq:cons2}
    \sum_{j=1}^M \tv\left(\rho_j(t)\right)
    \leq \
    & \sum_{j=1}^M \tv\left(\rho_{j,o}\right),
    \\
    \label{eq:cons3}
    \sum_{j=1}^{M} \norma{\rho_{j}(t+h) - \rho_j(t)}_{\L1(\reali)}
    \leq \
    & \sum_{j=1}^{M} \norma{\rho_{j}(h) - \rho_{j,o}}_{\L1(\reali)}, \quad h \in \reali.
  \end{align}
\end{corollary}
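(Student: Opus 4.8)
The plan is to derive all three estimates as consequences of the $\L1$-contractivity property established in Theorem~\ref{thm:contract}, using the standard trick of comparing a solution with a suitably shifted or permuted copy of itself. The key observation is that the structure of the source term is invariant under spatial translations and, crucially, that if $\brho$ solves~\eqref{eq:model} then so does any translate $\brho(\cdot, \cdot + h)$ and any time-shift $\brho(\cdot + s, \cdot)$, since the equations are autonomous in both $t$ and $x$.

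For~\eqref{eq:cons2}, the $\BV$ bound, I would fix $h \in \reali$ and apply Theorem~\ref{thm:contract} with $\brho$ and $\bpi := \brho(\cdot, \cdot + h)$, which is a solution to~\eqref{eq:model} with initial datum $\brho_o(\cdot + h)$. Inequality~\eqref{eq:9} then gives $\sum_j \norma{\rho_j(t) - \rho_j(t, \cdot+h)}_{\L1} \leq \sum_j \norma{\rho_{j,o} - \rho_{j,o}(\cdot + h)}_{\L1}$; dividing by $\modulo{h}$ and letting $h \to 0$ yields $\sum_j \tv(\rho_j(t)) \leq \sum_j \tv(\rho_{j,o})$, using the characterization of total variation as the limit of translation differences for $\L1$ functions. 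For~\eqref{eq:cons3}, the $\L1$-continuity in time, I would instead compare $\brho(\cdot + h, \cdot)$ — a solution with initial datum $\brho(h, \cdot)$ — with $\brho$ itself, again a solution, with initial datum $\brho_o$; applying~\eqref{eq:9} with these two solutions directly gives the claimed inequality after noting $\brho(h,\cdot)$ is itself in $(\L1 \cap \BV)(\reali;[0,1]^M)$ by~\eqref{eq:cons2} and the assumptions.

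For~\eqref{eq:cons1}, the contraction of the gap between adjacent lanes, the comparison is with the \emph{cyclically shifted} lane configuration: I would set $\pi_j := \rho_{j+1}$ for $j=1, \ldots, M-1$ and check that $(\rho_2, \ldots, \rho_M)$, read as an $(M-1)$-lane configuration, still solves a system of the same type~\eqref{eq:model} — here one must be slightly careful about the boundary conditions~\eqref{eq:boundaryconditions}, which force $S_M = 0$, so that dropping lane $1$ and relabeling indeed produces a genuine solution of the multilane system with $M-1$ lanes and the same source structure. Then~\eqref{eq:9} applied to this shifted pair gives~\eqref{eq:cons1}.

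The main obstacle I anticipate is the last point: one must verify rigorously that the lane-shifted vector $(\rho_2, \ldots, \rho_M)$ is again a weak solution in the sense of Definition~\ref{def:sol} to the analogous system, i.e.\ that the source terms restricted to lanes $2, \ldots, M$ together with the boundary convention close up correctly, and that this sub-system satisfies the hypotheses of Theorem~\ref{thm:contract}. This is essentially a bookkeeping argument on the source term~\eqref{eq:source}, but it requires checking that $S_1(\rho_1, \rho_2, R_1, R_2)$ — the term coupling lane $1$ to lane $2$ — is handled consistently; following~\cite[Corollary~3.4]{HoldenRisebro}, one expects this to go through, since the $S_j$'s only couple nearest neighbours and the argument is insensitive to which contiguous block of lanes one considers. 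The translation and time-shift arguments for~\eqref{eq:cons2} and~\eqref{eq:cons3} are routine once autonomy of the system is noted.
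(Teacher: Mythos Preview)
Your arguments for~\eqref{eq:cons2} and~\eqref{eq:cons3} are correct and match what the paper intends: both follow from~\eqref{eq:9} applied to a spatial translate, respectively a time-shift, of the solution, exploiting the autonomy of~\eqref{eq:model} in $x$ and $t$.

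For~\eqref{eq:cons1} your restriction strategy does not close, and the obstruction is structural rather than bookkeeping. The vector $(\rho_2,\ldots,\rho_M)$ is \emph{not} a solution of any closed $(M{-}1)$-lane system of type~\eqref{eq:model}: the equation satisfied by $\rho_2$ carries the term $S_1(\rho_1,\rho_2,R_1,R_2)$, which couples it to the discarded lane and does not vanish in general, so it cannot be replaced by the boundary convention $\tilde S_0=0$ of the sub-system. Symmetrically, $(\rho_1,\ldots,\rho_{M-1})$ fails to close at the top because of $S_{M-1}$. Even setting this aside, a lane-index shift is not a symmetry of~\eqref{eq:model} in the way spatial translation is: lane $j$ carries flux $\rho\,v_j(\rho)$, so $\pi_j:=\rho_{j+1}$ satisfies a conservation law with flux $\rho\,v_{j+1}(\rho)$, not $\rho\,v_j(\rho)$, and Theorem~\ref{thm:contract} compares two solutions of the \emph{same} system. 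The paper does not restrict to a sub-block of lanes; instead it invokes the extended conventions $\rho_0=\rho_1$, $v_0=v_1$, $\rho_{M+1}=\rho_M$, $v_{M+1}=v_M$ (under which formula~\eqref{eq:source} gives $S_0=S_M=0$ automatically) and compares $\brho$ with its lane-shift inside this augmented framework, following~\cite[Corollary~3.4]{HoldenRisebro}.
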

The proof relies solely on~\eqref{eq:9}, together with the enforced
boundary conditions $\rho_0(t,x) = \rho_1(t,x)$, $v_0(u) = v_1(u)$,
$\rho_{M+1} (t,x) = \rho_M (t,x)$, $v_{M+1}(u)=v_M(u)$.

Notice that Corollary~\ref{cor:niceCons} provides better estimates
than those coming from the approximate solution built in
Section~\ref{sec:numScheme}. Compare in particular~\eqref{eq:cons2} to
the total variation in space provided by~\eqref{eq:BVspace}.

\section{A multilane model with nonlocal flux and nonlocal source
  term}
\label{sec:ext}

In the following, we consider a modification of
problem~\eqref{eq:model} assuming additionally a \emph{nonlocal}
velocity in the flux function.  In particular, the treatment of the
nonlocal flux in each lane is inspired by~\cite{BlandinGoatin}. The
problem under consideration reads
\begin{equation}
  \label{eq:nonlocaltransport}
  \left\{
    \begin{array}{lr}
      \partial_t \rho_j + \partial_x \left(\rho_j \, v_j\left(\rho_j \ast w_\ndt \right)\right) = S_{j-1}(\rho_{j-1}, \rho_j, R_{j-1}, R_j) -S_j(\rho_j, \rho_{j+1}, R_j, R_{j+1})
      & j= 1, \ldots, M,
      \\
      \rho_{j}(0,x) = \rho_{o,j}(x)
      & j= 1, \ldots, M.
    \end{array}
  \right.
\end{equation}
In order to have a well defined model, we only consider kernel
functions such that $\spt w_\ndt \subseteq [0,\ndt]$, meaning that
drivers adapt their speed to the downstream traffic. In addition, we
assume that the kernel $w_\ndt \in \C1([0, \ndt]; \reali_+)$ is
non-increasing, i.e.~$w_\ndt'\leq 0$, and, as usual
$\int_\reali w_\ndt = 1$. The convolution product is thus defined as
\begin{equation}
  \label{eq:Rjtransport}
  R^\ndt_j = R^\ndt_j(t,x) = \left(\rho(t) \ast w_\ndt\right)(x):= \int_x^{x+\ndt} w_\ndt(y-x)\rho(t,y) dy.
\end{equation}
We remark that the additional assumptions on the kernel $w_\ndt$ in
the flux are not needed for the kernel $w_\nds$ in the source term,
see Section~\ref{sec:nonlocal_source}. Moreover, when considering both
nonlocal flux and nonlocal source, we underline that the kernels may
differ.  In the following, we denote the convolution products in the
source by $R_j^\nds$~\eqref{eq:Rj} and those in the flux by
$R_j^\ndt$~\eqref{eq:Rjtransport}, to emphasize the different kernels.

We underline that the kernel function $w_\nds$ appearing in the source
can look either only forward or both back- and forward, differently
from the kernel function $w_\ndt$ appearing in the flux, which is
assumed to be only forward-looking.
As already mentioned in the introduction, these are the key points in
which the proposed model~\eqref{eq:nonlocaltransport} differs from the
approach presented in~\cite{BayenKeimer-preprint}. Therein, the
uniqueness is only shown for the same nonlocality in the flux and
source term, such that both have to be forward looking with the same
non-increasing kernel and the same nonlocal range. So the model~\eqref{eq:nonlocaltransport} provides more flexibility in terms of
modelling. However, using the same non-increasing, forward looking
kernel and nonlocal range, the model~\eqref{eq:nonlocaltransport} fits
into the framework proposed in~\cite[Definition 1.1, Assumption 2.2,
Assumption 3.1]{BayenKeimer-preprint}.
We also note that in~\cite{BayenKeimer-preprint} the authors use a
different technique to show existence and uniqueness of solutions,
which enables them to prove uniqueness without an entropy
condition.

We proceed as in Section~\ref{sec:numScheme}: We construct a sequence
of approximate solutions to problem~\eqref{eq:nonlocaltransport} and
prove its convergence. The approximate solution $\brho_\Delta$ is
defined as in~\eqref{eq:rhoDelta} and it is constructed as in
Algorithm~\ref{alg:1}, substituting the numerical flux
in~\eqref{eq:numFlux} by
\begin{equation}
  \label{eq:numFluxNL}
  F_j(\rho^n_{j,k},  R^{\ndt,n}_{j,k})=v_j(R^{\ndt,n}_{j,k}) \, \rho^n_{j,k},
\end{equation}
and the convective step~\eqref{eq:convStep} by
\begin{equation}
  \label{eq:convStepNL}
  \rho^{n+1/2}_{j,k}
  =
  \rho^n_{j,k}
  -
  \lambda \left[
    F_j(\rho^n_{j,k},  R^{\ndt,n}_{j,k}) - F_j(\rho^n_{j,k-1},  R^{\ndt,n}_{j,k-1}),
  \right]
\end{equation}
where $R^{\ndt,n}_{j,k}$ is computed as in~\eqref{eq:disConvOp}, and
in particular as in~\eqref{eq:forward}, with $w_\ndt$ instead of
$w_\nds$. Due to the definition of the kernel $w_\ndt$, notice that
the case~\eqref{eq:backfor} does not apply to the present
setting.
Accordingly, we rename the discrete convolution appearing in the
source, defined by~\eqref{eq:disConvOp}, as
$R^{\nds,n+1/2}_{j,k}$. The choice of the numerical
flux~\eqref{eq:numFluxNL} follows from~\cite{chiarello2019multiclass,
  friedrich2018godunov}.

We report below the definition of solution to
problem~\eqref{eq:nonlocaltransport}, analogous to
Definition~\ref{def:sol}, and then recall the main results, analogous
to those in Section~\ref{sec:numScheme}. Only those parts of the
proofs which are substantially different will be reported.

\begin{definition}
  \label{def:solNL}
  Let $\rho_{o,j} \in (\L1 \cap \BV)(\reali;[0,1])$, for
  $j=1, \ldots, M$. We say that
  $\rho_j \in \C0([0,T]; \L1(\reali; [0,1]))$, with
  $\rho_j(t, \cdot) \in \BV(\reali; [0,1])$ for $t \in [0,T]$, is a
  \emph{weak solution} to~\eqref{eq:nonlocaltransport} with initial
  datum $\rho_{o,j}$ if for any
  $\phi \in \Cc1([0,T[ \times \reali; \reali)$ and for all
  $j=1, \ldots, M$
  \begin{multline*}
    \int_0^T \!\!\!\int_\reali\!\! \left( \rho_j \, \partial_t \phi +
      \rho_j \, V_j \, \partial_x \phi +
      \left(S_{j-1}(\rho_{j-1},\rho_j, R^\nds_{j-1}, R^\nds_j) -
        S_j(\rho_j, \rho_{j+1}, R^\nds_j, R^\nds_{j+1}) \right)\phi
    \right)\! \d{x} \d{t}
    \\
    + \int_\reali \rho_{o,j} \, \phi(0,x) \d{x} = 0,
  \end{multline*}
  where
  $V_j (t,x) = v_j\left(\left(\rho_j(t) * w_\ndt\right)(x)\right)$,
  $S_j $ is as in~\eqref{eq:source} and
  $R^\nds_j = R^\nds_j (t,x) = \left(\rho_j (t) * w_\nds\right) (x)$.
  The solution $\rho_j$ is an \emph{entropy solution} if for any
  $\phi \in \Cc1([0,T[ \times \reali; \reali_+)$, for all
  $\kappa \in \reali$ and for all $j=1,\ldots, M$
  \begin{multline*}
    \int_0^T \int_\reali \left( \modulo{\rho_j- \kappa} \partial_t
      \phi + \modulo{\rho_j- \kappa} V_j \, \partial_x \phi \right)
    \d{x} \d{t} + \int_\reali \modulo{\rho_{o,j}- \kappa} \phi(0,x)
    \d{x}
    \\
    \geq \int_0^T \int_\reali \sgn(\rho_j-\kappa) \left( S_j(\rho_j,
      \rho_{j+1}, R_j, R_{j+1}) -S_{j-1}(\rho_{j-1}, \rho_j, R_{j-1},
      R_j) + \kappa \, \partial_x V_j \right) \phi \d{x} \d{t}.
  \end{multline*}
\end{definition}
\bigskip

In the following, whenever we refer to the \emph{modified Algorithm}
we mean Algorithm~\ref{alg:1} with~\eqref{eq:numFlux}
and~\eqref{eq:convStep} substituted by~\eqref{eq:numFluxNL}
and~\eqref{eq:convStepNL}, respectively. All the approximate solutions
appearing in the results below are constructed via this modified
Algorithm.

\begin{lemma}
  \label{lem:invNL}
  Let $\brho_o \in \L\infty (\reali; [0,1]^M)$. Assume that the CFL
  condition~\eqref{eq:CFL} holds.  Then, for all $t>0$ and
  $x \in \reali$, the piece-wise constant approximate solution
  $\brho_\Delta$ constructed through the modified Algorithm attains
  value in the set $[0,1]^M$, i.e.
  \begin{displaymath}
    0 \leq \rho_{j, \Delta} (t,x) \leq 1
    \quad \mbox{ for all } j = 1, \ldots, M.
  \end{displaymath}
\end{lemma}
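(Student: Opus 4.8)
The plan is to argue by induction on $n$, exactly as in the proof of Lemma~\ref{lem:invariance}. Since the source term $S_j$ is literally unchanged, the relaxation step~\eqref{eq:relaxStep} is handled verbatim as there: Cases A--D of that proof show that $\rho_{j,k}^{n+1}\in[0,1]$ for all $j,k$, provided $\rho_{j,k}^{n+1/2}\in[0,1]$ for all $j,k$. Hence the only genuinely new ingredient is the convective step~\eqref{eq:convStepNL}, in which the Godunov flux has been replaced by the nonlocal flux~\eqref{eq:numFluxNL}. One may either invoke the maximum principle for this step from~\cite{friedrich2018godunov,chiarello2019multiclass}, whose numerical flux coincides with~\eqref{eq:numFluxNL}, or reprove it; I sketch the latter.

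Assume $\rho_{j,l}^{n}\in[0,1]$ for all $l\in\interi$, so that $R^{\ndt,n}_{j,l}\in[0,1]$ by~\eqref{eq:Rbound}. Rewrite~\eqref{eq:convStepNL} as
\[
  \rho_{j,k}^{n+1/2}
  = \rho_{j,k}^{n}\bigl(1-\lambda\,v_j(R^{\ndt,n}_{j,k})\bigr)
  + \lambda\,v_j(R^{\ndt,n}_{j,k-1})\,\rho_{j,k-1}^{n}.
\]
Non-negativity is immediate: the CFL condition~\eqref{eq:CFL} gives $1-\lambda\,v_j(R^{\ndt,n}_{j,k})\geq 1-\lambda\,V_{\max}\geq 1/2$, and the other factors are $\geq 0$ since $v_j\geq 0$. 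For the bound $\rho_{j,k}^{n+1/2}\leq 1$ I would write
\[
  \rho_{j,k}^{n+1/2}-1
  = -(1-\rho_{j,k}^{n})\bigl(1-\lambda\,v_j(R^{\ndt,n}_{j,k})\bigr)
  + \lambda\,(\rho_{j,k-1}^{n}-1)\,v_j(R^{\ndt,n}_{j,k-1})
  + \lambda\,\bigl(v_j(R^{\ndt,n}_{j,k-1})-v_j(R^{\ndt,n}_{j,k})\bigr),
\]
the first two summands being $\leq 0$. The third summand is where the structural assumptions on the flux kernel enter: since $\spt w_\ndt\subseteq[0,\ndt]$ and $w_\ndt'\leq 0$, the weights $\gamma_h$ in~\eqref{eq:forward} are $\geq 0$, sum to $1$, and are non-increasing in $h$, so an Abel summation gives, with $H=\lfloor\ndt/\dx\rfloor$,
\[
  R^{\ndt,n}_{j,k}-R^{\ndt,n}_{j,k-1}
  = \gamma_{H-1}\,\rho_{j,k+H}^{n}
  + \sum_{m=1}^{H-1}(\gamma_{m-1}-\gamma_m)\,\rho_{j,k+m}^{n}
  - \gamma_0\,\rho_{j,k}^{n}
  \leq \gamma_0\,(1-\rho_{j,k}^{n}),
\]
using $0\leq\rho^{n}\leq 1$ and the telescoping $\sum_{m=1}^{H-1}(\gamma_{m-1}-\gamma_m)=\gamma_0-\gamma_{H-1}$. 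Since $\modulo{v_j'}\leq V'_{\max}$ and $\gamma_0\leq 1$, this yields $v_j(R^{\ndt,n}_{j,k-1})-v_j(R^{\ndt,n}_{j,k})\leq V'_{\max}(1-\rho_{j,k}^{n})$, whence
\[
  \rho_{j,k}^{n+1/2}-1
  \leq (1-\rho_{j,k}^{n})\bigl(-1+\lambda\,V_{\max}+\lambda\,V'_{\max}\bigr)
  = (1-\rho_{j,k}^{n})(\lambda\,\mathcal{V}-1)\leq 0
\]
by~\eqref{eq:CFL}. This closes the induction for the convective step, and combining it with the relaxation step treated as in Lemma~\ref{lem:invariance} completes the proof.

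The step I expect to be the main obstacle is precisely this upper bound on the convective step. Contrary to the local Godunov scheme, $\rho_{j,k}^{n+1/2}$ is \emph{not} a convex combination of $\rho_{j,k}^{n}$ and $\rho_{j,k-1}^{n}$ --- the two coefficients sum to $1+\lambda\bigl(v_j(R^{\ndt,n}_{j,k-1})-v_j(R^{\ndt,n}_{j,k})\bigr)$, which may exceed $1$ --- so this surplus has to be absorbed, and the estimate $R^{\ndt,n}_{j,k}-R^{\ndt,n}_{j,k-1}\leq\gamma_0(1-\rho_{j,k}^{n})$ that makes this possible genuinely relies on the forward-looking support and the monotonicity $w_\ndt'\leq 0$ of the flux kernel; it fails for a general kernel. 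Note that no such assumption is required on the source kernel $w_\nds$, since the relaxation step is treated exactly as in Lemma~\ref{lem:invariance}.
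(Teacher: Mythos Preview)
Your proof is correct and follows the same structure as the paper's: induction on $n$, the relaxation step handled verbatim via Lemma~\ref{lem:invariance}, and the only new ingredient being the invariance of $[0,1]$ under the nonlocal convective step~\eqref{eq:convStepNL}. The paper dispatches this last point in one line by citing~\cite[Theorem~3.1]{friedrich2018godunov}, whereas you spell out the underlying maximum-principle argument directly via the Abel-summation bound $R^{\ndt,n}_{j,k}-R^{\ndt,n}_{j,k-1}\leq\gamma_0(1-\rho_{j,k}^{n})$; this is precisely the content of the cited result, so the two proofs coincide.
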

\begin{proof}
  Since the CFL condition~\eqref{eq:CFL} is more restrictive than that
  necessary for the convergence of the Godunov type scheme,
  see~\cite[Theorem 3.1]{friedrich2018godunov}, the convective
  step~\eqref{eq:convStep} still preserves the invariance of the set
  $[0,1]^M$ and the rest of the proof of Lemma~\ref{lem:invariance}
  can be applied. 
\end{proof}

\noindent Lemma~\ref{lem:cons} still holds, since the modified
Algorithm preserves the $\L1$-norm.

\begin{proposition}[\bf{$\BV$ estimate in space}]
  \label{prop:BVspaceNL}
  Let $\brho_o \in (\L1 \cap \BV) (\reali; [0,1]^M)$.
  Assume that the CFL condition~\eqref{eq:CFL} holds. Then, for
  $n=0,\dots, N_T-1$ the following estimate holds
  \begin{equation}
    \label{eq:BVspaceNL}
    \sum_{j=1}^M \sum_{k\in\interi}
    \modulo{\rho_{j,k+1}^{n} - \rho_{j,k}^n}
    \leq
    e^{t^n\left(\, 8 \, \mathcal{K}+w_\ndt (0) \mathcal{V}\right)}
    \sum_{j=1}^M \tv(\rho_{j}^0).
  \end{equation}
\end{proposition}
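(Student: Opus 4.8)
The plan is to follow the structure of the proof of Proposition~\ref{prop:BVspace}, carefully tracking the new contribution coming from the spatial variation of the nonlocal speed $v_j(R^{\ndt,n}_{j,k})$ in the convective step. First I would split the evolution into the convective half-step~\eqref{eq:convStepNL} and the relaxation step~\eqref{eq:relaxStep}; the relaxation step is handled exactly as before, since the source term and the discrete convolution $R^{\nds}$ in it are untouched by the modification. Thus, invoking the Lipschitz continuity of $S_j$ from Lemma~\ref{lem:Lipsource} and property~\eqref{eq:Rtv} of the discrete convolution operator, one obtains
\begin{displaymath}
  \sum_{j=1}^M \sum_{k\in\interi}
  \modulo{\rho_{j, k+1}^{n+1} - \rho_{j, k}^{n+1}}
  \leq
  (1+8 \, \dt \, \mathcal{K})
  \sum_{j=1}^M \sum_{k\in\interi}
  \modulo{\rho_{j, k+1}^{n+1/2}-\rho_{j, k}^{n+1/2}},
\end{displaymath}
identically to the local case. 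So the whole problem reduces to controlling $\sum_{j,k}\modulo{\rho_{j,k+1}^{n+1/2}-\rho_{j,k}^{n+1/2}}$ in terms of $\sum_{j,k}\modulo{\rho_{j,k+1}^{n}-\rho_{j,k}^{n}}$ for the nonlocal Godunov flux~\eqref{eq:numFluxNL}.

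For that convective estimate I would rely on the known $\BV$ bound for the single-lane nonlocal Godunov scheme: this is exactly the content of the convergence analysis in~\cite{friedrich2018godunov} (see also~\cite{chiarello2019multiclass}), where one writes $\rho_{j,k+1}^{n+1/2}-\rho_{j,k}^{n+1/2}$ using~\eqref{eq:convStepNL}, expands $F_j(\rho_{j,k}^n,R^{\ndt,n}_{j,k})=v_j(R^{\ndt,n}_{j,k})\,\rho^n_{j,k}$, and estimates the discrete derivative. The factor $v_j(R^{\ndt,n}_{j,k})\rho^n_{j,k}$ contributes two kinds of terms after rearranging indices: a "transport" part proportional to $\lambda\,\mathcal V\,\modulo{\rho^n_{j,k+1}-\rho^n_{j,k}}$ (this is the standard TVD-type contribution, absorbed into a prefactor $1+\O(\dt)$ that vanishes under the CFL condition~\eqref{eq:CFL}), and a genuinely new part involving the variation of the convolved speed, i.e.~terms proportional to $\lambda\,V'_{\max}\,\rho^n_{j,k}\,\modulo{R^{\ndt,n}_{j,k+1}-R^{\ndt,n}_{j,k}}$, or equivalently $\lambda\,V'_{\max}\,\rho^n_{j,k}\,\modulo{\gamma_0}\,$-type differences of consecutive convolution values. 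Using $\rho_{j,k}^n\le1$ and the explicit forward kernel~\eqref{eq:forward}, the discrete derivative of $R^{\ndt,n}_{j,k}$ telescopes against $w_\ndt$: since $w_\ndt$ is non-increasing with $\int w_\ndt=1$, one bounds $\sum_k\modulo{R^{\ndt,n}_{j,k+1}-R^{\ndt,n}_{j,k}}$ by a constant times $\gamma_0\le w_\ndt(0)\,\dx$ times $\sum_k\rho^n_{j,k}$... more precisely the monotonicity of $w_\ndt$ makes the telescoped sum collapse so that the new term is bounded by $\dt\,w_\ndt(0)\,\mathcal V$ times a density sum, which by positivity is an $\L1$-type quantity, not a $\tv$ quantity. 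This is the crux: the non-increasing, forward-looking hypothesis on $w_\ndt$ is precisely what prevents this term from blowing up the total variation and instead contributes the extra exponential rate $w_\ndt(0)\mathcal V$.

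Putting the pieces together, the convective step yields
\begin{displaymath}
  \sum_{j=1}^M \sum_{k\in\interi}
  \modulo{\rho_{j, k+1}^{n+1/2} - \rho_{j, k}^{n+1/2}}
  \leq
  \bigl(1 + \dt\, w_\ndt(0)\,\mathcal{V}\bigr)
  \sum_{j=1}^M \sum_{k\in\interi}
  \modulo{\rho_{j, k+1}^{n} - \rho_{j, k}^{n}},
\end{displaymath}
and combining with the relaxation estimate gives a one-step inequality with prefactor $(1+8\,\dt\,\mathcal K)(1+\dt\,w_\ndt(0)\,\mathcal V)\le e^{\dt(8\mathcal K+w_\ndt(0)\mathcal V)}$ (up to discarding the $\O(\dt^2)$ cross term, which is legitimate since we only need an upper bound of this exponential form). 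Iterating over $n$ time steps and using $t^n=n\dt$ yields~\eqref{eq:BVspaceNL}. The main obstacle I expect is the bookkeeping in the convective $\BV$ estimate: getting the exact constant $w_\ndt(0)$ rather than some cruder bound requires carefully exploiting the monotonicity $w_\ndt'\le0$ when telescoping the discrete convolution differences — this is exactly the place where our hypotheses on $w_\ndt$ (forward-looking, non-increasing) are used and cannot be relaxed, and it is why the argument for this model genuinely differs from Proposition~\ref{prop:BVspace}. Everything else is a routine adaptation of the local-flux proof, so in the write-up I would simply cite~\cite{friedrich2018godunov} for the single-lane convective estimate and concentrate on how the multilane source contribution and the extra speed-variation term combine.
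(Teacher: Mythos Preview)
Your proposal is correct and follows exactly the same approach as the paper: split into relaxation and convective half-steps, keep the relaxation estimate $(1+8\,\dt\,\mathcal K)$ from Proposition~\ref{prop:BVspace} unchanged, replace the TVD property of the local Godunov step by the single-lane $\BV$ bound $(1+\dt\,w_\ndt(0)\,\mathcal V)$ from~\cite[Theorem~3.2]{friedrich2018godunov}, multiply the two factors, bound by the exponential, and iterate. One small caution: your heuristic that the speed-variation term collapses to ``$\dt\,w_\ndt(0)\,\mathcal V$ times a density sum'' (an $\L1$ quantity) is not quite the right picture---the cited estimate is a genuine multiplicative $\tv$ growth factor, with the monotonicity of $w_\ndt$ used pointwise to bound $\modulo{R^{\ndt,n}_{j,k+1}-R^{\ndt,n}_{j,k}}$---but since you defer to~\cite{friedrich2018godunov} for the details this does not affect the argument.
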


\begin{proof}
  The proof of Proposition~\ref{prop:BVspace} can be easily adapted.
  We just have to replace estimate~\eqref{eq:TVDstep}, involving the
  convective step, since the scheme with the new numerical
  flux~\eqref{eq:numFluxNL} is not total variation diminishing.
  Following~\cite[Theorem 3.2]{friedrich2018godunov} we obtain
  \begin{align*}
    \sum_{j=1}^M \sum_{k\in\interi}
    \modulo{\rho_{j, k+1}^{n+1} - \rho_{j, k}^{n+1}}
    \leq \
    &
      (1 + 8 \, \dt \, \mathcal{K})
      (1 + \dt \, w_\ndt (0) \, \mathcal{V})
      \sum_{j=1}^M\sum_{k\in\interi}
      \modulo{\rho_{j, k+1}^{n} - \rho_{j, k}^{n}}
    \\
    \leq \
    &
      e^{\dt \left(8 \, \mathcal{K} + w_\ndt(0) \, \mathcal{V}\right)}
      \sum_{j=1}^M\sum_{k\in\interi}
      \modulo{\rho_{j, k+1}^{n}-\rho_{j, k}^{n}},
  \end{align*}
  which applied recursively yields the thesis.
\end{proof}

\begin{proposition}
  \label{prop:LipTimeNL}
  Let $\brho_o \in (\L1 \cap \BV) (\reali; [0,1]^M)$.
  Assume that the CFL condition~\eqref{eq:CFL} holds.
  Then, for $n=0,\dots, N_T-1$,
  \begin{equation}
    \Delta x \sum_{j=1}^M \sum_{k\in\interi} \modulo{\rho_{j,k}^{n+1}-\rho_{j,k}^n}
    \leq
    2 \, \dt \left(
      2 \, V_{\max}\, \norma{\brho_{o}}_{\L1 (\reali)}
      +
      \mathcal{V}\,
      e^{t^n (8 \, \mathcal{K} + w_\ndt (0) \, \mathcal{V})}
      \sum_{j=1}^M \tv(\rho_j^0)\right),
  \end{equation}
  with $\mathcal{K}$ as in~\eqref{eq:lipschitzconstant}, $\mathcal{V}$
  as in~\eqref{eq:VC1} and $V_{\max}$ as in~\eqref{eq:vmax}.
\end{proposition}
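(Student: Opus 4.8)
The plan is to follow the proof of Proposition~\ref{prop:LipTime} almost verbatim, the only substantial change being the convective step, which now uses the nonlocal numerical flux~\eqref{eq:numFluxNL} in place of the Godunov flux. As there, I would start from
\begin{displaymath}
  \modulo{\rho^{n+1}_{j,k} - \rho^n_{j,k}}
  \leq
  \modulo{\rho^{n+1}_{j,k} - \rho^{n+1/2}_{j,k}}
  +
  \modulo{\rho^{n+1/2}_{j,k} - \rho^n_{j,k}},
\end{displaymath}
estimate the two terms on the right separately, and then sum over $j=1,\dots,M$ and $k\in\interi$ after multiplication by $\dx$.

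The relaxation contribution is treated exactly as in Proposition~\ref{prop:LipTime}: the source term is unchanged, so the pointwise bound~\eqref{eq:boundSource} still holds (it involves only the densities, not the convolutions), and, combined with the mass conservation of Lemma~\ref{lem:cons}, which persists for the modified Algorithm, it yields
\begin{displaymath}
  \dx \sum_{j=1}^M \sum_{k\in\interi} \modulo{\rho^{n+1}_{j,k} - \rho^{n+1/2}_{j,k}}
  \leq
  4\,\dt\,V_{\max}\,\norma{\brho_o}_{\L1(\reali)}.
\end{displaymath}

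For the convective step~\eqref{eq:convStepNL} I would write $F_j(u,R)=v_j(R)\,u$, add and subtract $v_j(R^{\ndt,n}_{j,k})\,\rho^n_{j,k-1}$ inside the difference, and use $\rho^n_{j,k-1}\in[0,1]$ together with $\norma{v_j}_{\L\infty}\leq V_{\max}$ and $\norma{v_j'}_{\L\infty}\leq V'_{\max}$ to obtain
\begin{displaymath}
  \modulo{F_j(\rho^n_{j,k},R^{\ndt,n}_{j,k})-F_j(\rho^n_{j,k-1},R^{\ndt,n}_{j,k-1})}
  \leq
  V_{\max}\modulo{\rho^n_{j,k}-\rho^n_{j,k-1}}
  +
  V'_{\max}\modulo{R^{\ndt,n}_{j,k}-R^{\ndt,n}_{j,k-1}}.
\end{displaymath}
Summing over $k$ and invoking property~\eqref{eq:Rtv} of the discrete convolution operator — applied to the forward-looking operator~\eqref{eq:forward} built with the kernel $w_\ndt$ — bounds the $R$-differences by the corresponding $\rho$-differences, so that $\dx\sum_{j,k}\modulo{\rho^{n+1/2}_{j,k}-\rho^n_{j,k}}\leq \dt\,\mathcal{V}\sum_{j,k}\modulo{\rho^n_{j,k+1}-\rho^n_{j,k}}$. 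I would then feed in the spatial $\BV$ bound of Proposition~\ref{prop:BVspaceNL}, namely $\sum_{j,k}\modulo{\rho^n_{j,k+1}-\rho^n_{j,k}}\leq e^{t^n(8\,\mathcal{K}+w_\ndt(0)\,\mathcal{V})}\sum_{j=1}^M\tv(\rho_j^0)$, and add the two contributions; since the convective term here even comes with a factor $\mathcal{V}$ rather than $2\,\mathcal{V}$, the stated inequality follows with room to spare.

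I do not anticipate a genuine obstacle: structurally this is identical to Proposition~\ref{prop:LipTime}, and the nonlocality enters only through the total-variation-diminishing property~\eqref{eq:Rtv}, already established in Lemma~\ref{lem:convOp} for a general admissible kernel. The only point deserving a little care is keeping the two convolutions distinct — $w_\ndt$ in the flux and $w_\nds$ in the source — and checking that~\eqref{eq:Rtv} indeed applies to the flux convolution~\eqref{eq:forward}, which it does, since $w_\ndt\in\C0([0,\ndt];\reali_+)$ with unit integral is an admissible kernel in the sense of Lemma~\ref{lem:convOp}.
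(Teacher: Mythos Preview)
Your proposal is correct and follows essentially the same route as the paper's own proof: split via the triangle inequality, treat the relaxation step exactly as in Proposition~\ref{prop:LipTime} using~\eqref{eq:boundSource} and Lemma~\ref{lem:cons}, and handle the convective step by the Lipschitz estimate on the nonlocal flux together with~\eqref{eq:Rtv} and Proposition~\ref{prop:BVspaceNL}. The only minor difference is that you keep $V_{\max}$ and $V'_{\max}$ separate in the flux estimate, obtaining $\dt\,\mathcal{V}$ for the convective contribution, whereas the paper bounds both partial Lipschitz constants by $\mathcal{V}$ and ends up with $2\,\dt\,\mathcal{V}$; your version is thus slightly sharper but not substantively different.
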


\begin{proof}
  Observe that
  \begin{displaymath}
    \modulo{\rho^{n+1}_{j,k} - \rho^n_{j,k}}
    \leq
    \modulo{\rho^{n+1}_{j,k} - \rho^{n+1/2}_{j,k}}
    +
    \modulo{\rho^{n+1/2}_{j,k} - \rho^n_{j,k}}.
  \end{displaymath}
  We then estimate each term on the right hand side separately.

  By the relaxation step~\eqref{eq:relaxStep} and the
  bound~\eqref{eq:boundSource} we have
  \begin{align*}
    \modulo{\rho^{n+1}_{j,k} - \rho^{n+1/2}_{j,k}}
    = \
    & \dt \modulo{
      S_{j-1}\left( \rho_{j-1,k}^{n+1/2}, \rho_{j,k}^{n+1/2},
      R_{j-1,k}^{n+1/2}, R_{j,k}^{n+1/2}\right)
      -
      S_{j}\left( \rho_{j,k}^{n+1/2}, \rho_{j+1,k}^{n+1/2},
      R_{j,k}^{n+1/2}, R_{j+1,k}^{n+1/2}\right)
      }
    \\
    \leq \
    & \dt \, V_{\max} \left(
      \rho_{j-1,k}^{n+1/2} + 2 \, \rho_{j,k}^{n+1/2} + \rho_{j+1,k}^{n+1/2}\right).
  \end{align*}
  Therefore, thanks to Lemma~\ref{lem:cons}
  \begin{equation}
    \label{eq:15}
    \dx \sum_{j=1}^M\sum_{k \in \interi}
    \modulo{\rho^{n+1}_{j,k} - \rho^{n+1/2}_{j,k}}
    \leq
    \dt \, V_{\max} \sum_{j=1}^M 4 \norma{\rho_j^{n+1/2}}_{\L1 (\reali)}
    = 4 \, \dt \, V_{\max}  \sum_{j=1}^M  \norma{\rho_{j,o}}_{\L1 (\reali)}.
  \end{equation}

  Exploiting the modified convective step~\eqref{eq:convStepNL}, since
  the numerical flux defined in~\eqref{eq:numFluxNL} is Lipschitz
  continuous in both variables with Lipschitz constant
  $\mathcal{V}$~\eqref{eq:VC1}, we have
  \begin{align*}
    \modulo{\rho^{n+1/2}_{j,k} - \rho^n_{j,k}}
    = \
    & \lambda \modulo{F_j\left(\rho_{j,k}^n, R_{j,k}^{\ndt,n}\right)
      -
      F_j\left(\rho_{j,k-1}^n, R_{j,k-1}^{\ndt,n}\right)}
    \\
    \leq \
    & \lambda \, \mathcal{V} \left(
      \modulo{\rho_{j,k}^n- \rho_{j,k-1}^n}
      +
      \modulo{R_{j,k}^{\ndt,n} - R_{j,k-1}^{\ndt,n}}
      \right).
  \end{align*}
  Hence, using also~\eqref{eq:Rtv} and the total variation bound
  provided by Proposition~\ref{prop:BVspaceNL}, we get
  \begin{equation}
    \label{eq:16}
    \dx \sum_{j=1}^M \sum_{k \in \interi}
    \modulo{\rho^{n+1/2}_{j,k} - \rho^n_{j,k}}
    \leq 2 \, \dt \,  \mathcal{V}
    \sum_{j=1}^M \sum_{k \in \interi}
    \modulo{\rho^{n}_{j,k} - \rho^n_{j,k-1}}
    \leq
    2 \, \dt \,  \mathcal{V}
    e^{t^n (8 \, \mathcal{K} + w_\ndt (0) \, \mathcal{V})}
    \sum_{j=1}^M \tv(\rho_j^0).
  \end{equation}

  Collecting together~\eqref{eq:15} and~\eqref{eq:16} yields the
  thesis
  \begin{displaymath}
    \dx \sum_{j=1}^M \sum_{k \in \interi}
    \modulo{\rho^{n+1}_{j,k} - \rho^n_{j,k}}
    \leq
    2 \, \dt \left(
      2 \, V_{\max}\, \norma{\brho_{o}}_{\L1 (\reali)}
      +
      \mathcal{V}\,
      e^{t^n (8 \, \mathcal{K} + w_\ndt (0) \, \mathcal{V})}
      \sum_{j=1}^M \tv(\rho_j^0)\right).
  \end{displaymath}
\end{proof}

Proceeding as in Corollary~\ref{cor:BVspaceTime}, combining the
results of Proposition~\ref{prop:BVspaceNL} and
Proposition~\ref{prop:LipTimeNL} we obtain a $\BV$ estimate in space
and time.

Analogously to Section~\ref{sec:entropyIneq}, a discrete entropy
inequality could be derived also in the case of nonlocal flux,
see~\cite[Proposition~2.8]{AmorimColomboTeixeira}. Indeed,
combining~\cite[Theorem~3.4]{friedrich2018godunov} for the nonlocal
flux and Lemma~\ref{lemma:discreteEntropy} for the treatment of the
source terms we get the following result.
\begin{lemma}
  Let $\brho \in (\L1 \cap \BV) (\reali; [0,1]^M)$. Let the CFL
  condition~\eqref{eq:CFL} hold. Then the approximate solution
  $\brho_\Delta$ constructed through the modified Algorithm satisfies
  the following discrete entropy inequality: for all $j=1, \ldots, M$,
  for $k \in \interi$, for $n=0, \ldots, N_T -1$ and for any
  $c \in [0,1]$
  \begin{align*}
    \modulo{\rho_{j,k}^{n+1} - c}
    -
    \modulo{\rho_{j,k}^{n} - c}
    + \lambda \left(
    \mathscr{F}_j^c\left(\rho_{j,k}^n\right)
    -
    \mathscr{F}_j^c\left(\rho_{j,k-1}^n\right)
    \right)
    \\
    - \dt \, \sgn\left(\rho_{j,k}^{n+1} - c\right)
    \left(
    S_{j-1}\left(\rho_{j-1,k}^{n+1/2}, \rho_{j,k}^{n+1/2}, R_{j-1,k}^{\nds,n+1/2}, R_{j,k}^{\nds,n+1/2}\right)\right.
    \\
    \left.
    -
    S_{j}\left(\rho_{j,k}^{n+1/2}, \rho_{j+1,k}^{n+1/2}, R_{j,k}^{\nds,n+1/2}, R_{j+1,k}^{\nds, n+1/2}\right)
    \right)\\
    + \lambda
    \sgn\left(\rho_{j,k}^{n+1} - c\right) c \,
    \left(v_j\left(R_{j,k+1}^{\ndt,n}\right)
    -
    v_j\left(R_{j,k}^{\ndt,n}\right) \right)
 & \leq 0,
  \end{align*}
  where $R_{j,k}^{\nds, n+1/2}$ and $R_{j,k}^{\ndt,n}$ are defined
  accordingly to~\eqref{eq:disConvOp} and
  \begin{align*}
    \mathscr{F}_j^c(u) =\
    & G_j(u \vee c) - G_j(u \wedge c),
    & \mbox{with }
      G_j(\rho_{j,k}^n) = \
    & \rho_{j,k}^n \, v_j\left(R_{j,k}^{\ndt,n}\right).
  \end{align*}
\end{lemma}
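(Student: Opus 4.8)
The plan is to mirror the operator-splitting argument behind Lemma~\ref{lemma:discreteEntropy}: the modified Algorithm updates $\brho_\Delta$ in two half-steps, and the only feature absent from the local case is a zeroth-order correction carried by the convective half-step, due to the space dependence of the nonlocal numerical flux~\eqref{eq:numFluxNL}. So I would fix $j \in \{1, \ldots, M\}$, $k \in \interi$, $n$ and $c \in [0,1]$, estimate the two half-steps~\eqref{eq:convStepNL} and~\eqref{eq:relaxStep} separately, and add the resulting inequalities.

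For the convective half-step, I would freeze the convolution values $R^{\ndt,n}_{j,\cdot}$, so that $\rho^{n+1/2}_{j,k}$ becomes an explicit function of the cell averages $(\rho^n_{j,\ell})_\ell$; under the CFL condition~\eqref{eq:CFL}, which is strictly more restrictive than the one needed for convergence of the nonlocal Godunov scheme, this map is monotone, see~\cite[Theorem~3.1]{friedrich2018godunov}. Applying the standard monotone-scheme and convexity computation to $u \mapsto \modulo{u - c}$, and using that inserting the constant state $c$ into~\eqref{eq:convStepNL} does not return $c$ but rather $c - \lambda \, c \, \bigl( v_j(R^{\ndt,n}_{j,k+1}) - v_j(R^{\ndt,n}_{j,k}) \bigr)$ because the velocity coefficient changes from one interface to the next, yields the convective cell entropy inequality
\begin{equation*}
  \modulo{\rho^{n+1/2}_{j,k} - c} - \modulo{\rho^{n}_{j,k} - c} + \lambda \bigl( \mathscr{F}_j^c(\rho^n_{j,k}) - \mathscr{F}_j^c(\rho^n_{j,k-1}) \bigr) + \lambda \, c \, \sgn\bigl(\rho^{n+1/2}_{j,k} - c\bigr) \bigl( v_j(R^{\ndt,n}_{j,k+1}) - v_j(R^{\ndt,n}_{j,k}) \bigr) \leq 0,
\end{equation*}
which is exactly~\cite[Theorem~3.4]{friedrich2018godunov}; the numerical entropy flux is consistent with~\eqref{eq:numFluxNL} since $\mathscr{F}_j^c(\rho^n_{j,k}) = v_j(R^{\ndt,n}_{j,k}) \, \modulo{\rho^n_{j,k} - c} = F_j(\rho^n_{j,k} \vee c, R^{\ndt,n}_{j,k}) - F_j(\rho^n_{j,k} \wedge c, R^{\ndt,n}_{j,k})$.

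For the relaxation half-step, from~\eqref{eq:relaxStep} and the elementary inequality $\modulo{a + b} \leq \modulo{a} + \sgn(a + b) \, b$, with $a = \rho^{n+1/2}_{j,k} - c$ and $b = \dt \bigl( S_{j-1}(\cdots) - S_j(\cdots) \bigr)$ (source arguments at level $n+1/2$, convolutions the $R^{\nds,n+1/2}_{j,\cdot}$, which are well defined and $[0,1]$-valued by Lemma~\ref{lem:convOp}), exactly as in the proof of Lemma~\ref{lemma:discreteEntropy}, one obtains
\begin{equation*}
  \modulo{\rho^{n+1}_{j,k} - c} \leq \modulo{\rho^{n+1/2}_{j,k} - c} + \dt \, \sgn\bigl(\rho^{n+1}_{j,k} - c\bigr) \bigl( S_{j-1}(\cdots) - S_j(\cdots) \bigr).
\end{equation*}
Adding this to the convective cell entropy inequality gives the asserted estimate.

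The step I would expect to need the most care — and the only one where the two cited results must genuinely be merged rather than simply quoted — is the reconciliation of the entropy variable in the zeroth-order correction: the convective half-step naturally produces $\sgn(\rho^{n+1/2}_{j,k} - c)$, whereas the statement is phrased with $\sgn(\rho^{n+1}_{j,k} - c)$. Since $\modulo{\rho^{n+1}_{j,k} - \rho^{n+1/2}_{j,k}} \leq 4 \, V_{\mathrm{max}} \, \dt$ by~\eqref{eq:boundSource}, these two sign functions can differ only on cells where both $\rho^{n+1/2}_{j,k}$ and $\rho^{n+1}_{j,k}$ lie within $4 \, V_{\mathrm{max}} \, \dt$ of $c$, and one then replaces the intermediate value by the final one in that term. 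Apart from this bookkeeping, everything is the routine combination of~\cite[Theorem~3.4]{friedrich2018godunov} with the source-term handling of Lemma~\ref{lemma:discreteEntropy}, and no new difficulty is introduced by the presence of two distinct nonlocal kernels (one in the flux, one in the source), since each one enters its own half-step as a frozen coefficient.
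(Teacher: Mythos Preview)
Your overall strategy matches the paper's: it states (without detailed proof) that the result follows by combining \cite[Theorem~3.4]{friedrich2018godunov} for the convective half-step with the source-term handling of Lemma~\ref{lemma:discreteEntropy}, exactly as you propose, and the observation that the two kernels enter their respective half-steps only as frozen coefficients is correct.

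The one place where your sketch does not quite close is the sign reconciliation. Your argument --- that $\sgn(\rho^{n+1/2}_{j,k}-c)$ and $\sgn(\rho^{n+1}_{j,k}-c)$ can differ only when both values lie within $O(\dt)$ of $c$, ``and one then replaces the intermediate value by the final one'' --- does not yield the \emph{exact} inequality in the statement: on the cells where the signs differ, swapping them changes the zeroth-order term by $2\lambda c\,|v_j(R^{\ndt,n}_{j,k+1})-v_j(R^{\ndt,n}_{j,k})|$, which is $O(\dt)$ but not zero. The clean fix is to avoid the intermediate sign altogether. From monotonicity of the frozen-coefficient convective map $H_k$ one has the \emph{sign-free} bound
\[
  \bigl|\rho^{n+1/2}_{j,k}-H_k(c,c)\bigr|
  \le \bigl|\rho^n_{j,k}-c\bigr|
  - \lambda\bigl(\mathscr{F}_j^c(\rho^n_{j,k})-\mathscr{F}_j^c(\rho^n_{j,k-1})\bigr),
\]
with $H_k(c,c)=c-\lambda c\bigl(v_j(R^{\ndt,n}_{j,k})-v_j(R^{\ndt,n}_{j,k-1})\bigr)$. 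Then write, with $s:=\sgn(\rho^{n+1}_{j,k}-c)$,
\[
  \bigl|\rho^{n+1}_{j,k}-c\bigr|
  = s\bigl(\rho^{n+1/2}_{j,k}-H_k(c,c)\bigr)
    + s\bigl(H_k(c,c)-c\bigr)
    + s\,\dt\,(S_{j-1}-S_j)
  \le \bigl|\rho^{n+1/2}_{j,k}-H_k(c,c)\bigr|
    + s\bigl(H_k(c,c)-c\bigr)
    + s\,\dt\,(S_{j-1}-S_j),
\]
and combine. This produces $\sgn(\rho^{n+1}_{j,k}-c)$ in front of \emph{both} the source term and the zeroth-order correction in one stroke, with no approximation; this is the standard device behind \cite[Lemma~2.7]{GoatinRossi} and \cite[Proposition~2.8]{AmorimColomboTeixeira}.
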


The results described in Section~\ref{sec:conv} hold analogously for
the modified Algorithm, given the bounds obtained in the present
section: this ensures the existence of solutions
to~\eqref{eq:nonlocaltransport}.

Uniqueness of solution follows from the Lipschitz continuous
dependence of the solution on the initial data. Differently from
Theorem~\ref{thm:contract}, in the case of \emph{nonlocal} flux
function the solution is not contractive in $\L1$.

\begin{theorem}
  \label{thm:LipDataNL}
  Let $\brho$ and $\bpi$ be two solutions to
  problem~\eqref{eq:nonlocaltransport} in the sense of
  Definition~\ref{def:solNL}, with initial data
  $\brho_o, \, \bpi_o \in (\L1 \cap \BV)(\reali; [0,1]^M)$
  respectively.  Assume $v\in \C2([0,1],\reali)$. Then, for
  a.e.~$t \in [0,T]$,
  \begin{displaymath}
    \sum_{j=1}^M \norma{\rho_j (t) - \pi_j (t)}_{\L1 (\reali)}
    \leq
    e^{\mathcal{C} \, t}\sum_{j=1}^M \norma{\rho_{j,o} - \pi_{j,o}}_{\L1 (\reali)},
  \end{displaymath}
  with $\mathcal{C}$ defined as in~\eqref{eq:22}.
\end{theorem}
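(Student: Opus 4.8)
The plan is to argue on the continuous level by Kru\v zkov's doubling of variables, in the spirit of the proof of Theorem~\ref{thm:contract}; the new feature is that the velocity entering the flux, $V_j = v_j(R_j^\ndt)$ with $R_j^\ndt$ as in~\eqref{eq:Rjtransport}, is now a \emph{nonlocal coefficient} which differs for the two solutions. Write $P_j^\ndt = (\pi_j(t)\ast w_\ndt)$ and $P_j^\nds = (\pi_j(t)\ast w_\nds)$ for the convolution products associated with $\bpi$, and set $\Theta(t) = \sum_{j=1}^M\norma{\rho_j(t) - \pi_j(t)}_{\L1(\reali)}$. Using the entropy inequality of Definition~\ref{def:solNL} for $\rho_j$ with constant $\kappa = \pi_j(s,y)$ and for $\pi_j$ with constant $\kappa = \rho_j(t,x)$, adding the two and performing the usual doubling-of-variables manipulations (as in~\cite[Theorem~3.3]{HoldenRisebro} for the source terms and as in the nonlocal-flux analysis of~\cite{BlandinGoatin,friedrich2018godunov} for the flux), one arrives, for a.e.~$\tau\in[0,T]$, at an estimate of the form
\begin{equation*}
  \Theta(\tau) \le \Theta(0) + \int_0^\tau\bigl( \mathcal{A}(t) + \mathcal{B}(t) \bigr)\d{t},
\end{equation*}
where $\mathcal{A}(t)$ collects the contributions of the \emph{nonlocal source} terms and $\mathcal{B}(t)$ those produced by the mismatch $V_j^\rho\neq V_j^\pi$ of the two velocity fields in the flux, together with the $\kappa\,\del_x V_j$ terms in Definition~\ref{def:solNL}. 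The term $\mathcal{B}(t)$ is the genuinely new ingredient compared to Theorem~\ref{thm:contract}, where --- the flux being local --- the corresponding flux contribution vanishes identically (cf.~\eqref{eq:10}).

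For the source part one repeats essentially verbatim the computation in the proof of Theorem~\ref{thm:contract}. Since no contractivity is claimed, it is enough to bound the differences of the maps $S_j$ in absolute value: by their Lipschitz continuity in each argument (Lemma~\ref{lem:Lipsource}, with constant $\mathcal{K}$ as in~\eqref{eq:lipschitzconstant}) and by $\norma{(\rho_j-\pi_j)\ast w_\nds}_{\L1(\reali)}\le\norma{\rho_j-\pi_j}_{\L1(\reali)}$ (the $\L1$ analogue of~\eqref{eq:R-2}), one gets $\mathcal{A}(t)\le C_1\,\Theta(t)$, with $C_1$ a multiple of $\mathcal{K}$.

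The heart of the matter is the estimate of $\mathcal{B}(t)$, which is controlled, lane by lane, by expressions of the type
\begin{equation*}
  \int_\reali\Bigl( \modulo{\del_x\rho_j}\,\modulo{v_j(R_j^\ndt) - v_j(P_j^\ndt)} + \rho_j\,\modulo{\del_x\bigl( v_j(R_j^\ndt) - v_j(P_j^\ndt) \bigr)} \Bigr)\d{x},
\end{equation*}
plus lower-order terms of the same nature. Each factor is handled as follows. (i) $\modulo{v_j(R_j^\ndt) - v_j(P_j^\ndt)}\le V'_{\max}\,w_\ndt(0)\,\norma{\rho_j-\pi_j}_{\L1(\reali)}$ and, using $v\in\C2$, $\modulo{v_j'(R_j^\ndt) - v_j'(P_j^\ndt)}\le\norma{v_j''}_{\L\infty([0,1])}\,w_\ndt(0)\,\norma{\rho_j-\pi_j}_{\L1(\reali)}$ --- this is the only place where the $\C2$ assumption is used. (ii) Integrating by parts in~\eqref{eq:Rjtransport} and using $w_\ndt\in\C1$, $w_\ndt'\le0$, $\int_\reali w_\ndt = 1$ and $0\le\rho_j\le1$ gives $\norma{\del_x R_j^\ndt}_{\L\infty(\reali)}\le 2\,w_\ndt(0)$ and $\norma{\del_x(R_j^\ndt - P_j^\ndt)}_{\L1(\reali)}\le 2\,w_\ndt(0)\,\norma{\rho_j-\pi_j}_{\L1(\reali)}$. (iii) $\int_\reali\modulo{\del_x\rho_j}\,\d{x} = \tv(\rho_j(t))$ is controlled by the a priori $\BV$ bound of Proposition~\ref{prop:BVspaceNL}, and $\int_\reali\rho_j(t,x)\,\d{x} = \norma{\rho_j(t)}_{\L1(\reali)}\le\norma{\brho_o}_{\L1(\reali)}$ by mass conservation, Lemma~\ref{lem:cons}. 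Writing $\del_x\bigl( v_j(R_j^\ndt) - v_j(P_j^\ndt) \bigr) = \bigl( v_j'(R_j^\ndt) - v_j'(P_j^\ndt) \bigr)\del_x P_j^\ndt + v_j'(R_j^\ndt)\,\del_x\bigl( R_j^\ndt - P_j^\ndt \bigr)$ and combining (i)--(iii), we get $\mathcal{B}(t)\le C_2\,\Theta(t)$, with $C_2$ depending on $V'_{\max}$, $\norma{v''}_{\L\infty}$, $w_\ndt(0)$, $\norma{\brho_o}_{\L1(\reali)}$ and the $\BV$ bound of Proposition~\ref{prop:BVspaceNL}. Setting $\mathcal{C} = C_1 + C_2$ as in~\eqref{eq:22}, we conclude $\Theta(\tau)\le\Theta(0) + \mathcal{C}\int_0^\tau\Theta(t)\,\d{t}$, and Gronwall's inequality yields the claim.

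I expect the delicate part to be twofold: first, the doubling-of-variables bookkeeping required to isolate $\mathcal{B}(t)$ in the (near-)divergence form above --- in particular, seeing how the $\kappa\,\del_x V_j$ terms of Definition~\ref{def:solNL} recombine with the \emph{mismatched} entropy fluxes $\modulo{\rho_j-\pi_j}V_j^\rho$ and $\modulo{\rho_j-\pi_j}V_j^\pi$ coming from the two entropy inequalities; and second, the integration-by-parts estimates on $\del_x R_j^\ndt$ and $\del_x(R_j^\ndt - P_j^\ndt)$, where the standing assumptions on the flux kernel ($w_\ndt\in\C1$, $w_\ndt'\le0$, $\int_\reali w_\ndt = 1$, $\spt w_\ndt\subseteq[0,\ndt]$), the invariance $0\le\rho_j\le1$ and the a priori $\BV$ bound all come into play. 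The remaining manipulations are routine.
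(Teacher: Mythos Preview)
Your proposal is correct and follows essentially the same route as the paper: doubling of variables yields the decomposition into source terms~\eqref{eq:19} and flux-mismatch terms~\eqref{eq:20}--\eqref{eq:21}, each estimated exactly as you outline (Lipschitz continuity of $S_j$ for the source, the bounds on $\partial_x R_j^\ndt$ and on $v_j(R_j^\ndt)-v_j(P_j^\ndt)$ for the flux), followed by Gronwall. The only cosmetic difference is that you control $\int_\reali \rho_j\,|\partial_x(R_j^\ndt - P_j^\ndt)|\,\d{x}$ via $\rho_j\le 1$ and the $\L1$ norm of $\partial_x(R_j^\ndt - P_j^\ndt)$, whereas the paper uses a pointwise bound that brings in $\norma{w_\ndt'}_{\L\infty([0,\ndt])}$; your estimate is in fact slightly sharper, so the resulting constant is still dominated by the paper's $\mathcal{C}$ in~\eqref{eq:22}.
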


\begin{proof}
  The doubling of variables technique~\cite{Kruzkov} allows to get the
  following estimate, see~\cite[Lemma~4]{NARWA2019} for the treatment
  of the nonlocal flux, while the source terms are treated similarly
  to~\cite[Theorem~3.3]{HoldenRisebro}: any $j=1, \ldots, M$,
  \begin{align}
    \nonumber
    \int_\reali \modulo{\rho_j (\tau,x) - \pi_j (\tau,x)} \d{x}
    \leq \
    &
      \int_\reali \modulo{\rho_j (0,x) - \pi_j (0,x)} \d{x}
    \\
    \label{eq:19}
    & + \int_0^\tau \int_\reali
      \modulo{ \mathcal{S}(\brho, \boldsymbol{R}^\nds, j)
      - \mathcal{S}(\bpi, \boldsymbol{P}^\nds, j)}
      \d{x} \d{t}
    \\
    \label{eq:20}
    &+ \int_0^\tau \int_\reali
      \modulo{v_j (R_j^\ndt) - v_j(P_j^\ndt)} \modulo{\partial_x \rho_j(t,x)}
      \d{x} \d{t}
    \\
    \label{eq:21}
    & + \int_0^\tau \int_\reali
      \modulo{\partial_x v_j (R_j^\ndt) - \partial_x v_j(P_j^\ndt)}
      \modulo{\rho_j(t,x)}
      \d{x} \d{t},
  \end{align}
  where for the source terms we use the notation introduced in the
  proof of Theorem~\ref{thm:contract}, while for the kernel we refer
  to~\eqref{eq:Rj}, emphasizing which kernel, $w_\ndt$ or $w_\nds$, is
  used. We remark that $\partial_x \rho$ should be understood in the
  sense of measures.

  To bound the term in~\eqref{eq:19}, exploit the Lipschitz continuity
  of the map $S_j$~\eqref{eq:source} in the source term:
  \begin{align*}
    \int_0^\tau \int_\reali
    \modulo{ \mathcal{S}(\brho, \boldsymbol{R}^\nds, j)
    - \mathcal{S}(\bpi, \boldsymbol{P}^\nds, j)}
    \d{x} \d{t}
    \leq \
    & \mathcal{K}  \int_0^\tau
      \left(
      \norma{\rho_{j-1} (t) - \pi_{j-1}(t)}_{\L1 (\reali)}
      \right.
    \\
    & \quad + 2 \, \norma{\rho_{j} (t) - \pi_{j}(t)}_{\L1 (\reali)}
      + \norma{\rho_{j+1} (t) - \pi_{j+1}(t)}_{\L1 (\reali)}
    \\
    & \quad
      + \norma{R_{j-1}^\nds (t) - P_{j-1}^\nds(t)}_{\L1 (\reali)}
      + 2 \, \norma{R_{j}^\nds (t) - P_{j}^\nds(t)}_{\L1 (\reali)}
    \\
    & \quad
      \left.+ \norma{R_{j+1}^\nds (t) - P_{j+1}^\nds(t)}_{\L1 (\reali)}
      \right) \d{t}.
  \end{align*}
  Observe that for each $j=1, \ldots, M$
  \begin{displaymath}
    \norma{R_{j}^\nds (t) - P_{j}^\nds(t)}_{\L1 (\reali)}
    \leq
    \norma{\rho_{j} (t) - \pi_{j}(t)}_{\L1 (\reali)},
  \end{displaymath}
  since $\int_\reali w_\nds =1$. Therefore
  \begin{equation}
    \label{eq:19ok}
    \sum_{j=1}^M [\eqref{eq:19}]
    \leq
    4 \, \mathcal{K}
    \sum_{j=1}^M
    \int_0^\tau \norma{\rho_{j} (t) - \pi_{j}(t)}_{\L1 (\reali)} \d{t}.
  \end{equation}

  Concerning~\eqref{eq:20}, note that
  \begin{displaymath}
    \modulo{v_j (R_j^\ndt) - v_j(P_j^\ndt)}
    \leq
    w_\ndt (0)  \, \norma{v'_j}_{\L\infty ([0,1]; \reali)}
    \norma{\rho_{j} (t) - \pi_{j}(t)}_{\L1 (\reali)},
  \end{displaymath}
  thus
  \begin{equation}
    \label{eq:20ok}
    \begin{aligned}
      \sum_{j=1}^M [\eqref{eq:20}] \leq \ & w_\ndt (0) \, V'_{\max}
      \sum_{j=1}^M \int_0^\tau \tv\left(\rho_j (t)\right)
      \norma{\rho_{j} (t) - \pi_{j}(t)}_{\L1 (\reali)} \d{t}
      \\
      \leq \ & w_\ndt (0) \, V'_{\max} \left(\sum_{j=1}^M \sup_{t \in
          [0,\tau]} \tv\left(\rho_j (t)\right)\right) \left(
        \sum_{j=1}^M \int_0^\tau \norma{\rho_{j} (t) -
          \pi_{j}(t)}_{\L1 (\reali)} \d{t}\right).
    \end{aligned}
  \end{equation}

  Pass now to~\eqref{eq:21}. Observe first that
  \begin{align*}
    \modulo{\partial_x R_j^\ndt (t,x) }= \
    & \modulo{\partial_x \left(\rho_j(t) * w_\ndt\right) (x)}
    \\
    = \
    & \modulo{- \int_x^{x + \ndt} \rho_j(t,y)\, w'_{\ndt}(x-y) \d{y}
      + \rho_j (t, x+\ndt) \, w_{\ndt} (\ndt)
      - \rho_j (t, x) \, w_{\ndt} (0)}
    \\
    \leq \
    & \modulo{\int_0^\ndt \rho_j(t, u+x)\, w'_{\ndt}(u) \d{u}}
      + \norma{\rho_j (t)}_{\L\infty (\reali)} \left(
      w_{\ndt} (\ndt)+ w_{\ndt} (0)
      \right)
    \\
    \leq \
    & \norma{\rho_j (t)}_{\L\infty (\reali)} \left(
      \int_0^\ndt \modulo{w'_{\ndt}(u)} \d{u}
      + w_{\ndt} (\ndt)+ w_{\ndt} (0)
      \right)
    \\
    = \
    & \norma{\rho_j (t)}_{\L\infty (\reali)} \left(
      - \int_0^\ndt w'_{\ndt}(u) \d{u}
      + w_{\ndt} (\ndt)+ w_{\ndt} (0)
      \right)
    \\
    = \
    & 2 \, w_{\ndt} (0) \,  \norma{\rho_j (t)}_{\L\infty (\reali)},
  \end{align*}
  since the kernel $w_{\ndt}$ is such that $w'_{\ndt} \leq 0$. Hence,
  \begin{align*}
    &\modulo{\partial_x v_j (R_j^\ndt) - \partial_x v_j(P_j^\ndt)}
    \\
    \leq \
    & \modulo{v'_j(R_j^\ndt) - v'_j(P_j)}\modulo{\partial_xR_j^\ndt}
      +
      \modulo{v'_j(P_j)}\modulo{\partial_x R_j^\ndt - \partial_x P_j^\ndt}
    \\
    \leq \
    & \norma{v''_j}_{\L\infty ([0,1])} \modulo{R_j^\ndt - P_j^\ndt}
      \, 2 \, w_{\ndt} (0) \,  \norma{\rho_j (t)}_{\L\infty (\reali)}
    \\
    & +
      \norma{v'_j}_{\L\infty ([0,1])}
      \modulo{\int_x^{x + \ndt} \!\!\!\left(\pi_j - \rho_j \right) \!(t,y)
      \, w'_{\ndt}(x-y) \d{y}
      + \left(\rho_j - \pi_j\right) \!(t, x+\ndt) \, w_{\ndt} (\ndt)
      - \left(\rho_j - \pi_j\right) \!(t, x) \, w_{\ndt} (0)}
    \\
    \leq \
    & \left(2 \, \left(w_\ndt(0)\right)^2 \, \norma{v''_j}_{\L\infty ([0,1])}
      +
      \norma{v'_j}_{\L\infty ([0,1])}
      \norma{w'_\ndt}_{\L\infty ([0, \ndt])}
      \right)\norma{\rho_j(t)- \pi_j(t)}_{\L1 (\reali)}
    \\
    & + w_\ndt(0) \, \norma{v'_j}_{\L\infty ([0,1])}
      \left(
      \modulo{\left(\rho_j - \pi_j\right)} (t, x+\ndt)
      +
      \modulo{\left(\rho_j - \pi_j\right)} (t, x)
      \right).
  \end{align*}
  Therefore, since the total mass is conserved and
  $\rho_j(t,x) \in [0,1]$ for all $j=1,\ldots, M$, $t \in [0,\tau]$
  and $x \in \reali$ by Lemma~\ref{lem:invNL},
  \begin{equation}
    \label{eq:21ok}
    \begin{aligned}
      \sum_{j=1}^M[\eqref{eq:21}] \leq \ & \left(2 \,
        \left(w_\ndt(0)\right)^2 \, V''_{\max} + V'_{\max}
        \norma{w'_\ndt}_{\L\infty ([0, \ndt])}
      \right)\norma{\brho_o}_{\L1 (\reali)} \sum_{j=1}^M\int_0^\tau
      \norma{\rho_j(t)- \pi_j(t)}_{\L1 (\reali)} \d{t}
      \\
      & + 2 \, w_\ndt(0) \, V'_{\max} \sum_{j=1}^M\int_0^\tau
      \norma{\rho_j(t)- \pi_j(t)}_{\L1 (\reali)} \d{t},
    \end{aligned}
  \end{equation}
  where
  \begin{displaymath}
    V''_{\max} = \norma{\bv''}_{\C0([0,1]; \reali^M)}
    = \max_{j=1, \ldots, M} \norma{v''_j}_{\L\infty([0,1]; \reali)}.
  \end{displaymath}
  Collecting together~\eqref{eq:19ok},\eqref{eq:20ok}
  and~\eqref{eq:21ok} we obtain
  \begin{equation}
    \label{eq:almostThere}
    \int_\reali \modulo{\rho_j (\tau,x) - \pi_j (\tau,x)} \d{x}
    \leq
    \int_\reali \modulo{\rho_j (0,x) - \pi_j (0,x)} \d{x}
    + \mathcal{C} \,
    \sum_{j=1}^M\int_0^\tau
    \norma{\rho_j(t)- \pi_j(t)}_{\L1 (\reali)} \d{t},
  \end{equation}
  where
  \begin{equation}
    \label{eq:22}
    \begin{aligned}
      \mathcal{C} = \ & 4 \, \mathcal{K} + 2 \, w_\ndt(0) \, V'_{\max}
      + w_{\ndt}(0) \, V'_{\max} \left(\sum_{j=1}^M \sup_{t \in
          [0,\tau]} \tv\left(\rho_j(t)\right)\right)
      \\
      & + \left(2 \, \left(w_\ndt(0)\right)^2 \, V''_{\max} +
        V'_{\max} \norma{w'_\ndt}_{\L\infty ([0, \ndt])}
      \right)\norma{\brho_o}_{\L1 (\reali)}.
    \end{aligned}
  \end{equation}
  An application of Gronwall Lemma to~\eqref{eq:almostThere} yields
  the desired result.
\end{proof}

The following theorem, analogous to Theorem~\ref{thm:main}, collects
the main result on problem~\eqref{eq:nonlocaltransport}, as well as
some \emph{a priori} estimates on its solution.
\begin{theorem}
  \label{thm:mainNL}
  Let $\brho_o \in (\L1 \cap \BV)(\reali; [0,1]^M)$. Assume
  $v\in \C2([0,1],\reali)$. Then, for all $T>0$,
  problem~\eqref{eq:nonlocaltransport} has a unique solution
  $\brho \in \C0([0,T]; \L1(\reali; [0,1]^M))$ in the sense of
  Definition~\ref{def:solNL}. Moreover, the following estimates hold:
  for any $t \in [0,T]$
  \begin{align*}
    \norma{\brho(t)}_{\L1(\reali)} = \
    & \sum_{j=1}^M \norma{\rho_j(t)}_{\L1(\reali)}=
      \norma{\brho_o}_{\L1 (\reali)},
    \\
    \mbox{for }  j = 1, \ldots, M:
    & \quad  0\leq \rho_j(t,x)\leq 1,
    \\
    \sum_{j=1}^M\tv\left(\rho_j(t)\right) \leq \
    & e^{t \left(8 \, \mathcal{K} + w_{\ndt} (0) \, \mathcal{V}\right)}
      \sum_{j=1}^M \tv(\rho_{j,o}).
  \end{align*}
\end{theorem}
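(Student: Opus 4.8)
The plan is to read the theorem off from the ingredients already assembled for the modified Algorithm, following step by step the route used for Theorem~\ref{thm:main}. For existence I would first invoke compactness: Lemma~\ref{lem:invNL} provides the uniform bound $\brho_\Delta \in [0,1]^M$, Lemma~\ref{lem:cons} (which still applies to the modified Algorithm) provides the uniform $\L1$ bound, and the space--time total variation estimate obtained by combining Propositions~\ref{prop:BVspaceNL} and~\ref{prop:LipTimeNL} provides uniform $\BV$ control; Helly's theorem then yields a subsequence $\brho_\Delta \to \brho$ in $\Lloc1([0,T]\times\reali)$ with $\brho \in \L\infty([0,T]\times\reali; [0,1]^M)$ and conserved total mass. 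A Lax--Wendroff type computation (\cite[Theorem~12.1]{LeVeque}) shows $\brho$ satisfies the weak formulation in Definition~\ref{def:solNL}; the only extra care concerns the nonlocal flux $\rho_j \, v_j(\rho_j * w_\ndt)$, which passes to the limit because the discrete convolution~\eqref{eq:disConvOp} built with $w_\ndt$ converges to $\rho_j * w_\ndt$ (using $\brho_\Delta \to \brho$ in $\Lloc1$ together with the uniform $\L\infty$ bound) and $v_j$ is continuous. Passing the discrete entropy inequality established just above to the limit, as in Section~\ref{sec:conv} and, for the extra term carrying $\partial_x V_j$, as in~\cite[Theorem~3.4]{friedrich2018godunov}, gives the entropy condition of Definition~\ref{def:solNL}. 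Finally Proposition~\ref{prop:LipTimeNL} upgrades $\brho$ to $\C0([0,T]; \L1(\reali; [0,1]^M))$.

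Next I would derive the three displayed estimates and uniqueness. The $\L1$ identity and the pointwise bounds $0 \leq \rho_j(t,x) \leq 1$ are obtained simply by passing the corresponding properties of $\brho_\Delta$ (Lemmas~\ref{lem:cons} and~\ref{lem:invNL}) to the limit. For the variation bound I would combine the lower semicontinuity of $\tv$ under $\L1$ convergence with Proposition~\ref{prop:BVspaceNL} and the fact that cell--averaging does not increase total variation, so that $\tv(\rho_j^0) \leq \tv(\rho_{j,o})$ and hence $\sum_j \tv(\rho_j(t)) \leq e^{t(8\,\mathcal{K}+w_\ndt(0)\,\mathcal{V})}\sum_j \tv(\rho_{j,o})$. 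Uniqueness is then immediate from Theorem~\ref{thm:LipDataNL} (which is where the standing hypothesis $v \in \C2([0,1],\reali)$ enters): applying its estimate to two solutions with the same initial datum $\brho_o = \bpi_o$ makes the right--hand side vanish, whence $\sum_j \norma{\rho_j(t) - \pi_j(t)}_{\L1(\reali)} = 0$ for a.e.~$t \in [0,T]$, i.e.~$\brho \equiv \bpi$.

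The one genuinely new feature relative to the local--flux case, and the point I would flag as most delicate, is that one cannot hope for the sharper bound $\sum_j \tv(\rho_j(t)) \leq \sum_j \tv(\rho_{j,o})$: $\L1$-contractivity, and with it the improved estimate~\eqref{eq:cons2} of Corollary~\ref{cor:niceCons}, is lost once the flux becomes nonlocal, so the exponentially growing bound coming directly from Proposition~\ref{prop:BVspaceNL} is the best one can state. Apart from that, the whole argument is a transcription of Sections~\ref{sec:numScheme}--\ref{sec:conv} with the numerical flux~\eqref{eq:numFlux} replaced by~\eqref{eq:numFluxNL}; the steps requiring the most attention are the two limit passages just mentioned — in the nonlocal flux term and in the discrete entropy inequality — both of which are by now routine.
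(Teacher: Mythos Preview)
Your proposal is correct and follows precisely the route implicit in the paper: the theorem is not given a standalone proof there but is stated as a summary of the ingredients already assembled in Section~\ref{sec:ext} (Lemma~\ref{lem:invNL}, Lemma~\ref{lem:cons}, Propositions~\ref{prop:BVspaceNL}--\ref{prop:LipTimeNL}, the discrete entropy inequality, and Theorem~\ref{thm:LipDataNL}), with the convergence argument carried over verbatim from Section~\ref{sec:conv}. Your observation that the sharp $\tv$ bound of Corollary~\ref{cor:niceCons} is unavailable here---forcing reliance on the exponentially growing estimate~\eqref{eq:BVspaceNL}---is exactly the distinction the paper makes between Theorems~\ref{thm:main} and~\ref{thm:mainNL}.
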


\section{Numerical experiments}
\label{sec:numEx}

We present now some numerical examples. We divide this section in two
parts: in the first part, we discuss an example with local flux and
\emph{nonlocal} source, as in~\eqref{eq:model}, while in the second
part we focus on \emph{nonlocal} flux and source, as
in~\eqref{eq:nonlocaltransport}. For simplicity, we restrict ourselves
to only two lanes, i.e.~$M=2$, and scaling parameter $K=1$.

\subsection{Local flux and nonlocal source: results for model
  \texorpdfstring{\eqref{eq:model}}{}}
\label{sec:localFlux}

The first example is inspired by~\cite{HoldenRisebro}. We consider the
following velocity functions
\begin{align*}
  v_1(\rho)=1.5(1-\rho)\quad\text{and}\quad v_2(\rho)=2.5(1-\rho)
\end{align*}
and the initial data
\begin{align*}
  \rho_{1,o}(x)=\rho_{2,o}(x)=\sin(\pi x/2)^2.
\end{align*}
Figure~\ref{fig:N2_local} displays the density profiles with three
different source terms at times $T=0.75$ (left column) and $T=1.5$
(right column). We use the nonlocal source term~\eqref{eq:source} with
both~\eqref{eq:forward} and~\eqref{eq:backfor}, and constant kernel,
namely~$w_\nds(x)=1/\nds$ for ~\eqref{eq:forward} with $\nds=0.5$, and
$w_\nds(x)=1/(2\nds)$ for~\eqref{eq:backfor} with
$\nds=0.25$. Therefore, both nonlocal models have an interaction range
equal to $0.5$. To emphasize the influence of the nonlocality, we
include also a local version of the source term~\eqref{eq:source} with
\begin{equation}\label{eq:local_source}
  R_j=\rho_j(t,x).
\end{equation}
Notice that such a local version differs from that used
in~\cite{HoldenRisebro}: Here the lane changing rate is also
proportional to the density in the receiving lane.  In the
simulations, we consider $\Delta x=0.01$ and $\dt$ given by an
adaptive version of the CFL condition~\eqref{eq:CFL}, where
$\mathcal{V}$ is computed at each time step using finite differences
for the derivative of $v_j$.

As can be seen in Figure~\ref{fig:N2_local}, different source terms
give rise to slight differences for small times, but as the time grows
they become more significant. The nonlocal source terms transport mass
faster from the slower lane (lane 1) to the faster one in comparison
to the local model. Interestingly, there is only a slight difference
between the two nonlocal models with forward looking
kernel~\eqref{eq:forward} and back- and forward looking
kernel~\eqref{eq:backfor}. This is probably due to the fact that the
nonlocal range and the form of the kernel are equal.

\setlength\fwidth{0.85\textwidth}
\begin{figure}[htb]
  \centering \tikzsetnextfilename{2Lanes_example_local_density}
  \input{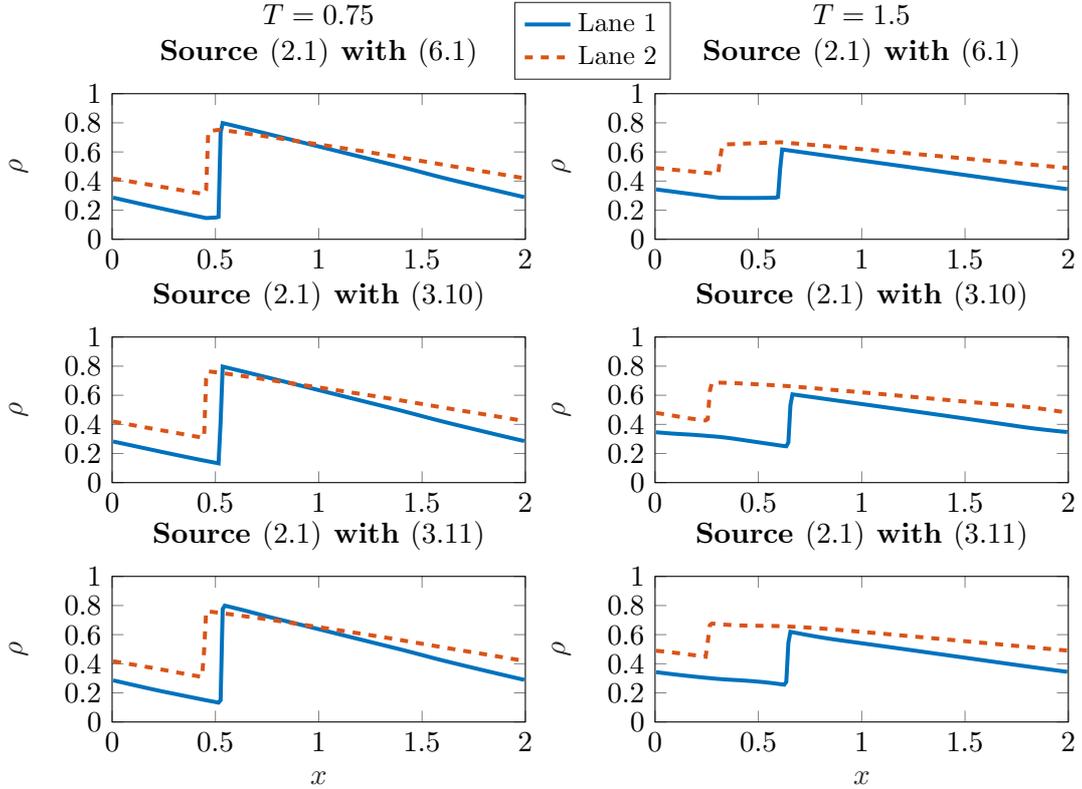}
  \caption{Density profiles on each lane at $T=0.75$ (left column) and
    $T=1.5$ (right column). The source term~\eqref{eq:source} is
    computed with~\eqref{eq:local_source} (top row),
   ~\eqref{eq:forward} with $w_\nds(x)=1/\nds$ and $\nds=0.5$ (middle
    row) and~\eqref{eq:backfor} with $w_\nds(x)=1/(2\nds)$ and
    $\nds =0.25$ (bottom row).}
  \label{fig:N2_local}
\end{figure}

These observations are also supported by the evolution of the
$\L1$-norm over time, see Figure~\ref{fig:N2_L1norm}. We can see that
the nonlocal models transport the density faster from lane 1 to lane
2. In addition, the model with the forward looking
kernel~\eqref{eq:forward} is a bit faster than the one
with~\eqref{eq:backfor}.

\setlength\fwidth{0.75\textwidth}
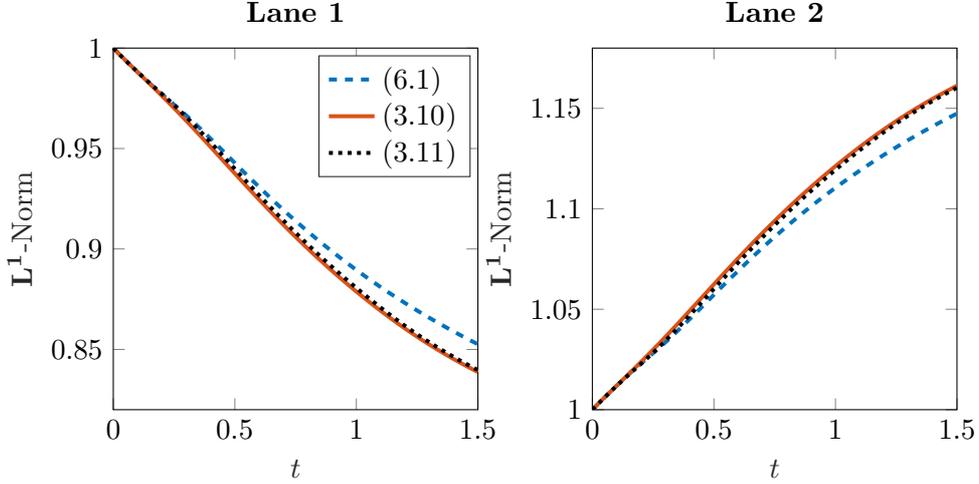
\begin{figure}
  \centering \tikzsetnextfilename{2Lanes_example_local_L1norm}
%
%
\definecolor{mycolor1}{rgb}{0.00000,0.44700,0.74100}%
\definecolor{mycolor2}{rgb}{0.85000,0.32500,0.09800}%
\definecolor{mycolor3}{rgb}{0,0,0}
\begin{tikzpicture}

\begin{axis}[%
width=0.411\fwidth,
height=0.411\fwidth,
at={(0\fwidth,0\fwidth)},
scale only axis,
xmin=0,
xmax=1.5,
xlabel style={font=\color{white!15!black}},
xlabel={$t$},
ymin=0.82,
ymax=1,
ylabel style={font=\color{white!15!black}},
ylabel={$\L1$-Norm},
axis background/.style={fill=white},
title style={font=\bfseries},
title={Lane 1},
legend style={legend cell align=left, align=left, draw=white!15!black}
]
\addplot [color=mycolor1, line width=1.5pt, dashed]
  table[row sep=crcr]{%
0	1\\
0.02	0.997528589870899\\
0.05	0.993913877337277\\
0.0800000000000001	0.990374798787232\\
0.12	0.985737532763329\\
0.14	0.983472345783477\\
0.16	0.981268950456699\\
0.19	0.978063788860636\\
0.22	0.974941193696517\\
0.26	0.970789788206917\\
0.28	0.968665423832083\\
0.3	0.966488421856433\\
0.32	0.964256787753156\\
0.35	0.960819311509556\\
0.38	0.957302983080115\\
0.42	0.952543092680881\\
0.45	0.948901064075224\\
0.53	0.939113044080837\\
0.56	0.935552470618159\\
0.64	0.926172289753597\\
0.67	0.922757460841017\\
0.71	0.918286582590649\\
0.73	0.916105117215306\\
0.76	0.912911125332568\\
0.8	0.908705460310401\\
0.83	0.905643940750125\\
0.86	0.902647017228047\\
0.89	0.899731748308189\\
0.93	0.895918627313883\\
0.96	0.89314608557998\\
0.99	0.890435969600829\\
1.03	0.886915126025228\\
1.06	0.884340565091345\\
1.1	0.881008364168676\\
1.13	0.87857014522426\\
1.16	0.876186752368409\\
1.2	0.873095359678381\\
1.23	0.870834695182675\\
1.26	0.868627012080001\\
1.3	0.865752538579926\\
1.34	0.862966905831781\\
1.38	0.860255925974851\\
1.42	0.857626693139689\\
1.46	0.855065905604945\\
1.5	0.852579954895998\\
};
\addlegendentry{\eqref{eq:local_source}}

\addplot [color=mycolor2, line width=1.5pt]
  table[row sep=crcr]{%
0	1\\
0.02	0.997526215617887\\
0.04	0.995105926959107\\
0.0700000000000001	0.991556077386039\\
0.17	0.979838551411565\\
0.21	0.975019871267615\\
0.25	0.970112219113134\\
0.28	0.966364756224102\\
0.31	0.962555192086168\\
0.34	0.958687870796712\\
0.38	0.953459604063126\\
0.44	0.945527485670088\\
0.52	0.934948298087339\\
0.56	0.929717623427947\\
0.6	0.92455412665331\\
0.64	0.919473691135103\\
0.67	0.915725617395473\\
0.7	0.912035681196825\\
0.73	0.908407188414672\\
0.76	0.904842803449811\\
0.79	0.901344624924784\\
0.82	0.897914094137549\\
0.85	0.894552221111401\\
0.88	0.891259682674259\\
0.91	0.888036689119989\\
0.94	0.884883247801038\\
0.97	0.881799082574971\\
1	0.878783678942611\\
1.03	0.875836398191709\\
1.06	0.872956374814071\\
1.09	0.870142700596662\\
1.12	0.867395079548412\\
1.15	0.864716036138416\\
1.18	0.862108545337211\\
1.21	0.859575412692491\\
1.24	0.857118234064836\\
1.27	0.854735870310446\\
1.3	0.852426364767277\\
1.33	0.850187798757269\\
1.36	0.84801830736841\\
1.39	0.845915747898281\\
1.42	0.843877978417049\\
1.45	0.841903193344432\\
1.48	0.839989342064749\\
1.5	0.83874624466897\\
};
\addlegendentry{\eqref{eq:forward}}

\addplot [color=mycolor3, line width=1.5pt, dotted]
  table[row sep=crcr]{%
0	1\\
0.02	0.997527423750098\\
0.04	0.995107539975272\\
0.0700000000000001	0.991562033231606\\
0.1	0.988109883634589\\
0.12	0.985863705111237\\
0.15	0.982576575126459\\
0.22	0.975012950083079\\
0.24	0.972787914371553\\
0.26	0.970507481233408\\
0.28	0.968167131469841\\
0.3	0.965767645025885\\
0.32	0.963313220020683\\
0.35	0.959542237966918\\
0.38	0.955685430064565\\
0.42	0.950451185919589\\
0.48	0.942507085175842\\
0.54	0.934575843398724\\
0.58	0.929347307383392\\
0.62	0.924192048828007\\
0.65	0.92038349965186\\
0.68	0.916630497129815\\
0.71	0.912936905369881\\
0.74	0.909305651002319\\
0.77	0.905738781794894\\
0.8	0.902237740226723\\
0.83	0.898803244634573\\
0.86	0.89543558638817\\
0.89	0.892134682119823\\
0.92	0.888900035943571\\
0.95	0.885730938965847\\
0.98	0.882626445730314\\
1.01	0.879585394764227\\
1.04	0.876606539873998\\
1.07	0.873688471506777\\
1.1	0.870829715288272\\
1.13	0.868028724156052\\
1.16	0.865285964118873\\
1.19	0.862613811518728\\
1.22	0.860032116031457\\
1.25	0.857542863429953\\
1.28	0.855140930746721\\
1.31	0.852820804297735\\
1.34	0.850577722432284\\
1.37	0.848407105914913\\
1.4	0.846304987115678\\
1.43	0.844267885979054\\
1.46	0.842292323467555\\
1.49	0.840375417374121\\
1.5	0.839748954544896\\
};
\addlegendentry{\eqref{eq:backfor}}

\end{axis}

\begin{axis}[%
width=0.411\fwidth,
height=0.411\fwidth,
at={(0.54\fwidth,0\fwidth)},
scale only axis,
xmin=0,
xmax=1.5,
xlabel style={font=\color{white!15!black}},
xlabel={$t$},
ymin=1,
ymax=1.18,
ylabel style={font=\color{white!15!black}},
ylabel={$\L1$-Norm},
axis background/.style={fill=white},
title style={font=\bfseries},
title={Lane 2}
]
\addplot [color=mycolor1, line width=1.5pt, dashed, forget plot]
  table[row sep=crcr]{%
0	1\\
0.02	1.0024714101291\\
0.05	1.00608612266272\\
0.0800000000000001	1.00962520121277\\
0.12	1.01426246723667\\
0.14	1.01652765421652\\
0.16	1.0187310495433\\
0.19	1.02193621113936\\
0.22	1.02505880630348\\
0.26	1.02921021179308\\
0.28	1.03133457616792\\
0.3	1.03351157814357\\
0.32	1.03574321224684\\
0.35	1.03918068849044\\
0.38	1.04269701691989\\
0.42	1.04745690731912\\
0.45	1.05109893592478\\
0.53	1.06088695591916\\
0.56	1.06444752938184\\
0.64	1.0738277102464\\
0.67	1.07724253915898\\
0.71	1.08171341740935\\
0.73	1.08389488278469\\
0.76	1.08708887466743\\
0.8	1.0912945396896\\
0.83	1.09435605924988\\
0.86	1.09735298277195\\
0.89	1.10026825169181\\
0.93	1.10408137268612\\
0.96	1.10685391442002\\
0.99	1.10956403039917\\
1.03	1.11308487397477\\
1.06	1.11565943490865\\
1.1	1.11899163583132\\
1.13	1.12142985477574\\
1.16	1.12381324763159\\
1.2	1.12690464032162\\
1.23	1.12916530481732\\
1.26	1.13137298792\\
1.3	1.13424746142007\\
1.34	1.13703309416822\\
1.38	1.13974407402515\\
1.42	1.14237330686031\\
1.46	1.14493409439506\\
1.5	1.147420045104\\
};
\addplot [color=mycolor2, line width=1.5pt, forget plot]
  table[row sep=crcr]{%
0	1\\
0.02	1.00247378438211\\
0.04	1.00489407304089\\
0.0700000000000001	1.00844392261396\\
0.17	1.02016144858843\\
0.21	1.02498012873239\\
0.25	1.02988778088687\\
0.28	1.0336352437759\\
0.31	1.03744480791383\\
0.34	1.04131212920329\\
0.38	1.04654039593687\\
0.44	1.05447251432991\\
0.52	1.06505170191266\\
0.56	1.07028237657205\\
0.6	1.07544587334669\\
0.64	1.0805263088649\\
0.67	1.08427438260453\\
0.7	1.08796431880317\\
0.73	1.09159281158533\\
0.76	1.09515719655019\\
0.79	1.09865537507522\\
0.82	1.10208590586245\\
0.85	1.1054477788886\\
0.88	1.10874031732574\\
0.91	1.11196331088001\\
0.94	1.11511675219896\\
0.97	1.11820091742503\\
1	1.12121632105739\\
1.03	1.12416360180829\\
1.06	1.12704362518593\\
1.09	1.12985729940334\\
1.12	1.13260492045159\\
1.15	1.13528396386158\\
1.18	1.13789145466279\\
1.21	1.14042458730751\\
1.24	1.14288176593516\\
1.27	1.14526412968955\\
1.3	1.14757363523272\\
1.33	1.14981220124273\\
1.36	1.15198169263159\\
1.39	1.15408425210172\\
1.42	1.15612202158295\\
1.45	1.15809680665557\\
1.48	1.16001065793525\\
1.5	1.16125375533103\\
};
\addplot [color=mycolor3, line width=1.5pt,dotted, forget plot]
  table[row sep=crcr]{%
0	1\\
0.02	1.0024725762499\\
0.04	1.00489246002473\\
0.0700000000000001	1.00843796676839\\
0.1	1.01189011636541\\
0.12	1.01413629488876\\
0.15	1.01742342487354\\
0.22	1.02498704991692\\
0.24	1.02721208562845\\
0.26	1.02949251876659\\
0.28	1.03183286853016\\
0.3	1.03423235497411\\
0.32	1.03668677997932\\
0.35	1.04045776203308\\
0.38	1.04431456993543\\
0.42	1.04954881408041\\
0.48	1.05749291482416\\
0.54	1.06542415660128\\
0.58	1.07065269261661\\
0.62	1.07580795117199\\
0.65	1.07961650034814\\
0.68	1.08336950287019\\
0.71	1.08706309463012\\
0.74	1.09069434899768\\
0.77	1.09426121820511\\
0.8	1.09776225977328\\
0.83	1.10119675536543\\
0.86	1.10456441361183\\
0.89	1.10786531788018\\
0.92	1.11109996405643\\
0.95	1.11426906103415\\
0.98	1.11737355426969\\
1.01	1.12041460523577\\
1.04	1.123393460126\\
1.07	1.12631152849322\\
1.1	1.12917028471173\\
1.13	1.13197127584395\\
1.16	1.13471403588113\\
1.19	1.13738618848127\\
1.22	1.13996788396854\\
1.25	1.14245713657005\\
1.28	1.14485906925328\\
1.31	1.14717919570227\\
1.34	1.14942227756772\\
1.37	1.15159289408509\\
1.4	1.15369501288432\\
1.43	1.15573211402095\\
1.46	1.15770767653244\\
1.49	1.15962458262588\\
1.5	1.1602510454551\\
};
\end{axis}

\begin{axis}[%
width=1.227\fwidth,
height=0.675\fwidth,
at={(-0.16\fwidth,-0.074\fwidth)},
scale only axis,
xmin=0,
xmax=1,
ymin=0,
ymax=1,
axis line style={draw=none},
ticks=none,
axis x line*=bottom,
axis y line*=left
]
\end{axis}
\end{tikzpicture}%
  \caption{Evolution of $\L1$-norm over time for the different source
    terms: Lane 1 (left) and Lane 2 (right). The dashed blue line
    represent the source term in~\eqref{eq:source}
    with~\eqref{eq:local_source}, the continuous orange line refers
    to~\eqref{eq:forward} and the dotted black line
    to~\eqref{eq:backfor}.}
  \label{fig:N2_L1norm}
\end{figure}

Finally, we consider the nonlocal models with both~\eqref{eq:forward}
and~\eqref{eq:backfor} and let $\nds$ tend to zero. In particular, we
choose the constant kernel, in the form $w_\nds(x)=1/\nds$
for~\eqref{eq:forward} and in the form $w_\nds(x)=1/(2\nds)$
for~\eqref{eq:backfor}, with
$\nds \in \{0.64,\ 0.32,\ 0.16,\ 0.08,\ 0.04,\ 0.02\}$. Note that for
simplicity, $\nds$ is the same for both models, even though one time
the non local range is in the interval $[x,x+\nds]$, one time in
$[x-\nds,x+\nds]$. Figure~\ref{fig:N2_etatozero} displays the lanes
separately to better appreciate the convergence. The convergence
against the source with~\eqref{eq:local_source} seems to hold for both
models. Moreover, when focusing only on the second lane, also the two
nonlocal models display some differences,
e.g.~compare~\eqref{eq:forward} with $\nds=0.64$ and
\eqref{eq:backfor} with $\nds=0.32$ (parameters are chosen so that the interaction ranges have the same width in both models).

\setlength\fwidth{0.9\textwidth}
\begin{figure}[htb]
  \centering \tikzsetnextfilename{2Lanes_eta_to_zero_v4}
  \input{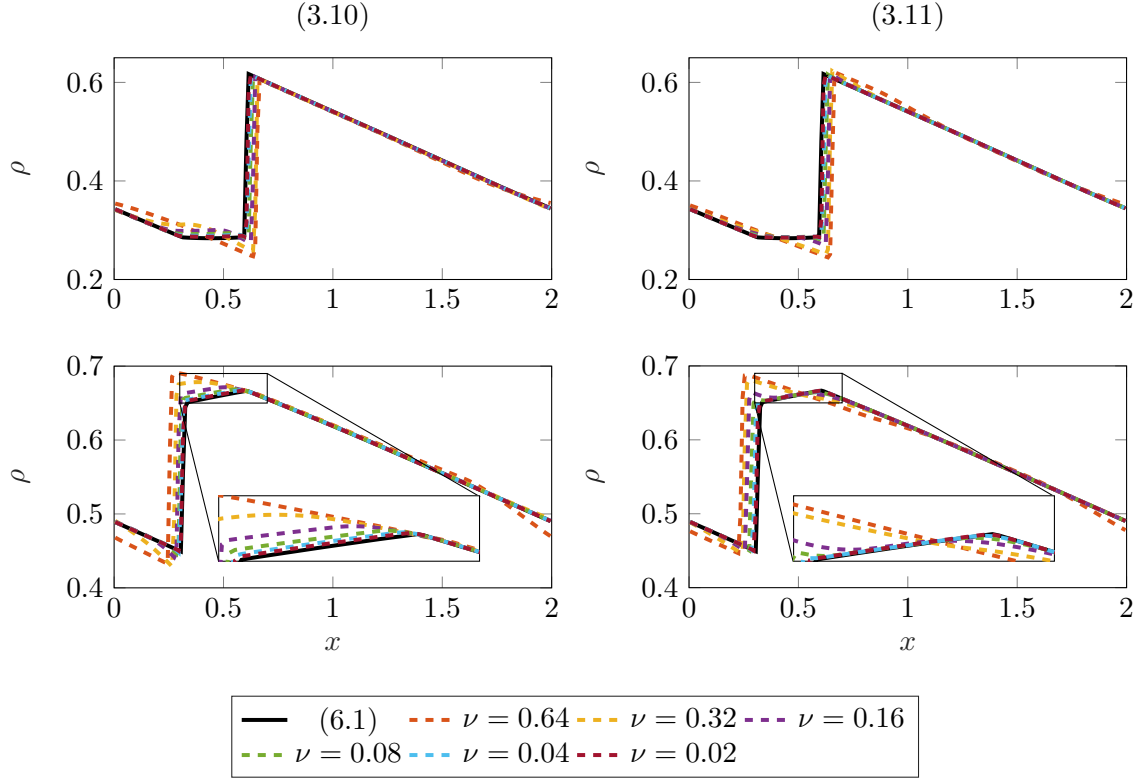}
  \caption{Density profiles for the nonlocal source
    with~\eqref{eq:forward} (left column) and~\eqref{eq:backfor}
    (right column) with different nonlocal ranges $\nds$, for lane 1
    (top row) and lane 2 (bottom row). For lane 2 the zooms are into
    the spatial domain $x\in[0.3,0.7]$ and $\rho\in[0.65,0.69]$.}
  \label{fig:N2_etatozero}
\end{figure}

Table~\ref{tab:L1errors} presents the $\L1$-errors between the local
source term~\eqref{eq:local_source} and both the nonlocal source
terms: the data support the convergence against the local solution as
$\nds \to 0$. Interestingly, the source term with back- and forward
looking kernel~\eqref{eq:backfor} has smaller error terms on the first
lane and only slightly larger errors on the second lane, even tough
the nonlocal range is twice that of the forward looking
kernel~\eqref{eq:forward}.

\begin{table}[h!]
  \centering
  \begin{tabular}{c|c c | c c}
    Source term&\multicolumn{2}{c|}{\eqref{eq:forward}} &\multicolumn{2}{c}{\eqref{eq:backfor}}\\
    $\nds$ &  Lane 1 & Lane 2 & Lane 1 & Lane 2\\
    \hline
    0.64 &  0.0311   &    0.0313     &  0.0330   &    0.0310\\
    0.32 &  0.0239   &    0.0167     &  0.0208   &    0.0198\\
    0.16 &  0.0159   &    0.0089     &  0.0131   &    0.0120\\
    0.08 &  0.0095   &    0.0049     &  0.0078   &    0.0066\\
    0.04 &  0.0054   &    0.0026     &  0.0045   &    0.0035\\
    0.02 &  0.0030   &    0.0013    &   0.0023  &    0.0016
  \end{tabular}
  \caption{$\L1$-errors computed between the solutions with local
    source~\eqref{eq:local_source} and nonlocal source
    terms~\eqref{eq:forward} or~\eqref{eq:backfor}.}
  \label{tab:L1errors}
\end{table}

\subsection{Nonlocal flux and nonlocal source: results for model
  \texorpdfstring{\eqref{eq:nonlocaltransport}}{}}

In the following we consider model~\eqref{eq:nonlocaltransport}
including a nonlocality in the source with parameter $\nds$ and a
nonlocality in the flux with parameter $\ndt$. We focus on the
following two lanes example, inspired by~\cite{BayenKeimer-preprint}:
The velocity function is the same on both lanes
and given by
\begin{equation}
  \label{eq:23}
  v_1(\rho)=v_2(\rho)=1-\rho^2,
\end{equation}
and the initial condition is given by
\begin{equation}
  \label{eq:24}
  \rho_{1,o}(x)=q\left(2\,x-\frac{1}{2}\right)
  \quad\mbox{ and }\quad
  \rho_{2,o}(x)=
  q(x)
\end{equation}
with
\[q(x)=4\, x^2(1-x)^2\,\caratt{(0,1)} (x),\] $\caratt{A}$ being the
characteristic function of the set $A$.

Model~\eqref{eq:nonlocaltransport} fits into the model framework
proposed in~\cite{BayenKeimer-preprint}, if we consider $\nds=\ndt$,
the same kernel functions for the source and the flux and a forward
looking nonlocal term as in~\eqref{eq:forward}. Therefore we consider,
if not stated otherwise, the parameters $\ndt=\nds=0.5$ and the
kernels
\begin{align}
  \label{eq:25}
  w_\ndt(x)=\ & 2\, \frac{\ndt-x}{\ndt^2},
  &
    w_\nds(x)= \ & 2\, \frac{\nds-x}{\nds^2}.
\end{align}
Figure~\ref{fig:Keimer_flux} compares models~\eqref{eq:model}
and~\eqref{eq:nonlocaltransport} and clearly shows the impact of the
nonlocal flux. For both models the same nonlocal
term~\eqref{eq:forward} is used. Because of the nonlocal transport,
the solutions display completely different dynamics, mainly due to the
high nonlocal range.  Indeed, the density does not decrease at the
front part of its support on each lane since the vehicles just behind
the leading ones anticipate the free space ahead, so that the average
density is lower than in the local case.

\setlength\fwidth{0.85\textwidth}
\begin{figure}[h!]
  \centering \tikzsetnextfilename{Keimer_different_flux_05}
  \input{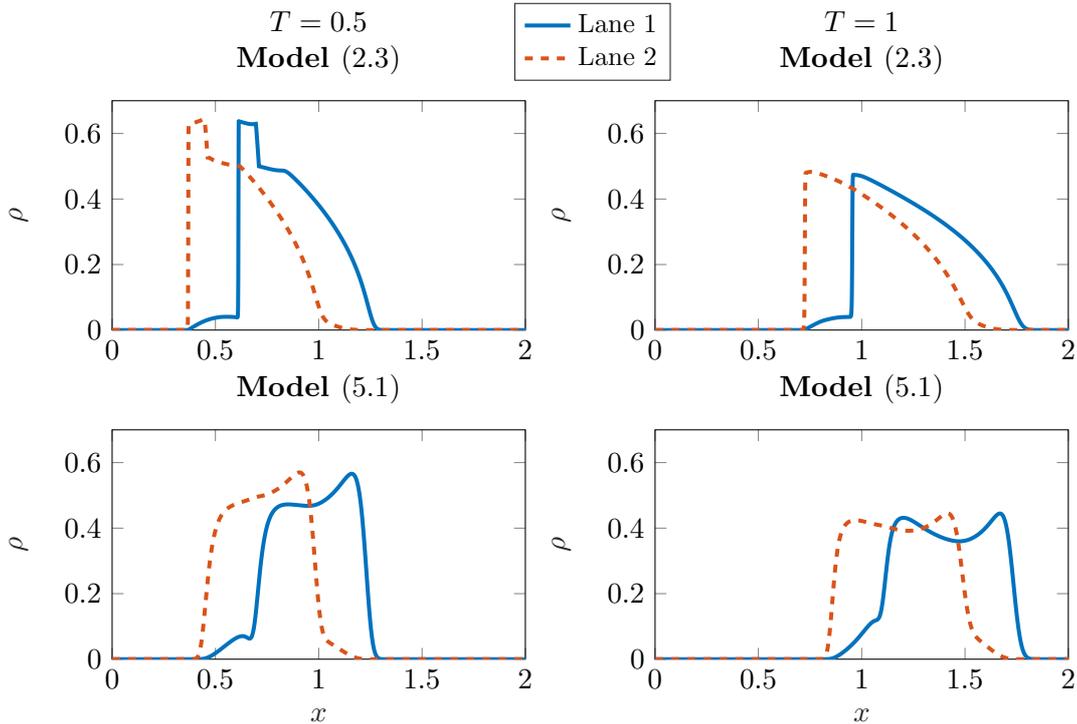}
  \caption{Density profiles at $T=0.5$ (left) and $T=1$ (right) for
    the local flux model~\eqref{eq:model} (top row) and the nonlocal
    flux model~\eqref{eq:nonlocaltransport} (bottom row), both with
    the nonlocal source term using~\eqref{eq:forward}
    with~\eqref{eq:25}. Velocity functions as in~\eqref{eq:23} and
    initial datum as in~\eqref{eq:24}.}
  \label{fig:Keimer_flux}
\end{figure}

As already mentioned, the model introduced
in~\cite{BayenKeimer-preprint} has the same nonlocal term, i.e.~the
same kernel and nonlocal range, both in the source and in the
flux. On the other hand, the model~\eqref{eq:nonlocaltransport}
presented in this paper has more flexibility since it is able to deal
with different types of nonlocality in the flux and in the source, the
latter being independent of the forward nonlocal term.  Therefore, we
now focus on varying the nonlocality in the source term.

First of all, we observe that the nonlocal range in the flux and in
the source term do not necessarily have to be equal: If a driver wants
to overtake a car and thus starts to accelerate, getting ready to
change lane, he/she might look further ahead when performing a lane
change than if he/she keeps on driving in the same lane. In
Figure~\ref{fig:Keimer_eta}, we display the solutions
to~\eqref{eq:nonlocaltransport} with initial datum~\eqref{eq:24},
velocities~\eqref{eq:23}, kernels~\eqref{eq:25}, $\ndt=0.5$ and
varying the nonlocal range in the source, thus varying the parameter
$\nds$. Due to the initial condition, the main influence of the
different parameters $\nds$ in the source term can be seen at the back
of the support of the density in lane 1: the smaller the range $\nds$,
the smaller the average density (and thus the larger the velocity on
lane 1), the more vehicles move from lane 2 to lane 1.  An analogous
situation happens at the front of the support of the density in lane
2.
To sum up, the greater the nonlocal range $\nds$, the less the effect
of the source term: When $\nds$ is large, cars get a better awareness
of the actual free space ahead so that lane changing may be evaluated
as not necessary.

\setlength\fwidth{0.85\textwidth}
\begin{figure}[h!]
  \centering \tikzsetnextfilename{Keimer_different_eta_05}
  \input{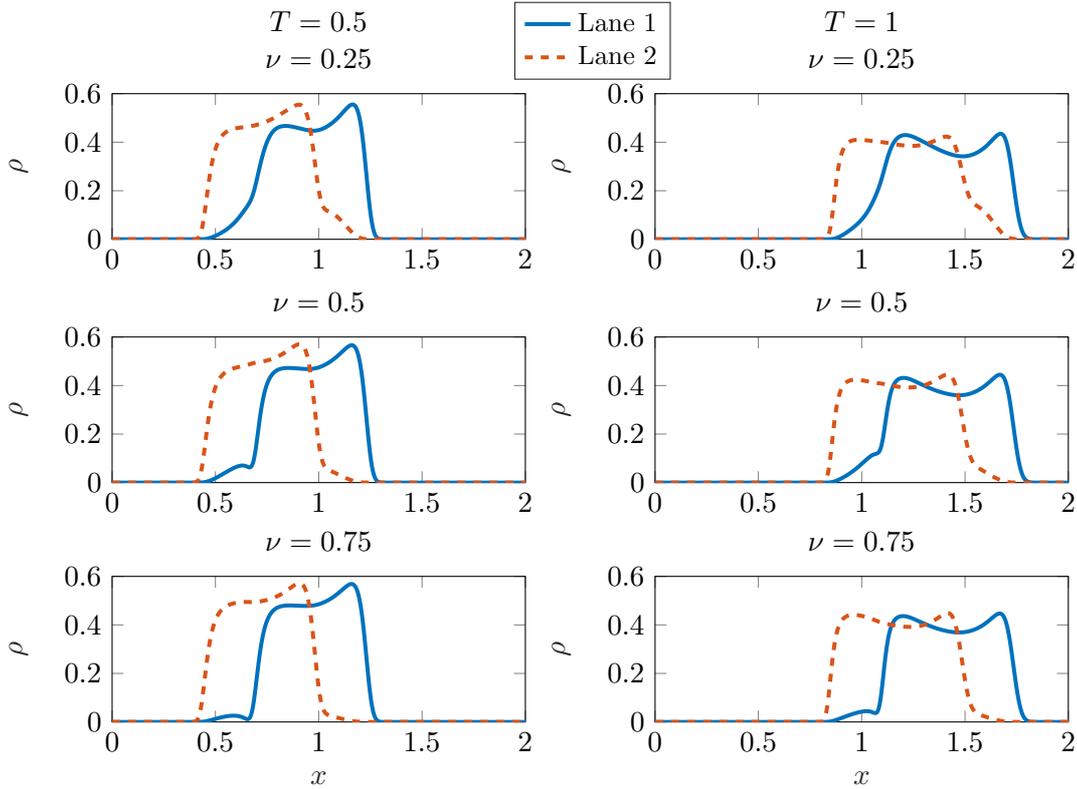}
  \caption{Density profiles at $T=0.5$ (left) and $T=1$ (right) for
    model~\eqref{eq:nonlocaltransport} with forward looking
    kernel~\eqref{eq:forward} with~\eqref{eq:25}, $\ndt=0.5$ and
    $\nds = 0.25 , \, 0.5 , \, 0.75$. Velocity functions as
    in~\eqref{eq:23} and initial datum as in~\eqref{eq:24}.}
  \label{fig:Keimer_eta}
\end{figure}

The second reasonable aspect to keep in mind when performing a lane
change is to take into account also the backward traffic, both in the
present lane and in the target lane. This can be done by considering
model~\eqref{eq:nonlocaltransport} with back- and forward looking
kernel~\eqref{eq:backfor} in the source term. For this example, we
consider a linear symmetric kernel, i.e.
\begin{equation}
  \label{eq:26}
  w_\nds(x)=\frac{\nds-|x|}{\nds^2}.
\end{equation}
Figure~\ref{fig:Keimer_sources} considers the solutions to
model~\eqref{eq:nonlocaltransport} with velocities~\eqref{eq:23},
initial datum~\eqref{eq:24}, $\ndt=0.5$, $w_\ndt$ as in~\eqref{eq:25}
and the following choices of $w_\nds$ and $\nds$:
\begin{enumerate}[label=(\alph*)]
\item\label{item:1} the back- and forward looking
  kernel~\eqref{eq:backfor}--\eqref{eq:26} with $\nds=0.25$, to have
  the same nonlocal influence as in the flux;

\item\label{item:2} the back- and forward looking
  kernel~\eqref{eq:backfor}--\eqref{eq:26} with $\nds=0.5$, to have the
  same look ahead parameter as in the flux;

\item\label{item:3} the forward looking
  kernel~\eqref{eq:forward}--\eqref{eq:24} with $\nds=0.5$, exactly as
  in the flux.
\end{enumerate}
In cases~\ref{item:1} and~\ref{item:2} with the nonlocal term of
type~\eqref{eq:backfor}, more mass is transported from lane 1 to lane
2, especially in the front part of the support of the density of lane
2, even though the leading part of lane 1 is aware of the density on
lane 2. In addition, more mass is transported with smaller nonlocal
range due to similar effects as already described above. In
contrast
, more mass seems to be transported from the rear part of lane 2 to
lane 1 when the nonlocal term with forward looking
kernel~\eqref{eq:forward} is used. This may be due to the fact that
for the back- and forward looking kernel~\eqref{eq:backfor} the
nonlocal velocities on both roads depend on free space and density,
but for the forward looking kernel~\eqref{eq:backfor} the velocity of
the second lane does not include some free space and lane changing
becomes favourable.

\setlength\fwidth{0.85\textwidth}
\begin{figure}[h!]
  \centering \tikzsetnextfilename{Keimer_different_sources_05_v2}
  \input{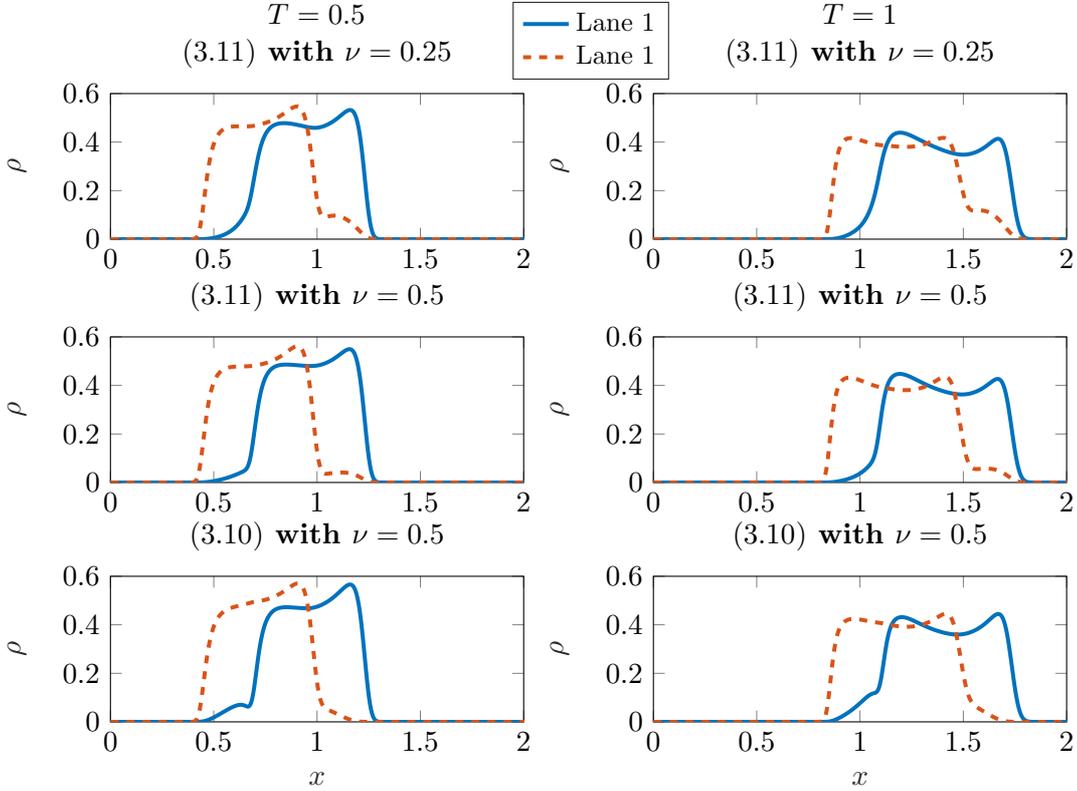}
  \caption{Density profiles for the model~\eqref{eq:nonlocaltransport}
    at $T=0.5$ (left column) and $T=1$ (right column),
    velocities~\eqref{eq:23}, initial datum~\eqref{eq:24}, $\ndt=0.5$,
    $w_\ndt$ as in~\eqref{eq:25}. Concerning $w_\nds$ and the
    parameter $\nds$: first row represents case~\ref{item:1}, second
    row case~\ref{item:2}, third row case~\ref{item:3}.}
  \label{fig:Keimer_sources}
\end{figure}

\section*{Conclusion}
Inspired by the models presented in~\cite{BayenKeimer-preprint} and~\cite{HoldenRisebro}, we have introduced a multilane traffic model
that allows for nonlocality in the source and in the flux term. For
both approaches we have shown existence and uniqueness of
solutions. Based on a Godunov type discretization, we also present a
numerical study comparing the influence of the nonlocality and
different kernels. Future works include the consideration of the
continuum limit for infinitely many lanes and comparisons to real
data.

\section*{Acknowledgment}
S.G.~was supported by the German Research Foundation (DFG) under grant
GO 1920/10-1.\\
E.R.~is a member of INdAM-GNAMPA (Gruppo Nazionale per l'Analisi
Matematica, la Probabilità e le loro Applicazioni) and was partly
supported by the GNAMPA 2020 project "From Wellposedness to Game
Theory in Conservation Laws".

\bigskip

{ \small

  \bibliography{traffic}

  \bibliographystyle{abbrv}

}

\end{document}